\renewcommand*{\eqref}[1]{%
  \hyperref[{#1}]{\textup{\tagform@{\ref*{#1}}}}%
}
\definecolor{kerstin}{RGB}{0,150,75}
\newcommand\centre[4][below]{\node (#3) at #2 [circle,minimum size=0.5em,inner sep=0pt,thin,fill,solid] {}; \node [#1=0.1em] at (#3) {#4};}
\newcounter{cst}
\newcommand{\ctel}[1]{K_{\refstepcounter{cst}\label{#1}\thecst}}
\newcommand{\cter}[1]{K_{\ref*{#1}}} 
\newcommand{\re}{\mathbb{R}}
\newcommand{\na}{\mathbb{N}}
\newcommand{\eps}{\varepsilon}
\newcommand{\sign}{\operatorname{sign}}
\newcommand{\lz}{L^2(\re^d)}
\newcommand{\erwb}{\mathbb{E}\left[}
\newcommand{\erwe}{\right]}
\newcommand{\halbe}{\frac{1}{2}}
\newcommand{\Tau}{\mathcal{T}}
\newcommand{\edges}{\mathcal{E}}
\newcommand{\dkl}{d_{K|L}}
\newcommand{\edgesint}{\mathcal{E}_{\operatorname{int}}}
\newcommand{\edgesext}{\mathcal{E}_{\operatorname{ext}}}
\newcommand{\uhnl}{u_{h,N}^l}
\newcommand{\uhnr}{u_{h,N}^r}
\newcommand{\uhnlm}{u_{h_m,N_m}^l}
\newcommand{\uhnrm}{u_{h_m,N_m}^r}
\newcommand{\net}{{n_1(t)}}
\newcommand{\nnt}{{n_0(t)}}
\newcommand{\lzlambda}{{L^2(\Lambda)}}
\newcommand{\buhnl}{\bar{u}_{h,N}^l}
\newcommand{\vhnl}{v_{h,N}^l}
\newcommand{\vhnr}{v_{h,N}^r}
\newcommand{\vhnlm}{v_{h_m,N_m}^l}
\newcommand{\whnl}{w_{h,N}^l}
\newcommand{\whnr}{w_{h,N}^r}
\newcommand{\erws}[1]{\mathbb{E}'\left[#1\right]}
\newcommand{\erww}[1]{\mathbb{E}\left[{#1}\right]}
\newcommand{\reg}{\operatorname{reg}(\Tau)}
\newtheorem{defi}{Definition}[section]
\newtheorem{lem}[defi]{Lemma}
\newtheorem{teo}[defi]{Theorem}
\newtheorem{prop}[defi]{Proposition}
\theoremstyle{remark}
\newtheorem{remark}[defi]{Remark}
\newtheorem*{notation}{Notation}
\numberwithin{equation}{section}
\title{Convergence of a finite-volume scheme for a heat equation with a multiplicative Lipschitz noise}
\author{Caroline Bauzet\footnotemark[1], \and Flore Nabet\footnotemark[2], \and Kerstin Schmitz\footnotemark[3], \and Aleksandra Zimmermann\footnotemark[3]}
\date{}
\begin{document}
\maketitle

\begin{abstract} We study here the approximation by a finite-volume scheme of a heat equation forced by a Lipschitz continuous multiplicative noise in the sense of It\^o. More precisely, we consider a discretization which is semi-implicit in time and a two-point flux approximation scheme (TPFA) in space. 
We adapt the method based on the theorem of Prokhorov to obtain a convergence in distribution result, then Skorokhod's representation theorem yields the convergence of the scheme towards a martingale solution and the Gy\"{o}ngy-Krylov argument is used to prove convergence in probability of the scheme towards the unique variational solution of our parabolic problem.\\
\quad\\
\noindent\textbf{Keywords:} Stochastic heat equation $\bullet$ Multiplicative Lipschitz noise $\bullet$ Finite-volume method $\bullet$ Stochastic compactness method $\bullet$ Variational approach $\bullet$ Convergence analysis.\\
\quad\\
\textbf{Mathematics Subject Classification (2020):} 60H15 $\bullet$ 35K05 $\bullet$ 65M08.
\end{abstract}
\footnotetext[1]{Aix Marseille Univ, CNRS, Centrale Marseille, LMA UMR 7031, Marseille, France, caroline.bauzet@univ-amu.fr, $+$33 4 84 52 56 31 }
\footnotetext[2]{CMAP, CNRS, Ecole Polytechnique, Institut Polytechnique de Paris, 91128 Palaiseau, France, flore.nabet@polytechnique.edu, $+$33 1 69 33 45 65}
\footnotetext[3]{Universit\"at Duisburg-Essen, Fakult\"at f\"ur Mathematik, Essen, Germany, kerstin.schmitz@uni-due.de, $+$49 201 183 6130, aleksandra.zimmermann@uni-due.de, $+$49 201 183 7292}

\section{Introduction}

Let $\Lambda\subset\re^2$ be a bounded, open, connected and  polygonal set.
Moreover let $(\Omega,\mathcal{A},\mathds{P})$ be a probability space endowed with a right-continuous, complete filtration $(\mathcal{F}_t)_{t\geq 0}$ and let $(W(t))_{t\geq 0}$ be a standard one-dimensional Brownian motion with respect to $(\mathcal{F}_t)_{t\geq 0}$ on $(\Omega,\mathcal{A},\mathds{P})$.\\
For $T>0$, we consider a nonlinear stochastic heat equation under Neumann boundary conditions:
\begin{align}\label{equation}
\begin{aligned}
du-\Delta u\,dt &=g(u)\,dW(t), &&\text{ in }\Omega\times(0,T)\times\Lambda;\\
u(0,\cdot)&=u_0, &&\text{ in } \Omega\times\Lambda;\\
\nabla u\cdot \mathbf{n}&=0, &&\text{ on }\Omega\times(0,T)\times\partial\Lambda;
\end{aligned}
\end{align}
where $\mathbf{n}$ denotes the unit normal vector to $\partial\Lambda$ outward to $\Lambda$.
We assume the following hypotheses on the data:
\begin{itemize}
\item[$H_1$:] $u_0\in L^2(\Omega;H^1(\Lambda))$ is $\mathcal{F}_0$-measurable.
\item[$H_2$:] $g:\re\rightarrow\re$ is a Lipschitz continuous function with Lipschitz constant $L\geq 0$.
\end{itemize}
\begin{remark}
$H_2$ implies 
\begin{equation}\label{H3}
|g(r)|^2\leq C_L(1+|r|^2)
\end{equation}
for all $r\in\mathbb{R}$ and a constant $C_L\geq 0$ only depending on the Lipschitz constant $L\geq 0$ of $g$ and on $g(0)$. In particular, our scheme applies for square integrable, additive noise with appropriate measurability assumptions.
\end{remark}

\subsection{Concept of solution and main result}
The theoretical framework associated with Problem~\eqref{equation} is well established in the literature. Indeed, we can find many existence and uniqueness results for various concepts of solutions associated with this problem such as mild solutions, variational solutions, pathwise solutions and weak solutions, see e.g. \cite{DPZ14} and \cite{LR}. In the present paper we will be interested in the concept of solution as defined below, which we will call a variational solution: 
\begin{defi}\label{solution}
A variational solution to Problem \eqref{equation} is an $(\mathcal{F}_t)_{t\geq0}$-adapted stochastic process
\begin{align*}
u\in L^2(\Omega;C([0,T];L^2(\Lambda)))\cap L^2(\Omega;L^2(0,T;H^1(\Lambda)))
\end{align*}
such that for all $t\in[0,T]$,
\begin{align*}
u(t)-u_0-\int_0^t \Delta u(s)\,ds=\int_0^t g(u(s))\,dW(s)
\end{align*}
in $L^2(\Lambda)$ and  a.s. in $\Omega$.
\end{defi}
Existence, uniqueness and regularity of this variational solution is well-known in the literature, see e.g. \cite{PardouxThese},\cite{KryRoz81},\cite{LR}.
The main result of this paper is to propose a finite-volume scheme for the approximation of such a variational solution and to show its stochastically strong convergence by passing to the limit with respect to the time and space discretization parameters.  This is stated in the following convergence result: 
\begin{teo} \label{mainresult}
Assume that hypotheses $H_1$ and $H_2$ hold. 
Let $(\Tau_m)_{m\in \mathbb{N}}$ be a sequence of admissible finite-volume meshes of  $\Lambda$ in the sense of Definition \ref{defmesh} such that the mesh size $h_m$ tends to $0$ and $(N_m)_{m\in \mathbb{N}}\subseteq \mathbb{N}^*$ a sequence of positive numbers which tends to infinity.
For a fixed $m\in\mathbb{N}$, let $u^r_{h_m,N_m}$ and $u^l_{h_m,N_m}$ be respectively the right and left in time finite-volume approximations defined by \eqref{eq:notation_wh}, \eqref{eq:def_u0}-\eqref{equationapprox} with $\Tau =\Tau_m$ and $N=N_m$. Then $(u^r_{h_m,N_m})_{m\in \mathbb{N}}$ and $(u^l_{h_m,N_m})_{m\in \mathbb{N}}$ converge in $L^p(\Omega;L^2(0,T;L^2(\Lambda)))$ for any $p\in[1,2)$ to the variational solution of Problem \eqref{equation} in the sense of Definition~\ref{solution}.
\end{teo}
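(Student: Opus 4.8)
The plan is to follow the stochastic compactness strategy announced in the abstract, in four stages: uniform a priori estimates, tightness together with Prokhorov's theorem, Skorokhod representation with identification of a martingale solution, and finally the Gy\"ongy--Krylov argument combined with pathwise uniqueness to upgrade to convergence in probability and then in $L^p$.

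First I would collect the uniform-in-$m$ a priori bounds for the discrete approximations $u^r_{h_m,N_m}$ and $u^l_{h_m,N_m}$, obtained by testing the scheme against the solution itself and applying the discrete It\^o formula together with the Burkholder--Davis--Gundy inequality and the growth bound \eqref{H3}. The relevant estimates are control of $\mathbb{E}\big[\sup_{t\in[0,T]}\|u^r_{h_m,N_m}(t)\|_{L^2(\Lambda)}^{2}\big]$, of the discrete energy $\mathbb{E}\big[\int_0^T \|\nabla_h u^r_{h_m,N_m}\|^2\big]$ coming from the TPFA fluxes, and ideally of higher moments $\mathbb{E}\big[\sup_t\|u^r_{h_m,N_m}\|_{L^2(\Lambda)}^{2p}\big]$, which will supply the uniform integrability needed at the end. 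I would also record $\|u^r_{h_m,N_m}-u^l_{h_m,N_m}\|\to 0$ in the relevant norm, so that both families share the same limit.

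Next, combining the discrete gradient bound (yielding spatial compactness via a discrete Aubin--Lions / Kolmogorov-type argument for finite-volume functions) with the time regularity inherited from the drift and the stochastic integral, I would prove tightness of the laws of the pairs $(u^r_{h_m,N_m},W)$ on a path space such as $L^2(0,T;L^2(\Lambda))$ with its strong topology, times $C([0,T];\mathbb{R})$. Prokhorov's theorem then extracts a weakly convergent subsequence, and Skorokhod's representation theorem provides, on a new probability space, a.s.-convergent copies $(\tilde u_m,\tilde W_m)\to(\tilde u,\tilde W)$ with unchanged laws. The identification step is where the real work lies: I would pass to the limit in the scheme on the new space, showing that the reconstructed TPFA gradients converge weakly to $\nabla\tilde u$ (using mesh admissibility and the consistency of the two-point flux), that $g(\tilde u_m)\to g(\tilde u)$ by the Lipschitz continuity $H_2$, and that the discrete stochastic integral $\int_0^\cdot g(\tilde u_m)\,d\tilde W_m$ converges to $\int_0^\cdot g(\tilde u)\,d\tilde W$; this last convergence, together with the construction of the limiting filtration and the verification that $\tilde W$ is a Brownian motion for it, shows that $\tilde u$ is a martingale solution of \eqref{equation}.

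The main obstacle I anticipate is precisely this martingale identification: passing to the limit in a stochastic integral whose integrand and integrator both vary with $m$ requires care, typically via the characterization of the limit as a square-integrable martingale with the correct quadratic variation $\int_0^t g(\tilde u)^2\,ds$, established by testing against bounded continuous functionals of the path and using the a.s. convergence from Skorokhod together with the uniform integrability from the higher-moment estimates. Once a martingale solution is identified, I would invoke the pathwise uniqueness of variational solutions (cited in the excerpt) inside the Gy\"ongy--Krylov lemma: applying the tightness and Skorokhod machinery to joint laws of two subsequences of $(u^r_{h_m,N_m})$ driven by the \emph{same} $W$, uniqueness forces the two Skorokhod limits to coincide almost surely, so every such joint limit is supported on the diagonal. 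Gy\"ongy--Krylov then yields convergence in probability of $(u^r_{h_m,N_m})$ on the original probability space to the unique variational solution, and likewise for $(u^l_{h_m,N_m})$. Finally, the higher-moment a priori bounds give equi-integrability of $\|u^r_{h_m,N_m}\|_{L^2(0,T;L^2(\Lambda))}^p$, which upgrades convergence in probability to convergence in $L^p(\Omega;L^2(0,T;L^2(\Lambda)))$ for every $p\in[1,2)$, completing the proof.
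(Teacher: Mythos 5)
Your proposal follows essentially the same route as the paper: uniform second-moment and discrete-gradient estimates, tightness of the laws on $L^2(0,T;L^2(\Lambda))$ (the paper obtains this via uniform bounds in $W^{\alpha,2}(0,T;L^2(\Lambda))\cap L^2(0,T;W^{\alpha,2}(\Lambda))$ and the Flandoli--Gatarek compact embedding, which is the fractional-Sobolev incarnation of your Aubin--Lions/Kolmogorov step), Prokhorov and Skorokhod, identification of the limit as a martingale solution, and then pathwise uniqueness plus the Gy\"ongy--Krylov lemma followed by Vitali's theorem. The only cosmetic difference is that no higher moments are needed: since $p<2$, the uniform $L^2(\Omega;L^2(0,T;L^2(\Lambda)))$ bound already gives the equi-integrability required to upgrade convergence in probability to convergence in $L^p$.
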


\subsection{State of the art}
The study of numerical schemes for stochastic partial differential equations (SPDEs) has attracted a lot of attention in the last decades and there exists an extensive literature on this topic.
A list of references for the numerical analysis of SPDEs and an overview of the state of the art is given in \cite{DP09}, \cite{ACQS20} and \cite{OPW20}.\\ 
Concerning the theoretical and numerical study of stochastic heat equations, semigroup techniques may be used to construct mild solutions (see, e.g., \cite{DPZ14}). However, from the point of view of applications and mathematical modeling, it is often interesting to consider first-order perturbations of the stochastic heat equation and more complicated, nonlinear second order operators, such as the $p$-Laplacian or the porous medium operator. For these nonlinear SPDEs, the semigroup approach is not available and variational techniques have been developed in \cite{PardouxThese}, \cite{KryRoz81} and \cite{LR}.\\
In the numerical analysis of variational solutions to parabolic SPDEs, spatial discretizations of finite-element type have been frequently used (see, e.g., \cite{Prohl}, \cite{BHL21} and the references therein). On the other hand, for stochastic scalar conservation laws, finite-volume schemes have been studied in \cite{BCG161}, \cite{BCG162},  \cite{BCG17}, \cite{FGH18} , \cite{M18}, \cite{DV18}, \cite{BCC20} and \cite{DV20}.
To the best of our knowledge, there are only a few results on finite-volume schemes for parabolic SPDEs. Let us mention the work of  \cite{BN20} where the authors proposed a convergence result of a finite-volume scheme for the approximation of  a stochastic heat equation with linear multiplicative noise.

\subsection{Aim of the study}
In this contribution, we want to extend the finite-volume approximation results in the hyperbolic case to the stochastic heat equation with Lipschitz continuous multiplicative noise. Having applications to nonlinear operators and also to degenerate parabolic-hyperbolic problems with stochastic force in mind for the future, we propose a method for the convergence of the scheme which does not rely on mild solutions and results from semigroup theory.
Additionally, we may include a discrete gradient in the right-hand side of our scheme \eqref{equationapprox} in the future.  Hence, further studies may be devoted to the convergence analysis of finite-volume schemes for equations with multiplicative noise involving first order spatial derivatives of the solution.
The main technical challenge is the nonlinear multiplicative noise. Indeed, from the \textit{a priori} estimates, we get up to subsequences weak convergence results in several functional spaces for  our finite-volume approximations and this mode of convergence is not enough to identify the weak limit of the nonlinear term in the stochastic integral.
Therefore, we first show the convergence towards a martingale solution by adapting the stochastic compactness method based on Skorokhod's representation theorem. Then, using a famous argument of pathwise uniqueness (see, e.g., \cite{Krylov}), we obtain the stochastically strong convergence result stated in Theorem \ref{mainresult}.
In this contribution, we limit ourselfes to the convergence proof. The study of convergence rates is subject to current research activities and will be detailed in a forthcoming work.

\subsection{Outline}
The paper is organized as follows. The next section concerns the introduction of the finite-volume framework: the definition of an admissible finite-volume mesh on $\Lambda$ will be stated and the associated notations of discrete unknowns will be given. Then the notions of discrete gradient and discrete $H^1$-seminorm will be introduced. In a last subsection, we will introduce our finite-volume scheme together with the associated finite-volume approximations. The remainder of the paper is then devoted to the proof of the convergence of this approximations towards the variational solution of \eqref{equation}. To do so, we will prove in Section \ref{estimates} several stability estimates satisfied by these approximations, but also a boundedness result on the approximation of the stochastic integral. These estimates will allow us to pass the limit in the numerical scheme in Section \ref{ConvFVscheme}. More precisely, we apply the classical stochastic compactness argument (see, e.g., \cite{BreitFeireisl}). By the theorem of Prokhorov, we will get convergence in law (up to subsequences) of our finite-volume approximations. At the cost of a change of probability space, the  Skorokhod representation theorem will allow us to obtain almost sure convergence of the proposed finite-volume scheme. Then, a martingale identification argument will help us in order to recover at the limit the desired stochastic integral.
In this way, we have shown that our finite-volume scheme converges to a martingale solution of \eqref{equation}, i.e., the stochastic basis is not fixed, but enters an unknown in the equation. Next, we show pathwise uniqueness of solutions to \eqref{equation}. This, together with a classical argument of Gy\"{o}ngy and Krylov (see \cite{Krylov}) allows us to deduce convergence in probability of the scheme with respect to the initial stochastic basis.

\section{The finite-volume framework}\label{sectiontwo}

\subsection{Admissible finite-volume meshes and notations}\label{mesh}
In order to perform a finite-volume approximation of the variational solution of Problem \eqref{equation} on $[0,T]\times \Lambda$ we need first of all to set a choice for the temporal and spatial discretization. For the time discretization, let $N\in \mathbb{N}^*$ be given. We define the  fixed time step $\Delta t=\frac{T}{N}$ and  divide the interval $[0,T]$ in $0=t_0<t_1<....<t_N=T$ equidistantly with $t_n=n\Delta t$ for all $n\in \{0, ..., N-1\}$.
For the space discretization, we refer to \cite{gal} and consider finite-volume admissible meshes in the sense of the following definition.
\begin{defi}\label{defmesh} (Admissible finite-volume mesh) An admissible finite-volume mesh $\mathcal{T}$ of $\Lambda$ (see Fig.~\ref{fig:notation_mesh}) is given by a family of open polygonal and convex subsets $K$, called \textit{control volumes} of $\Tau$, satisfying the following properties:
\begin{itemize}
\item $\overline{\Lambda}=\bigcup_{K\in\Tau}\overline{K}$.
\item If $K,L\in\Tau$ with  $K\neq L$ then $\operatorname{int}K\cap\operatorname{int}L=\emptyset$.
\item If $K,L\in\Tau$, with $K\neq L$ then either the $1$-dimensional  Lebesgue measure of $\overline{K}\cap \overline{L}$ is $0$ or $\overline{K}\cap \overline{L}$ is the edge of the mesh denoted $\sigma=K|L$  separating the control volumes $K$ and $L$.
\item To each control volume $K\in\Tau$, we associate a point $x_K\in \overline{K}$ (called the center of $K$) such that: If $K,L\in\Tau$ are two neighbouring control volumes the straight line between the centers $x_K$ and $x_L$ is orthogonal to the edge $\sigma=K|L$.
\end{itemize}
\end{defi}

\begin{figure}[htbp!]
\centering
\begin{tikzpicture}[scale=2]

  \clip (-1.2,-0.6) rectangle (1.8,1.3);

  \node[rectangle,fill] (A) at (-1,0.6) {};
  \node[rectangle,fill] (B) at (0,1.2) {};
  \node[rectangle,fill] (C) at (0,-0.2) {};
  \node[rectangle,fill] (D) at (1.5,0.3) {};

  \centre[above right]{(-0.6,0.5)}{xK}{$x_K$};
  \centre[above left]{(0.9,0.5)}{xL}{$x_L$};
  
  \draw[thick] (B)--(C) node [pos=0.7,right] {$\sigma=$\small{$K|L$}};

  \draw[thin,opacity=0.5] (A) -- (B) -- (C) -- (A) ;
  \draw[thin,opacity=0.5] (D) -- (B) -- (C) -- (D);

  \draw[dashed] (xK) -- (xL);
  
  \coordinate (KK) at ($(xK)!0.65!-90:(xL)$);
  \coordinate (LL) at ($(xL)!0.65!90:(xK)$);

  \draw[dotted,thin] (xK) -- (KK);
  \draw[dotted,thin] (xL) -- (LL);

  \draw[|<->|] (KK) -- (LL) node [midway,fill=white,sloped] {$\dkl$};
 
  \coordinate (KAB) at ($(A)!(xK)!(B)$);
  \coordinate (KAC) at ($(A)!(xK)!(C)$);

  \coordinate (LDB) at ($(D)!(xL)!(B)$);
  \coordinate (LDC) at ($(D)!(xL)!(C)$);

  \draw[dashed] (xK) -- ($(xK)!3!(KAB)$);
  \draw[dashed] (xK) -- ($(xK)!3!(KAC)$);

  \draw[dashed] (xL) -- ($(xL)!3!(LDB)$);
  \draw[dashed] (xL) -- ($(xL)!3!(LDC)$);

  \begin{scope}[on background layer]   
    \draw (0,0.5) rectangle ++ (0.1,-0.1);
  \end{scope}
  
  \coordinate (nkl) at ($(B)!0.3!(C)$);
  \draw[->,>=latex] (nkl) -- ($(nkl)!0.3cm!90:(C)$) node[above] {$\mathbf{n}_{K|L}$};

\end{tikzpicture}
\caption{Notations of the mesh $\mathcal T$ associated with $\Lambda$\label{fig:notation_mesh}}
\end{figure}

Once an admissible finite-volume mesh $\Tau$ of $\Lambda$ is fixed, we will use the following notations.\\
\quad\\
\textbf{Notations.}
\begin{itemize}
\item $h=\operatorname{size}(\Tau)=\sup\{\operatorname{diam}(K): K\in\Tau\}$, the mesh size.

\item $d_h\in\mathbb{N}$ the number of control volumes $K\in\Tau$ with $h=\operatorname{size}(\Tau)$.

\item $\lambda_2$ denotes the two-dimensional Lebesgue measure.

\item $\mathcal{E}$ is the set of the edges of the mesh $\Tau$ and we define $\mathcal{E}_{\operatorname{int}}:=\{\sigma\in\mathcal{E}:\sigma\nsubseteq \partial\Lambda\}$, $\mathcal{E}_{\operatorname{ext}}:=\{\sigma\in\mathcal{E}:\sigma\subseteq \partial\Lambda\}$.

\item For $K\in\Tau$, $\mathcal{E}_K$ is the set of edges of $K$ and $m_K:=\lambda_2(K)$.

\item Let $K,L\in\Tau$ be two neighbouring control volumes. For $\sigma=K|L\in\edgesint$, let $m_\sigma$ be the length of $\sigma$ and $d_{K|L}$ the distance between $x_K$ and $x_L$.

\item For neighbouring control volumes $K,L\in \Tau$, we denote the unit vector on the edge $\sigma=K|L$ pointing from $K$ to $L$ by $\mathbf{n}_{KL}$. 

\item For $\sigma=K|L\in\edgesint$, the diamond $D_\sigma$ (see Fig.~\ref{fig:notation_diamond}) is the open quadrangle whose diagonals are the edge $\sigma$ and the segment $[x_K,x_L]$.
For $\sigma\in\edgesext\cap\edges_K$, we define $D_\sigma:=K$. Then, $\overline{\Lambda}=\bigcup_{\sigma\in\edges} \overline{D_\sigma}$. 
\item  $m_{D_\sigma}:=\lambda_2(D_\sigma)$ is the two-dimensional Lebesgue measure of the diamond $D_{\sigma}$ . Note that for $\sigma\in\edgesint$, we have $\displaystyle m_{D_\sigma}=\frac{m_\sigma\dkl}{2}.$

\end{itemize}

\begin{figure}[htbp!]
\centering
\begin{tikzpicture}[scale=1.8]

  \clip (-1.2,-0.6) rectangle (1.8,1.6);

  \coordinate (xK) at (-0.5,0.5);
  \coordinate (xL) at (0.8,0.5);

  \node[rectangle,fill] (A) at (-1,1.5) {};
  \node[rectangle,fill] (B) at (0,1) {};
  \node[rectangle,fill] (C) at (0,0) {};
  \node[rectangle,fill] (D) at (-0.5,-0.3) {};
  
  \node[rectangle,fill] (E) at (1,1.2) {};
  \node[rectangle,fill] (F) at (1.2,-0.5) {};

  \centre[left]{(-0.7,0.5)}{xK}{$x_K$};
  \centre[right]{(1.2,0.5)}{xL}{$x_L$};

  \draw[thin,opacity=0.5] (-10,1.5)--(A) -- (B) -- (C) -- (D) -- (-10,-0.5) ;
  \draw[thin,opacity=0.5] (10,2) -- (E) -- (B) -- (C) -- (F) -- (4,0);
  \draw[dashdotted] (xK) -- (C) -- (xL) -- (B) -- (xK);

  \draw[thick] (B) -- (C);

  \draw[thick,dashed] (xK) -- (xL);

  \draw (0,0.5) node [above right]  {$D_\sigma$};
  \draw (0,0.3) node [left]  {$\sigma$};

  \begin{scope}[on background layer]
    \fill [diamond,fill opacity=0.2] (xK.center) -- (C.center)-- (xL.center) -- (B.center);
    \draw (0,0.5) rectangle ++ (0.1,-0.1);
  \end{scope}

\end{tikzpicture}
\caption{Notations on a diamond cell $D_\sigma$ for $\sigma\in\edgesint$\label{fig:notation_diamond}}
\end{figure}

Using these notations, we introduce a positive number 
\begin{eqnarray}\label{mrp}
\reg=\max\left(\mathcal N,\max_{\scriptscriptstyle K \in\Tau \atop \sigma\in\mathcal{E}_K} \frac{\operatorname{diam}(K)}{d(x_K,\sigma)}\right)
\end{eqnarray}
(where $\mathcal N$ is the maximum of edges incident to any vertex) that measures the regularity of a given mesh and is useful to perform the convergence analysis of finite-volume schemes.
This number should be uniformly bounded when the mesh size tends to $0$ for the convergence results to hold.

\subsection{Discrete unknowns and piecewise constant functions }\label{discretenotation}

From now on and unless otherwise specified, we consider $N\in \mathbb{N}^*$, $\Delta t=\frac{T}{N}$ and $\Tau$ an admissible finite-volume mesh of $\Lambda$ in the sense of Definition \ref{defmesh} with a mesh size $h$. For $n\in\{0, ..., N-1\}$ given, the idea of a finite-volume scheme for the approximation of Problem \eqref{equation} is to associate to each control volume $K\in\Tau$ and time $t_n$ a discrete unknown value denoted $u^n_K\in \mathbb{R}$, expected to be an approximation of $u(t_n,x_K)$, where $u$ is the variational solution of \eqref{equation}. Before presenting the numerical scheme satisfied by the discrete unknowns $\{u^n_K, K\in\Tau, n\in\{0, ..., N-1\}\}$, let us introduce some general notations.  \\

For any arbitrary vector $(w_K^n)_{K\in\Tau}\in\re^{d_h}$ we can define the piecewise constant function $w_h^n:\Lambda\rightarrow \mathbb{R}$ by
\[
w_h^n(x):=\sum_{K\in\Tau} w^n_K \mathds{1}_K(x),\ \forall x\in \Lambda.
\]
Note that since the mesh $\Tau$ is fixed, by the continuous mapping defined from $ \mathbb{R}^{d_h}$ to $L^2(\Lambda)$ by 
\[(w^n_K)_{{K\in\Tau}} 
\mapsto \sum_{K\in\Tau}\mathds{1}_K w^n_K, \]
the space $\re^{d_h}$ can be considered as a finite-dimensional subspace of $L^2(\Lambda)$ and we may naturally identify the function and the vector
\[
w^n_h\equiv(w^n_K)_{K\in\Tau}\in \mathbb{R}^{d_h}.
\]
Then, knowing for all $n \in\{0,\ldots,N\}$ the function $w_h^n$,  we can define the following piecewise constant functions in time and space $\whnr,   \whnl :[0,T]\times \Lambda\rightarrow \mathbb{R}$ by  
\begin{equation}
 \label{eq:notation_wh}
 \begin{aligned}  
  \whnr(t,x):=&\sum_{n=0}^{N-1} w_h^{n+1}(x)\mathds{1}_{[t_n,t_{n+1})}(t)\text{ if }t\in[0,T)
\text{ and } \whnr(T,x):=w_h^N(x),\\
\whnl(t,x):=&\sum_{n=0}^{N-1} w_h^n(x)\mathds{1}_{[t_n,t_{n+1})}(t) \text{ if }t\in(0,T]\text{ and } 
\whnl(0,x):=w_h^0(x).
 \end{aligned}
\end{equation}
\begin{remark}
The superscripts $r$ and $l$ in \eqref{eq:notation_wh} do not refer to the continuity properties of the associated functions (which may be chosen either c\`{a}dl\`{a}g or c\`{a}gl\`{a}d). The difference is that $w_{h,N}^l$ is adapted whereas $w_{h,N}^r$ is not adapted.
\end{remark}
As for the piecewise constant function in space, since $\Tau$ and $N$ are fixed, by the continuous mapping defined from $\mathbb{R}^{d_h\times N}$ to $L^2(0,T;L^2(\Lambda))$ by
\[(w_K^n)_{\substack{K\in\Tau \\ n\in\{0,\ldots,N-1\}}}\mapsto\sum_{\substack{K\in\Tau \\ n\in\{0,\ldots,N-1\}}}\mathds{1}_K\mathds{1}_{[t_n,t_{n+1})}w_K^n,\]
the space $\mathbb{R}^{d_h\times N}$ can be considered as a finite-dimensional subspace of $L^2(0,T;L^2(\Lambda))$ and we may naturally identify
\begin{align*}
\whnl&\equiv(w_K^n)_{\substack{K\in\Tau \\ n\in\{0,\ldots,N-1\}}}\in \mathbb{R}^{d_h\times N},\\
\whnr&\equiv(w_K^{n+1})_{\substack{K\in\Tau \\ n\in\{0,\ldots,N-1\}}}\in \mathbb{R}^{d_h\times N}.
\end{align*}
We can also define the piecewise affine, continuous in time and piecewise constant in space reconstruction $\widehat{w}_{h,N}:[0,T]\times \Lambda\rightarrow\re$ by
\begin{equation}
 \label{eq:notation_whhat}
\widehat{w}_{h,N}(t,x):=\sum_{n=0}^{N-1}\mathds{1}_{[t_n,t_{n+1})}(t)
\left(\frac{w_h^{n+1}(x)-w_h^n(x)}{\Delta t}(t-t_n)+w_h^n(x)\right).
\end{equation}
\begin{remark}Note that in the rest of the paper, when we will consider a time and space function $\alpha:[0,T]\times \Lambda\rightarrow \mathbb{R}$ on all the space $\Lambda$ (respectively the time interval $[0,T]$) at a fixed time $t\in [0,T]$ (respectively at a fixed $x\in \Lambda$) we will omit the space (respectively time) variable in the notations and write $\alpha(t)$  (respectively $\alpha(x)$) instead of $\alpha(t,\cdot)$ (respectively $\alpha(\cdot,x)$).
\end{remark}

\subsection{Discrete norms and discrete gradient}

Fix $n\in\{0, ..., N-1\}$ and consider for the remainder of this subsection an arbitrary vector $(w^n_K)_{K\in\Tau}\in \mathbb{R}^{d_h}$ and use its natural identification with the piecewise constant function in space $w^n_h\equiv(w^n_K)_{K\in\Tau}$.  We  introduce in what follows the notions of discrete gradient and discrete norms for such a  function $w^n_h$.
\begin{defi}[Discrete $L^2$-norm]We define the $L^2$-norm  of $w^n_h \in\re^{d_h}$ as follows
$$||w^n_h||_{L^2(\Lambda)}=\left(\sum_{K\in \Tau}m_K |w^n_K|^2\right)^\frac12.$$
\end{defi}

\begin{defi}[Discrete gradient]
We define the gradient operator $\nabla^h$ that maps scalar fields $w^n_h\in\re^{d_h}$ into vector fields of $(\re^2)^{e_h}$ (where $e_h$ is the number of edges in the mesh $\Tau$), we set $\nabla^h w^n_h =(\nabla_\sigma^h w^n_h)_{\sigma\in\edges}$ with
 \[
  \nabla_\sigma^h w^n_h :=
  \left\{
  \begin{aligned}
   2\frac{w^n_L-w^n_K}{\dkl} \mathbf{n}_{KL}, \quad
   &\text{ if }\sigma=K|L\in\edgesint ; \\
   \qquad 0, \qquad\qquad &\text{ if } \sigma\in\edgesext.
  \end{aligned}
  \right.
 \]
\end{defi}

We remark that $\nabla^h w_n^h$ is considered as a piecewise constant function, which is constant on the diamond $D_\sigma$.

\begin{defi}[Discrete $H^1$-seminorm]
We define the $H^1$-seminorm  of $w^n_h \in\re^{d_h}$ as follows
 \[
  |w^n_h|_{1,h}:=\left(\sum_{\sigma\in\edgesint}\frac{m_\sigma}{\dkl}|w^n_K-w^n_L|^2\right)^\halbe.
 \]

\end{defi}

\begin{notation}
If not marked otherwise, for an edge $\sigma\in\edgesint$ we denote the neighbouring control volumes by $K$ and $L$, i.e., $\sigma=K|L$. In particular we use this notation in sums.
\end{notation}

\begin{remark}\label{remarkforuhnrboundiii} 
Note that in particular, 
\begin{align*}
\|\nabla^h w^n_h\|_{(L^2(\Lambda))^2}^2
=\sum_{\sigma\in\edgesint}m_{D_\sigma}|\nabla_\sigma^h w^n_h|^2
=2\sum_{\sigma\in\edgesint}\frac{m_\sigma}{\dkl}|w^n_K-w^n_L|^2=2|w^n_h|_{1,h}^2
\end{align*}
where the constant $2$ corresponds to the space dimension $d=2$.
\end{remark}

\begin{remark}\label{discrpartint}
If we consider another arbitrary vector $\widetilde w^n_h\equiv(\widetilde{w}^n_K)_{K\in\Tau}\in \mathbb{R}^{d_h}$, by summing over the edges we may rearrange the sum on the left-hand side and get the following rule of "discrete partial integration"
\begin{align}\label{PInt}
\sum_{K\in\Tau}\sum_{\sigma\in\edges_K\cap\edgesint}\frac{m_\sigma}{\dkl}(w^n_K-w^n_L)\widetilde w^n_K
=\sum_{\sigma\in\edgesint}\frac{m_\sigma}{\dkl}(w^n_K-w^n_L)(\widetilde w^n_K-\widetilde w^n_L).
\end{align}
\end{remark}
\quad\\
We have now all the necessary  definitions and notations to present the finite-volume scheme studied in this paper. This is the aim of the next subsection.

\subsection{The finite-volume scheme}

Firstly, we define the vector  $u_h^0\equiv (u^0_K)_{K\in\Tau} \in \re^{d_h}$ by the discretization of the initial condition $u_0$ of Problem \eqref{equation} over each control volume:
\begin{align}
\label{eq:def_u0}
u_K^0:=\frac{1}{m_K}\int_K u_0(x)\,dx, \quad \forall K\in \Tau.
\end{align}
The finite-volume scheme we propose reads, for this given initial $\mathcal{F}_0$-measurable random vector $u_h^0\in\re^{d_h}$: \\
For any $n \in \{0,\cdots,N-1\}$, knowing $u_h^n\equiv (u^{n}_K)_{K\in\Tau} \in \re^{d_h}$ we search for $u_h^{n+1}\equiv(u^{n+1}_K)_{K\in\Tau}\in\re^{d_h}$ such that, for almost every $\omega\in\Omega$, the vector $u_h^{n+1}$ is solution to the following random equations
\begin{equation}
\label{equationapprox}
\frac{m_K}{\Delta t}(u_K^{n+1}-u_K^n)+\sum_{\sigma\in\edgesint\cap\edges_K}\frac{m_\sigma}{\dkl}(u_K^{n+1}-u_L^{n+1})=\frac{m_K}{\Delta t}g(u_K^n)\Delta_{n+1}W,
\quad  \forall K \in \Tau,
\end{equation}
where $\Delta_{n+1}W$ denotes the increments of the Brownian motion between $t_{n+1}$ and $t_n$:
$$\Delta_{n+1}W:=W(t_{n+1})-W(t_n)\text{ for }n\in\{0,\dots,N-1\}.$$ 
\begin{remark}
The second term on the left-hand side of \eqref{equationapprox} is the classical two-point flux approximation of the Laplace operator obtained formally by integrating the Laplace operator on each control volume $K \in \Tau$, then applying the Gauss-Green theorem to the term $\int_K \Delta u(t_{n+1},x)dx$ and finally combining  Taylor expansions of the function $u(t_{n+1},\cdot)$ at the points $x_K$ and $x_L$ together with the orthogonality condition on the mesh (see~\cite[Section 10]{gal} for more details on the two-point flux approximation of the Laplace operator with Neumann boundary conditions).
The time-implicit discretization of the Laplace operator has several analytic advantages: First of all, calculations in the a-priori estimates are simplified. Secondly, we omit the use of a CFL-condition. Last but not least, for more general nonlinear operators such as the $p$-Laplace operator, an implicit time discretization is more appropriate. However, an explicit time discretization of the noise is crucial and can not be omited due to the non-anticipative character of the It\^{o} stochastic integral.
\end{remark}

We can note that by multiplying equation~\eqref{equationapprox} by $w_K$, summing over $K\in\Tau$ and using equality~\eqref{PInt}, the numerical scheme can be rewritten as: For any $n\in\{0,\ldots,N-1\}$, find $u_h^{n+1}\in\re^{d_h}$ such that for any $w_h \in \re^{d_h}$,
\begin{align}\label{eq:FV_var}
\begin{aligned}
&\sum_{K\in\Tau} m_K \left(u^{n+1}_K -u_K^n\right) w_K 
+\Delta t \sum_{\sigma\in\edgesint}\frac{m_\sigma}{\dkl} (u^{n+1}_K-u^{n+1}_L)(w_K-w_L) \\
&= \sum_{K\in\Tau} m_K g(u_K^n)w_K\Delta_{n+1}W.
\end{aligned}
\end{align}
The two formulations are equivalent but this "variational" formulation will be more useful in the analysis to follow.

\begin{prop}[Existence of a discrete solution]
\label{210609_prop1}
Assume that hypotheses $H_1$ and $H_2$ hold. Let $\Tau$ be an admissible finite-volume mesh of $\Lambda$  in the sense of Definition \ref{defmesh} with a mesh size $h$ and $N\in \mathbb{N}^*$. Then, there exists a unique solution $(u_h^n)_{1\le n \le N} \in (\re^{d_h})^N$ to Problem~\eqref{equationapprox} associated with the initial vector $u^0_h$ defined by~\eqref{eq:def_u0}. Additionally, for any $n\in \{0,\ldots,N\}$, $u_h^n$ is a $\mathcal{F}_{t_n}$-measurable random vector.
\end{prop}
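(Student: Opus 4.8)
The plan is to read \eqref{equationapprox}, for a fixed index $n$ and almost every $\omega\in\Omega$, as a \emph{linear} system for the unknown vector $u_h^{n+1}\in\re^{d_h}$, the data $u_h^n$ and the increment $\Delta_{n+1}W$ being known. Gathering on the left the terms carrying $u_h^{n+1}$, the scheme reads $A_{\Delta t}\,u_h^{n+1}=M u_h^n+M\,g(u_h^n)\,\Delta_{n+1}W$, where $M=\operatorname{diag}(m_K)_{K\in\Tau}$ is the (diagonal, strictly positive) mass matrix, $g(u_h^n)$ is understood componentwise, and $A_{\Delta t}=M+\Delta t\,\mathbb{L}$ with $\mathbb{L}$ the two-point-flux stiffness matrix defined by $(\mathbb{L} v)_K=\sum_{\sigma\in\edgesint\cap\edges_K}\frac{m_\sigma}{\dkl}(v_K-v_L)$. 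The whole statement then reduces to the invertibility of the deterministic matrix $A_{\Delta t}$.

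To prove invertibility I would show that $A_{\Delta t}$ is symmetric positive definite. Symmetry follows at once from the invariance of $m_\sigma/\dkl$ under the exchange of $K$ and $L$. For positive definiteness I test the associated bilinear form against an arbitrary $v_h\equiv(v_K)_{K\in\Tau}$ and use the discrete integration-by-parts rule \eqref{PInt}, exactly as in the passage from \eqref{equationapprox} to \eqref{eq:FV_var}, which yields
\[
\langle A_{\Delta t}v_h,v_h\rangle=\sum_{K\in\Tau}m_K v_K^2+\Delta t\sum_{\sigma\in\edgesint}\frac{m_\sigma}{\dkl}(v_K-v_L)^2=\|v_h\|_{L^2(\Lambda)}^2+\Delta t\,|v_h|_{1,h}^2.
\]
Since every $m_K>0$, the right-hand side is bounded below by $\|v_h\|_{L^2(\Lambda)}^2>0$ whenever $v_h\neq0$, so $A_{\Delta t}$ is positive definite and hence invertible. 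As $A_{\Delta t}$ is deterministic, this holds for every $\omega$ simultaneously.

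With $A_{\Delta t}^{-1}$ at hand, existence and uniqueness follow by induction on $n$: starting from $u_h^0$ given by \eqref{eq:def_u0}, the identity $u_h^{n+1}=A_{\Delta t}^{-1}\big(M u_h^n+M g(u_h^n)\Delta_{n+1}W\big)$ determines $u_h^{n+1}$ uniquely for a.e. $\omega$. The measurability claim is proved along the same induction. The vector $u_h^0$ is $\mathcal{F}_0$-measurable because each $u_K^0$ is an average of the $\mathcal{F}_0$-measurable function $u_0$ (hypothesis $H_1$). Assuming $u_h^n$ is $\mathcal{F}_{t_n}$-measurable, $g(u_h^n)$ is $\mathcal{F}_{t_n}$-measurable since $g$ is continuous ($H_2$), while $\Delta_{n+1}W=W(t_{n+1})-W(t_n)$ is $\mathcal{F}_{t_{n+1}}$-measurable; as $A_{\Delta t}^{-1}$ and $M$ are deterministic and $\mathcal{F}_{t_n}\subseteq\mathcal{F}_{t_{n+1}}$, the product and sum, hence $u_h^{n+1}$, are $\mathcal{F}_{t_{n+1}}$-measurable. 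This gives the asserted $\mathcal{F}_{t_n}$-measurability for all $n\in\{0,\dots,N\}$.

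The only point of real substance is the coercivity estimate of the second step: the strict positive definiteness of $A_{\Delta t}$, and therefore the absence of any CFL-type constraint relating $\Delta t$ and $h$, rests on two structural features, namely the implicit discretization of the Laplacian (contributing the nonnegative term $\Delta t\,|v_h|_{1,h}^2$) and the strict positivity of the volumes $m_K$ (which makes the mass term alone already coercive). Everything else is routine finite-dimensional linear algebra and measurability bookkeeping; in particular, no joint-measurability difficulty arises, because the system matrix $A_{\Delta t}$ is deterministic and the path-by-path solution is automatically measurable as the image of a measurable right-hand side under a fixed linear map.
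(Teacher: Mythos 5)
Your proof is correct, but it takes a genuinely different route from the paper. You exploit the linearity of the scheme directly: the problem becomes inversion of the deterministic matrix $A_{\Delta t}=M+\Delta t\,\mathbb{L}$, whose positive definiteness you obtain from the strict positivity of the volumes $m_K$ and the nonnegativity of the stiffness contribution via \eqref{PInt}. The paper instead proceeds variationally: it rewrites \eqref{eq:FV_var} as the Euler--Lagrange equation of a functional $J_h^n(w_h)=\frac12 a(w_h,w_h)-\int_\Lambda w_h f_h^n\,dx$ with $f_K^n=g(u_K^n)\Delta_{n+1}W+u_K^n$, checks that the bilinear form $a$ is symmetric, continuous and coercive (which is exactly your positive-definiteness computation in disguise), and invokes the theorem of Stampacchia to produce a unique minimizer $u_h^{n+1}$; measurability is then deduced from the $\mathcal{F}_{t_{n+1}}$-measurability of $\omega\mapsto J_h^n(w_h)(\omega)$ for each fixed $w_h$. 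The trade-off is clear. The paper's minimization argument is designed to survive the replacement of the Laplacian by a nonlinear monotone operator such as the $p$-Laplacian, which the authors explicitly announce as a future goal, whereas your linear-system argument would not carry over. On the other hand, for the problem actually at hand your route is more elementary, and your measurability step is the cleaner of the two: $u_h^{n+1}$ is the image of the $\mathcal{F}_{t_{n+1}}$-measurable vector $Mu_h^n+Mg(u_h^n)\Delta_{n+1}W$ under the fixed continuous linear map $A_{\Delta t}^{-1}$, which requires no discussion of the measurability of an argmin. Both proofs establish the statement in full.
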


The solution $(u_h^n)_{1\le n \le N} \in (\re^{d_h})^N$ of the scheme \eqref{eq:def_u0}-\eqref{equationapprox} is then used to build the right and left finite-volume approximations $u^r_{h,N}$ and $u^l_{h,N}$ defined by \eqref{eq:notation_wh} for the variational solution $u$ of Problem \eqref{equation}.

\begin{proof}(Proposition \ref{210609_prop1}).
Set $n\in\{0,\ldots,N-1\}$. For $K\in\Tau$ and a.s. in $\Omega$, note that~\eqref{eq:FV_var} can be rewritten in the following way:
\begin{equation}\label{210609_01}
\sum_{K\in\Tau} m_K \left(u^{n+1}_K -f_K^n\right) w_K 
+\Delta t \sum_{\sigma\in\edgesint}\frac{m_\sigma}{\dkl} (u^{n+1}_K-u^{n+1}_L)(w_K-w_L)=0,
\end{equation}
where $f_K^n:=g(u_K^n)\Delta_{n+1}W +u_K^n$.
For $f_h^n\equiv(f_K^n)_{K\in\Tau}\in\mathbb{R}^{d_h}$ and a.e. $\omega\in \Omega$, we can define the functional $J_h^n:\mathbb{R}^{d_h}\rightarrow \mathbb{R}$ by 
\[
 J_h^n(w_h)= \frac12 a(w_h,w_h)- \int_{\Lambda}w_h f_h^n \,dx
 \]
 where the bilinear form $a:\mathbb{R}^{d_h}\times\mathbb{R}^{d_h}\rightarrow \mathbb{R}$ is given by
 \[
 a(v_h,w_h)=\int_{\Lambda}v_h w_h\,dx +\Delta t\sum_{\sigma\in\edgesint}\frac{m_\sigma}{\dkl} (u_K-u_L)(w_K-w_L).
\]
From a straightforward calculation it is easy to see that the bilinear form $a$ is symmetric, continuous and coercive.

Thus from the Theorem of Stampacchia (see e.g. \cite[Theorem 5.6]{BrezisFA}), $J_h^n$ admits a unique minimizer $u_h^{n+1}\in\mathbb{R}^{d_h}$ and the associated sequence $(u_h^n)_{1\le n \le N} \in (\re^{d_h})^N$  is the unique solution of \eqref{210609_01} a.s. in $\Omega$.
If we assume that $u_h^n$ is $\mathcal{F}_{t_n}$-measurable,  then $f_h^n$ is $\mathcal{F}_{t_{n+1}}$-measurable and consequently the random variable $\omega\mapsto J_h^n(w_h)(\omega)$ is $\mathcal{F}_{t_{n+1}}$-measurable for any $w_h\in\mathbb{R}^{d_h}$.
Hence,  $\omega\mapsto u_h^{n+1}(\omega)=\min_{w_h\in\mathbb{R}^{d_h}} J_h^n(w_h)(\omega)$ is $\mathcal{F}_{t_{n+1}}$-measurable.
Then, it follows by iteration, that for a given, $\mathcal{F}_0$-measurable random variable $u^0_h\in\mathbb{R}^{d_h}$, for any $n\in \{0,\ldots, N-1\}$ there exists a $\mathcal{F}_{t_{n+1}}$-measurable function $u_h^{n+1}\in\mathbb{R}^{d_h}$ such that $(u_h^n)_{1\le n \le N} \in (\re^{d_h})^N$ is solution to Problem \eqref{equationapprox} associated with the initial vector $u^0_h$ which concludes the proof.
\end{proof}

\section{Stability estimates}\label{estimates}

We will derive in this section several stability estimates satisfied by the discrete solution $(u_h^n)_{1 \le n \le N} \in (\re^{d_h})^N$ of the scheme \eqref{eq:def_u0}-\eqref{equationapprox} given by Proposition \ref{210609_prop1}, and also by the associated right and left finite-volume approximations $u^r_{h,N}$ and $u^l_{h,N}$ defined by \eqref{eq:notation_wh}.

\subsection{Bounds on the finite-volume approximations}
We start by giving a bound on the discrete initial data.
\begin{lem}
\label{bound_u0}
Let $u_0$ be a given function satisfying assumption $H_1$. Then, the associated discrete initial data $u_h^0 \in \re^{d_h}$ defined by~\eqref{eq:def_u0} satisfies $\mathds{P}$-a.s. in $\Omega$,
\begin{equation*}
\|u_h^0\|_{L^2(\Lambda)} \leq \|u_0\|_{L^2(\Lambda)}.
\end{equation*}
\end{lem}
The proof is a direct consequence of the definition of $u_h^0$ and the Cauchy-Schwarz inequality.\\

We can now give the bounds on the discrete solutions which is one of the key points of the proof of the convergence theorem.

\begin{prop}[Bounds on the discrete solutions]
\label{bounds}
There exists a constant $C_1>0$, depending only on $u_0$, $C_L$, $|\Lambda|$ and $T$ such that
\begin{align*}
&\erwb \|u_h^n\|_{L^2(\Lambda)}^2 \erwe+\erwb\sum_{k=0}^{n-1}\|u_h^{k+1}-u_h^k\|_{L^2(\Lambda)}^2\erwe+2\Delta t \sum_{k=0}^{n-1}\erww{|u_h^{k+1}|_{1,h}^2}\leq C_1,\; \forall n\in \{1,\ldots,N\}.
\end{align*}
\end{prop}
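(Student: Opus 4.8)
The plan is to derive a one-step discrete energy inequality by testing the variational formulation \eqref{eq:FV_var} against the implicit iterate itself, and then to sum over the time steps after taking expectations. Concretely, I would fix $k\in\{0,\dots,n-1\}$ and choose $w_h=u_h^{k+1}$ in \eqref{eq:FV_var}. For the discrete time-derivative term I would use the elementary identity $a(a-b)=\tfrac12\bigl(a^2-b^2+(a-b)^2\bigr)$ with $a=u_K^{k+1}$, $b=u_K^k$, summed with the weights $m_K$, which produces
\[
\tfrac12\|u_h^{k+1}\|_{L^2(\Lambda)}^2-\tfrac12\|u_h^k\|_{L^2(\Lambda)}^2+\tfrac12\|u_h^{k+1}-u_h^k\|_{L^2(\Lambda)}^2 .
\]
By the definition of the discrete $H^1$-seminorm, the diffusion term equals exactly $\Delta t\,|u_h^{k+1}|_{1,h}^2$, so all three quantities appearing on the left of the Proposition are already present at this stage; the whole difficulty is concentrated in the stochastic term on the right.

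Next I would treat the stochastic term $S_k:=\sum_{K\in\Tau}m_K\,g(u_K^k)\,u_K^{k+1}\,\Delta_{k+1}W$. Since the scheme is implicit in the diffusion but explicit in the noise, $u_h^{k+1}$ is only $\mathcal{F}_{t_{k+1}}$-measurable (Proposition \ref{210609_prop1}) and is therefore correlated with $\Delta_{k+1}W$; hence $S_k$ is \emph{not} a martingale increment and does not vanish in expectation. To isolate a genuine mean-zero part I would split $u_K^{k+1}=(u_K^{k+1}-u_K^k)+u_K^k$, writing $S_k=A_k+B_k$. The term $B_k=\sum_K m_K\,g(u_K^k)\,u_K^k\,\Delta_{k+1}W$ has an $\mathcal{F}_{t_k}$-measurable prefactor multiplied by an increment independent of $\mathcal{F}_{t_k}$ with zero mean, so $\mathbb{E}[B_k]=0$. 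The remaining term $A_k$ I would estimate \emph{pathwise} by Cauchy--Schwarz in $L^2(\Lambda)$ followed by Young's inequality,
\[
A_k\le \tfrac14\|u_h^{k+1}-u_h^k\|_{L^2(\Lambda)}^2+|\Delta_{k+1}W|^2\,\|g(u_h^k)\|_{L^2(\Lambda)}^2,
\]
the coefficient $\tfrac14$ being chosen precisely so that this contribution is absorbed into the $\tfrac12\|u_h^{k+1}-u_h^k\|^2$ produced above while still leaving a strictly positive multiple of the difference term on the left.

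Taking expectations (so that $B_k$ drops out), using the independence of $|\Delta_{k+1}W|^2$ from $\mathcal{F}_{t_k}$ together with $\mathbb{E}[|\Delta_{k+1}W|^2]=\Delta t$, and invoking the growth bound \eqref{H3} in the form $\|g(u_h^k)\|_{L^2(\Lambda)}^2\le C_L\bigl(|\Lambda|+\|u_h^k\|_{L^2(\Lambda)}^2\bigr)$, I would arrive at a one-step inequality of the shape
\[
\tfrac12\mathbb{E}\|u_h^{k+1}\|^2-\tfrac12\mathbb{E}\|u_h^k\|^2+\tfrac14\mathbb{E}\|u_h^{k+1}-u_h^k\|^2+\Delta t\,\mathbb{E}|u_h^{k+1}|_{1,h}^2\le C_L\,\Delta t\bigl(|\Lambda|+\mathbb{E}\|u_h^k\|^2\bigr).
\]
Summing from $k=0$ to $n-1$ telescopes the first two terms, and Lemma \ref{bound_u0} bounds the resulting $\mathbb{E}\|u_h^0\|^2$ by $\mathbb{E}\|u_0\|^2$, finite by $H_1$. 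After multiplying by a fixed constant, the left-hand side dominates, term by term, the exact combination $\mathbb{E}\|u_h^n\|^2+\mathbb{E}\sum_k\|u_h^{k+1}-u_h^k\|^2+2\Delta t\sum_k\mathbb{E}|u_h^{k+1}|_{1,h}^2$ of the Proposition.

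Finally, the only remaining coupling is the term $C_L\sum_{k=0}^{n-1}\Delta t\,\mathbb{E}\|u_h^k\|^2$ on the right, which I would close with the discrete Gronwall lemma: dropping the nonnegative difference and gradient sums gives $\mathbb{E}\|u_h^n\|^2\le\bigl(\mathbb{E}\|u_0\|^2+C\,T|\Lambda|\bigr)e^{C_L T}$, uniform in $m$ and $n$, and feeding this bound back into the summed inequality controls the difference and gradient sums as well, producing the constant $C_1$ depending only on $u_0$, $C_L$, $|\Lambda|$ and $T$. The main obstacle throughout is the stochastic term: the non-adaptedness of $u_h^{k+1}$ is what forces the decomposition $S_k=A_k+B_k$, and the quantitative heart of the argument is the choice of the Young parameter so that absorbing $A_k$ retains the difference term $\mathbb{E}\|u_h^{k+1}-u_h^k\|^2$ with a positive coefficient instead of cancelling it outright.
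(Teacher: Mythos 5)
Your proposal is correct and follows essentially the same route as the paper: test \eqref{eq:FV_var} with $u_h^{k+1}$, use the identity $a(a-b)=\tfrac12(a^2-b^2+(a-b)^2)$, split the noise term via $u_K^{k+1}=u_K^k+(u_K^{k+1}-u_K^k)$ so that the adapted part has zero mean, absorb the remainder by Young's inequality with parameter $\tfrac14$, sum, and close with the discrete Gronwall lemma before feeding the resulting uniform bound back in. The only cosmetic difference is that you apply Cauchy--Schwarz and Young pathwise before taking expectations, whereas the paper takes expectations first and invokes the It\^{o} isometry; the computations coincide.
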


\begin{proof}
We fix $n\in \{1,\ldots,N\}$. For any $k\in \{0,\ldots,n-1\}$, choosing $w_h=u_h^{k+1}$ as test function in~\eqref{eq:FV_var} we obtain,
\begin{align}\label{implicitscheme}
\begin{split}
&\sum_{K\in\Tau}\frac{m_K}{\Delta t}(u_K^{k+1}-u_K^k)u_K^{k+1}
+ \sum_{\sigma\in\edgesint}\frac{m_\sigma}{\dkl}|u_K^{k+1}-u_L^{k+1}|^2\\
&=\sum_{K\in\Tau}\frac{m_K}{\Delta t}g(u_K^k)u_K^k\Delta_{k+1}W+\sum_{K\in\Tau}\frac{m_K}{\Delta t}g(u_K^k)(u_K^{k+1}-u_K^k)\Delta_{k+1}W.
\end{split}
\end{align}
We consider the terms separately: For the first term on the left-hand side we find
\begin{align*}
\sum_{K\in\Tau}\frac{m_K}{\Delta t}(u_K^{k+1}-u_K^k)u_K^{k+1}=\halbe\sum_{K\in\Tau}\frac{m_K}{\Delta t}(|u_K^{k+1}|^2-|u_K^k|^2+|u_K^{k+1}-u_K^k|^2).
\end{align*}
Taking expectation in \eqref{implicitscheme}, the first expression on the right-hand side of \eqref{implicitscheme} vanishes, since $u_K^k$ and $\Delta_{k+1}W$ are independent and therefore
\[\erwb g(u_K^k)u_K^k\Delta_{k+1}W\erwe=0.\]
In the second term we apply Young's inequality in order to keep all necessary terms. Then, taking expectation and using the It\^{o} isometry we obtain
\begin{align*}
\erwb g(u_K^k)(u_K^{k+1}-u_K^k)\Delta_{k+1}W\erwe&\leq\erwb|g(u_K^k)\Delta_{k+1}W|^2\erwe+\frac{1}{4}\erwb|u_K^{k+1}-u_K^k|^2\erwe\\
&\leq \Delta t\erwb|g(u_K^k)|^2\erwe+\frac{1}{4}\erwb|u_K^{k+1}-u_K^k|^2\erwe
\end{align*}
for any $K\in \Tau$. Altogether we find
\begin{align*}
&\frac{1}{2\Delta t}\int_\Lambda\erwb |u_h^{k+1}|^2-|u_h^k|^2\erwe dx+\frac{1}{4\Delta t}\int_\Lambda\erwb|u_h^{k+1}-u_h^k|^2\erwe dx+\erwb|u_h^{k+1}|_{1,h}^2\erwe\\
&\leq \int_\Lambda\erwb|g(u_h^k)|^2\erwe dx.
\end{align*}
Summing over $k\in\{0,\dots,n-1\}$ and multiplying with $2\Delta t$ we obtain
\begin{align}\label{lem1beschr}
\begin{split}
&\erwb\|u_h^n\|_{L^2(\Lambda)}^2-\|u_h^0\|_{L^2(\Lambda)}^2\erwe+\frac{1}{2}\sum_{k=0}^{n-1}\erww{\|u_h^{k+1}-u_h^k\|_{L^2(\Lambda)}^2}+ 2\Delta t\sum_{k=0}^{n-1}\erwb|u_h^{k+1}|_{1,h}^2\erwe\\
&\leq 2\Delta t\sum_{k=0}^{n-1}\erwb\|g(u_h^k)\|_{L^2(\Lambda)}^2\erwe.
\end{split}
\end{align}
Since the second and third term in \eqref{lem1beschr} are nonnegative, from $H_2$ and \eqref{H3} it follows that
\[
\erwb\|u_h^n\|_{L^2(\Lambda)}^2\erwe\leq\erwb\|u_h^0\|_{L^2(\Lambda)}^2\erwe+2C_L\Delta t\sum_{k=0}^{n-1}\erwb\|u_h^k\|_{L^2(\Lambda)}^2\erwe+2C_L|\Lambda|T.
\]
Applying the discrete Gronwall lemma yields
\begin{align}\label{uhnbound}
\erwb\|u_h^n\|_{L^2(\Lambda)}^2\erwe\leq\left((1+2C_L T)\erwb\|u_h^0\|_{L^2(\Lambda)}^2\erwe+2C_L|\Lambda|T\right)e^{2C_LT}.
\end{align}
From \eqref{uhnbound} and Lemma~\ref{bound_u0} we may conclude that there exists a constant $\Upsilon>0$ such that
\begin{align*}
\sup_{n\in\{1,\dots,N\}}\erww{\|u_h^n\|_{L^2(\Lambda)}^2}\leq \Upsilon.
\end{align*}
Applying \eqref{uhnbound}, $H_2$ and \eqref{H3} it follows that
\begin{align}\label{210819_02}
2\Delta t\sum_{k=0}^{n-1}\erww{\|g(u_h^k)\|_{L^2(\Lambda)}^2}\leq 2C_L|\Lambda|T+2C_L\Delta t\sum_{k=0}^{n-1}\erww{\|u_h^k\|_{L^2(\Lambda)}^2}\leq 2C_LT(\Upsilon+|\Lambda|)
\end{align}
for all $n\in\{1,\ldots N\}$. From \eqref{lem1beschr}, Lemma~\ref{bound_u0} and \eqref{210819_02} it now follows that
\begin{align*}
&\erwb\|u_h^n\|_{L^2(\Lambda)}^2\erwe+\frac{1}{2}\sum_{k=0}^{n-1}\erww{\|u_h^{k+1}-u_h^k\|_{L^2(\Lambda)}^2}+ 2\Delta t\sum_{k=0}^{n-1}\erwb|u_h^{k+1}|_{1,h}^2\erwe\\
&\leq \erwb\|u_0\|_{L^2(\Lambda)}^2\erwe+2C_LT(\Upsilon+|\Lambda|)=:C_1
\end{align*}
for all $n\in \{1,\ldots, N\}$.
\end{proof}

We are now interested in the bounds on the right and left finite-volume approximations defined by~\eqref{eq:notation_wh}.
As a direct consequence of Proposition~\ref{bounds} we get a $L^2(\Omega;L^2(0,T;\lzlambda))$-bound on these approximations.
\begin{lem}\label{210611_lem01}
The sequences $(\uhnr)_{h,N}$ and $(\uhnl)_{h,N}$ are bounded independently of the discretization parameters $N\in\mathbb{N}^{\ast}$ and $h$ in $L^2(\Omega;L^2(0,T;\lzlambda))$.
\end{lem}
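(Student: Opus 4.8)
The plan is to reduce everything to the piecewise-constant-in-time structure of the approximations and then invoke the uniform estimate of Proposition~\ref{bounds}. First I would compute the $L^2(\Omega;L^2(0,T;\lzlambda))$-norm of $\uhnr$ directly from the definition \eqref{eq:notation_wh}. Since $\uhnr(t)=u_h^{n+1}$ on each interval $[t_n,t_{n+1})$ and these intervals have length $\Delta t$, Fubini's theorem together with the orthogonality of the indicators $\chitk$ gives
\[
\erwb\int_0^T\|\uhnr(t)\|_{\lzlambda}^2\,dt\erwe
=\sum_{n=0}^{N-1}\Delta t\,\erwb\|u_h^{n+1}\|_{\lzlambda}^2\erwe
=\sum_{m=1}^{N}\Delta t\,\erww{\|u_h^{m}\|_{\lzlambda}^2}.
\]
Each index $m$ appearing here lies in $\{1,\ldots,N\}$, so Proposition~\ref{bounds} applies to every summand and yields $\erww{\|u_h^{m}\|_{\lzlambda}^2}\leq C_1$. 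Hence the sum is bounded by $N\,\Delta t\,C_1=T\,C_1$, which is independent of $N$ and $h$.

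For $\uhnl$ I would carry out the same computation, observing that now $\uhnl(t)=u_h^n$ on $[t_n,t_{n+1})$, so that
\[
\erwb\int_0^T\|\uhnl(t)\|_{\lzlambda}^2\,dt\erwe
=\sum_{n=0}^{N-1}\Delta t\,\erww{\|u_h^{n}\|_{\lzlambda}^2}.
\]
The only point requiring care is the $n=0$ term, since Proposition~\ref{bounds} controls $\erww{\|u_h^{n}\|_{\lzlambda}^2}$ only for $n\in\{1,\ldots,N\}$ and leaves out the initial datum $u_h^0$. I would isolate this term and bound it separately via Lemma~\ref{bound_u0}, which gives $\|u_h^0\|_{\lzlambda}\leq\|u_0\|_{\lzlambda}$ $\mathds P$-a.s., hence $\erww{\|u_h^{0}\|_{\lzlambda}^2}\leq\erww{\|u_0\|_{\lzlambda}^2}<\infty$ by $H_1$. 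Since $\Delta t\leq T$, the full sum is then dominated by $T\,\erww{\|u_0\|_{\lzlambda}^2}+T\,C_1$, again independent of the discretization parameters.

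This establishes the uniform $L^2(\Omega;L^2(0,T;\lzlambda))$-bound for both sequences. There is no genuine obstacle here; the statement is indeed a direct consequence of Proposition~\ref{bounds}. The only bookkeeping subtlety, and the one place where a careless argument could fail, is the index shift distinguishing $\uhnr$ from $\uhnl$: the right approximation samples $u_h^{n+1}$ and stays within the range covered by Proposition~\ref{bounds}, whereas the left approximation samples $u_h^{n}$ and picks up the extra initial term $u_h^0$, for which Lemma~\ref{bound_u0} must be used.
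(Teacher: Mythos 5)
Your proof is correct and is exactly the computation the paper has in mind: the paper states this lemma without proof as ``a direct consequence of Proposition~\ref{bounds},'' and your argument — unfolding the time integral into $\Delta t\sum_n\erww{\|u_h^{n}\|_{\lzlambda}^2}$, invoking Proposition~\ref{bounds} for the indices $1,\ldots,N$, and treating the extra $u_h^0$ term for $\uhnl$ via Lemma~\ref{bound_u0} — is precisely that direct consequence, with the index shift handled correctly.
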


Thanks to Proposition~\ref{bounds} we can also obtain a $L^2(\Omega;L^2(0,T;L^2(\Lambda)))$-bound on the discrete gradients of the finite-volume approximations.

\begin{lem}\label{remarkuhnrboundintomega}
There exist a constant $\ctel{K1}\geq 0$ depending only on $u_0$, $C_L$, $|\Lambda|$ and $T$ and a constant $\ctel{K2}\geq 0$ additionally depending on the mesh regularity $\reg$ (defined by \eqref{mrp}), such that
\begin{align}\label{uhnrboundinotomega}
\int_0^T\erwb|\uhnr(t)|_{1,h}^2\erwe dt\leq \cter{K1}
\end{align}
and
\begin{align}\label{210611_01}
\int_0^T\erwb|\uhnl(t)|_{1,h}^2\erwe dt\leq \cter{K2}.
\end{align}
\end{lem}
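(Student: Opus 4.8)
The plan is to derive both bounds directly from the a priori estimate of Proposition~\ref{bounds}; the only genuinely new ingredient is the treatment of the initial layer $n=0$, which arises for the left approximation and is precisely where the mesh regularity enters.

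First I would handle the right approximation \eqref{uhnrboundinotomega}. Since by \eqref{eq:notation_wh} we have $\uhnr(t)=u_h^{n+1}$ on each interval $[t_n,t_{n+1})$, the time integral collapses into a Riemann sum,
\begin{align*}
\int_0^T\erwb|\uhnr(t)|_{1,h}^2\erwe dt=\Delta t\sum_{n=0}^{N-1}\erwb|u_h^{n+1}|_{1,h}^2\erwe .
\end{align*}
This is exactly one half of the third term in Proposition~\ref{bounds} evaluated at $n=N$; as all three terms there are nonnegative, one gets $2\Delta t\sum_{n=0}^{N-1}\erwb|u_h^{n+1}|_{1,h}^2\erwe\le C_1$, so the bound holds with $\cter{K1}:=C_1/2$, which depends only on $u_0$, $C_L$, $|\Lambda|$ and $T$.

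Next I would treat the left approximation. Here $\uhnl(t)=u_h^n$ on $[t_n,t_{n+1})$, so
\begin{align*}
\int_0^T\erwb|\uhnl(t)|_{1,h}^2\erwe dt=\Delta t\sum_{n=0}^{N-1}\erwb|u_h^{n}|_{1,h}^2\erwe=\Delta t\,\erwb|u_h^{0}|_{1,h}^2\erwe+\Delta t\sum_{n=1}^{N-1}\erwb|u_h^{n}|_{1,h}^2\erwe .
\end{align*}
After reindexing, the second sum is dominated by $\Delta t\sum_{k=0}^{N-1}\erwb|u_h^{k+1}|_{1,h}^2\erwe\le\cter{K1}$, so the whole estimate reduces to controlling the initial term $\Delta t\,\erwb|u_h^{0}|_{1,h}^2\erwe$. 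Since $\Delta t=T/N\le T$, it suffices to bound $\erwb|u_h^{0}|_{1,h}^2\erwe$ independently of the discretization parameters.

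The main obstacle is therefore the estimate of the discrete $H^1$-seminorm of the initial projection $u_h^0$, whose entries are the cell averages $u_K^0=\tfrac{1}{m_K}\int_K u_0\,dx$ from \eqref{eq:def_u0}. This is where the mesh regularity appears and is the technical heart of the lemma. I would invoke the standard discrete functional analysis estimate from the finite-volume literature (see \cite{gal}): for every $v\in H^1(\Lambda)$ the cell-average projection satisfies
\begin{align*}
\sum_{\sigma=K|L\in\edgesint}\frac{m_\sigma}{\dkl}|v_K-v_L|^2\le C(\reg)\,\|\nabla v\|_{L^2(\Lambda)}^2 ,
\end{align*}
where the constant depends on the mesh only through the regularity quantity $\reg$ from \eqref{mrp}. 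For smooth $v$ this is obtained by writing $v_K-v_L=\tfrac{1}{m_Km_L}\int_K\int_L\big(v(x)-v(y)\big)\,dx\,dy$ with $v(x)-v(y)=\int_0^1\nabla v(y+s(x-y))\cdot(x-y)\,ds$, controlling the geometric factors $|x-y|$ and $m_K,m_L$ against $\dkl$ and $m_\sigma$ by $\reg$, and summing over edges so that each gradient contribution is counted a bounded number of times; the general case follows by density of $C^\infty(\overline\Lambda)$ in $H^1(\Lambda)$ (the delicate point, handled in \cite{gal}, is that the straight segments leave $K\cup L$, which forces the overlap counting to be controlled uniformly by $\reg$ and $\mathcal N$). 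Applying this pathwise to $v=u_0(\omega)\in H^1(\Lambda)$ and taking expectation gives $\erwb|u_h^0|_{1,h}^2\erwe\le C(\reg)\,\erwb\|\nabla u_0\|_{L^2(\Lambda)}^2\erwe$, which is finite by $H_1$. Collecting the three contributions yields \eqref{210611_01} with $\cter{K2}:=\cter{K1}+T\,C(\reg)\,\erwb\|\nabla u_0\|_{L^2(\Lambda)}^2\erwe$, which indeed depends additionally on $\reg$, as claimed.
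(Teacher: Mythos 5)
Your proof is correct and follows essentially the same route as the paper: the time integrals collapse to Riemann sums controlled by the third term in Proposition~\ref{bounds}, and the only extra ingredient for the left approximation is the discrete $H^1$-estimate $\erwb |u_h^0|_{1,h}^2\erwe\leq C(\reg)\,\erwb\|\nabla u_0\|_{L^2(\Lambda)}^2\erwe$ for the cell-average projection, which the paper simply cites from \cite[Lemma 9.4]{gal} and which you additionally sketch. No gaps.
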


\begin{proof}
\[\int_0^T\erwb|\uhnr(t)|_{1,h}^2\erwe dt=\Delta t\sum_{k=0}^{N-1}\erwb|u_h^{k+1}|_{1,h}^2\erwe\]
and therefore \eqref{uhnrboundinotomega} follows directly from Proposition~\ref{bounds}. Using the definition of $\uhnl$ and \eqref{uhnrboundinotomega}, we get
\begin{align*}
\int_0^T\erwb|\uhnl(t)|_{1,h}^2\erwe dt
&\leq \Delta t\erwb |u_h^0|_{1,h}^2\erwe+\Delta t \sum_{k=0}^{N-1}\erwb|u_h^{k+1}|_{1,h}^2\erwe
\leq \Delta t\erwb|u_h^0|_{1,h}^2\erwe+\cter{K1}.
\end{align*}
Since $u_0$ is assumed to be in $L^2(\Omega;H^1(\Lambda))$, from \cite[Lemma 9.4]{gal}, it follows that there exists $C_{\Lambda}\geq 0$ depending on the mesh regularity $\reg$ such that,
\begin{equation*}
\erwb |u_h^0|_{1,h}^2\erwe\leq C_{\Lambda}\erwb \|\nabla u_0\|_{L^2(\Lambda)}^2\erwe
\end{equation*}
and therefore \eqref{210611_01} follows.
\end{proof}

We end this section by a bound on the discrete solution which will be useful for obtaining the time translate estimate and the bounds on the Gagliardo seminorm.
Note that the difficulty here is to have the maximum inside the expectation.

\begin{lem}\label{uhnsupbound}
There exists a constant $\ctel{K4}\geq 0$ independent of the discretization parameters $N\in\mathbb{N}^{\ast}$ and $h$, such that
\begin{align*}
\erwb\max_{n\in\{0,\dots,N\}}\|u_h^n\|_{L^2(\Lambda)}^2\erwe\leq \cter{K4}.
\end{align*}
\end{lem}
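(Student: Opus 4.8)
The plan is to revisit the pathwise energy balance from the proof of Proposition~\ref{bounds}, but to keep the stochastic terms intact and to take the maximum over $n$ \emph{before} passing to the expectation. Choosing $w_h=u_h^{k+1}$ in \eqref{eq:FV_var}, writing $2(u_K^{k+1}-u_K^k)u_K^{k+1}=|u_K^{k+1}|^2-|u_K^k|^2+|u_K^{k+1}-u_K^k|^2$ and summing \eqref{implicitscheme} over $k\in\{0,\dots,n-1\}$, I obtain for each $n\in\{1,\dots,N\}$ the $\mathds{P}$-a.s.\ identity
\[
\|u_h^n\|_{L^2(\Lambda)}^2+\sum_{k=0}^{n-1}\|u_h^{k+1}-u_h^k\|_{L^2(\Lambda)}^2+2\Delta t\sum_{k=0}^{n-1}|u_h^{k+1}|_{1,h}^2=\|u_h^0\|_{L^2(\Lambda)}^2+R_1^n+R_2^n,
\]
with $R_1^n:=2\sum_{k=0}^{n-1}\left(\int_\Lambda g(u_h^k)u_h^k\,dx\right)\Delta_{k+1}W$ and $R_2^n:=2\sum_{k=0}^{n-1}\left(\int_\Lambda g(u_h^k)(u_h^{k+1}-u_h^k)\,dx\right)\Delta_{k+1}W$. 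The term $R_1^n$ carries the martingale structure, while $R_2^n$ is the cross term already met in Proposition~\ref{bounds}.

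First I would dispose of $R_2^n$ by a volume-wise Young inequality, which gives $R_2^n\le\halbe\sum_{k=0}^{n-1}\|u_h^{k+1}-u_h^k\|_{L^2(\Lambda)}^2+2\sum_{k=0}^{n-1}|\Delta_{k+1}W|^2\|g(u_h^k)\|_{L^2(\Lambda)}^2$; the first sum is absorbed by the identical nonnegative term on the left. Discarding the remaining nonnegative terms on the left, taking $\max_{0\le n\le N}$ and then the expectation leaves three contributions to estimate. The initial contribution $\erww{\|u_h^0\|_{L^2(\Lambda)}^2}$ is controlled by Lemma~\ref{bound_u0}. For the last contribution, since $\|g(u_h^k)\|_{L^2(\Lambda)}^2$ is $\mathcal{F}_{t_k}$-measurable and $|\Delta_{k+1}W|^2$ is independent of $\mathcal{F}_{t_k}$ with mean $\Delta t$, the tower property yields $2\erww{\sum_{k=0}^{N-1}|\Delta_{k+1}W|^2\|g(u_h^k)\|_{L^2(\Lambda)}^2}=2\Delta t\sum_{k=0}^{N-1}\erww{\|g(u_h^k)\|_{L^2(\Lambda)}^2}$, which is bounded by \eqref{210819_02}.

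The main obstacle is the martingale term $\erww{\max_{0\le n\le N}R_1^n}$, where the maximum sits inside the expectation. I would view $R_1^n=\sum_{k=0}^{n-1}a_k\Delta_{k+1}W$, with $a_k:=2\int_\Lambda g(u_h^k)u_h^k\,dx$, as the value at $t_n$ of the continuous square-integrable stochastic-integral martingale $t\mapsto\int_0^t\bar a\,dW$, where $\bar a$ is the piecewise constant, predictable integrand equal to $a_k$ on $[t_k,t_{k+1})$. The Burkholder--Davis--Gundy inequality then gives $\erww{\max_{0\le n\le N}|R_1^n|}\le C\,\erww{\left(\Delta t\sum_{k=0}^{N-1}a_k^2\right)^{1/2}}$. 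Using $|a_k|\le2\|g(u_h^k)\|_{L^2(\Lambda)}\|u_h^k\|_{L^2(\Lambda)}$ (Cauchy--Schwarz) I can factor out the maximum, $\left(\Delta t\sum_k a_k^2\right)^{1/2}\le2\left(\max_{0\le n\le N}\|u_h^n\|_{L^2(\Lambda)}\right)\left(\Delta t\sum_k\|g(u_h^k)\|_{L^2(\Lambda)}^2\right)^{1/2}$, and a further Young inequality splits the resulting expectation into $\halbe\erww{\max_{0\le n\le N}\|u_h^n\|_{L^2(\Lambda)}^2}$ plus a constant multiple of $\Delta t\sum_k\erww{\|g(u_h^k)\|_{L^2(\Lambda)}^2}$, the latter again bounded by \eqref{210819_02}.

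It remains to absorb the term $\halbe\erww{\max_{0\le n\le N}\|u_h^n\|_{L^2(\Lambda)}^2}$ into the left-hand side. This step is legitimate precisely because this quantity is a priori finite for fixed $N$: indeed $\erww{\max_{0\le n\le N}\|u_h^n\|_{L^2(\Lambda)}^2}\le\sum_{n=0}^N\erww{\|u_h^n\|_{L^2(\Lambda)}^2}<\infty$ by Proposition~\ref{bounds}. Collecting the three bounds, which are all uniform in $h$ and $N$, produces the claimed constant $\cter{K4}$. The two delicate points are thus the application of Burkholder--Davis--Gundy to handle the hidden maximum and the verification that the absorbed term is finite, so that the subtraction is valid.
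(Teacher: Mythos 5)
Your argument is correct and follows essentially the same route as the paper's proof: testing with $u_h^{k+1}$, splitting the noise term into the martingale part $g(u_h^k)u_h^k\Delta_{k+1}W$ and the cross term, taking the maximum before the expectation, controlling the martingale part via Burkholder--Davis--Gundy, Cauchy--Schwarz and Young, and absorbing $\halbe\,\mathbb{E}[\max_n\|u_h^n\|_{L^2(\Lambda)}^2]$ into the left-hand side. The only (harmless) differences are presentational: you bound the cross term via $|\Delta_{k+1}W|^2\|g(u_h^k)\|_{L^2(\Lambda)}^2$ and the tower property where the paper uses the It\^o isometry on $\int_{t_k}^{t_{k+1}}g(u_h^k)\,dW$, and you make explicit the a priori finiteness of $\mathbb{E}[\max_n\|u_h^n\|_{L^2(\Lambda)}^2]$ that justifies the absorption, a point the paper leaves implicit.
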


\begin{proof}
For $N\in\mathbb{N}$, we choose an arbitrary $k\in\{0,\dots,N-1\}$ and an arbitrary $K\in\Tau$. Testing the implicit scheme \eqref{eq:FV_var} with $u_K^{k+1}$ yields
\begin{align*}
\sum_{K\in\Tau}\frac{m_K}{\Delta t}(u_K^{k+1}-u_K^k)u_K^{k+1}+\sum_{\sigma\in\edgesint}\frac{m_\sigma}{\dkl}|u_K^{k+1}-u_L^{k+1}|^2=\sum_{K\in\Tau}\frac{m_K}{\Delta t}g(u_K^{k})u_K^{k+1}\Delta_{k+1}W.
\end{align*}
This provides by Cauchy-Schwarz and Young inequalities
\begin{align*}
&\halbe\left(\|u_h^{k+1}\|_{L^2(\Lambda)}^2-\|u_h^k\|_\lzlambda^2+\|u_h^{k+1}-u_h^k\|_\lzlambda^2\right)\\
&\leq\left(\int_{t_k}^{t_{k+1}} g(u_h^k)dW(s), u_h^{k+1}-u_h^k\right)_{L^2(\Lambda)}+\left(\int_{t_k}^{t_{k+1}}g(u_h^k)dW(s),u_h^k\right)_\lzlambda\\
&\leq\halbe\left\|\int_{t_k}^{t_{k+1}} g(u_h^k)dW(s)\right\|_\lzlambda^2+\halbe\left\|u_h^{k+1}-u_h^k\right\|_\lzlambda^2+\left(\int_{t_k}^{t_{k+1}}g(u_h^k)dW(s),u_h^k\right)_\lzlambda.
\end{align*}
We obtain
\begin{align*}
\|u_h^{k+1}\|_\lzlambda^2-\|u_h^k\|_\lzlambda^2\leq\left\|\int_{t_k}^{t_{k+1}}g(u_h^k)dW(s)\right\|_\lzlambda^2+2\left(\int_{t_k}^{t_{k+1}}g(u_h^k)dW(s),u_h^k\right)_\lzlambda.
\end{align*}
For $n\in \{1,\dots,N\}$ fixed, we sum over $k=\{0,\dots,n-1\}$ to obtain
\begin{align*}
\|u_h^{n}\|_\lzlambda^2
\leq \sum_{k=0}^{n-1}\left\|\int_{t_k}^{t_{k+1}}g(u_h^k)dW(s)\right\|_\lzlambda^2 \!\!
+ 2\sum_{k=0}^{n-1}\left(\int_{t_k}^{t_{k+1}}g(u_h^k)dW(s),u_h^k\right)_\lzlambda \!\!
+ \|u_h^0\|_\lzlambda^2.
\end{align*}
From this, taking the maximum over $n\in \{1,\ldots, N\}$ first and then the expectation applying It\^{o} isometry it follows that
\begin{align}
\label{eqmaxuhnl}
\begin{aligned}
\mathbb{E}\Big[&\max_{n=1,\dots,N}\|u_h^n\|_\lzlambda^2\Big]\\
\leq& \erwb\sum_{k=0}^{N-1}\left\|\int_{t_k}^{t_{k+1}}g(u_h^k)dW(s)\right\|_\lzlambda^2 \erwe \\ 
&+ 2\erwb\max_{n=1,\dots,N}\sum_{k=0}^{n-1}\left(\int_{t_k}^{t_{k+1}}g(u_h^k)dW(s),u_h^k\right)_\lzlambda \erwe + \erwb\|u_h^0\|_\lzlambda^2\erwe\\
\leq& \sum_{k=0}^{N-1}\int_{t_k}^{t_{k+1}}\erwb\|g(u_h^k)\|_\lzlambda^2\erwe ds \\
&+2\erwb\max_{n=1,\dots,N}\sum_{k=0}^{n-1}\int_{t_k}^{t_{k+1}}(g(u_h^k),u_h^k)_\lzlambda dW(s)\erwe+\erww{\|u_h^0\|_\lzlambda^2}.
\end{aligned}
\end{align}
We can estimate the second term by the Burkholder-Davis-Gundy inequality
\begin{align*}
&2\erwb\max_{n=1,\dots,N}\sum_{k=0}^{n-1}\int_{t_k}^{t_{k+1}}(g(u_h^k),u_h^k)_\lzlambda dW(s)\erwe\\
&\leq 2\erwb\sup_{t\in[0,T]}\left|\int_0^t(g(\uhnl(s)),\uhnl(s))_\lzlambda dW(s)\right|\erwe\\
&\leq 2 C_B\erwb\left(\int_0^T|(g(\uhnl(s)),\uhnl(s))_\lzlambda|^2ds\right)^\halbe\erwe.
\end{align*}
Now we apply Cauchy-Schwarz and Young inequalities (with $\alpha>0$), $H_2$ and \eqref{H3} to estimate
\begin{align*}
&2 C_B\erwb\left(\int_0^T|(g(\uhnl(s)),\uhnl(s))_\lzlambda|^2ds\right)^\halbe\erwe\\
&\leq 2 C_B\erwb\left(\sup_{t\in[0,T]}\|\uhnl(t)\|_\lzlambda^2\int_0^T\|g(\uhnl(s))\|_\lzlambda^2 ds\right)^\halbe\erwe\\
&\leq 2C_B\erwb\frac{\alpha}{2}\sup_{t\in[0,T]}\|\uhnl(t)\|_\lzlambda^2+\frac{1}{2\alpha}\int_0^T\|g(\uhnl(s))\|_\lzlambda^2 ds \erwe\\
&\leq C_B\alpha\erwb\max_{n=1,\dots,N}\|u_h^n\|_\lzlambda^2\erwe+\frac{C_BC_L}{\alpha}\left(T|\Lambda|+\erwb\int_0^T\|\uhnl(s)\|_\lzlambda^2 ds\erwe\right).
\end{align*}
Plugging the above estimate in \eqref{eqmaxuhnl} and again using $H_2$ with \eqref{H3}, we arrive at
\begin{align*}
\erwb\max_{n=1,\dots,N}\|u_h^n\|_\lzlambda^2\erwe&\leq C_B\alpha\erwb\max_{n=1,\dots,N}\|u_h^n\|_\lzlambda^2\erwe+\erww{\|u_h^0\|_\lzlambda^2}\\
&+C_L\left(\frac{C_B}{\alpha}+1\right)\int_{0}^{T}\erwb\|\uhnl(s)\|_\lzlambda^2\erwe ds\\
&+C_L|\Lambda|T\left(\frac{C_B}{\alpha}+1\right).
\end{align*}
Choosing $\alpha>0$ such that $1-C_B\alpha>0$, we find a constant $C(\alpha,L)>0$ such that
\begin{align*}
\erwb\max_{n=1,\dots,N}\|u_h^n\|_\lzlambda^2\erwe\leq C(\alpha,L)\left(\int_0^T\erww{\|\uhnl(s)\|_\lzlambda^2}\,ds+\erww{\|u_h^0\|_\lzlambda^2}+1\right).
\end{align*}
Now, the assertion follows by Lemmas~\ref{bound_u0} and \ref{210611_lem01}.
\end{proof}

\subsection{Time and space translate estimates}

For the stochastic compactness argument in Subsection \ref{S1}, we need a uniform bound on $(u_{h,N}^l)_{h,N}$ in the spaces $L^2(\Omega;L^2(0,T;W^{\alpha,2}(\Lambda)))$ and $L^2(\Omega;W^{\alpha,2}(0,T;L^2(\Lambda)))$ for $\alpha\in (0,\frac{1}{2})$. In order to prove the bound in $L^2(\Omega;L^2(0,T;W^{\alpha,2}(\Lambda)))$, we establish a uniform estimate on the space translates of $(u_{h,N}^l)_{h,N}$ in Lemma \ref{210112_lem01}. The proof of the bound in $L^2(\Omega;W^{\alpha,2}(0,T;L^2(\Lambda)))$ is more complicated.
To do this, we introduce the following intermediate quantity. For any $(t,x)\in[0,T]\times \Lambda$, we define
\begin{equation}
 \label{eq:defM}
M_{h,N}(t,x):=\int_0^tg(\uhnl(s,x))dW(s).
\end{equation}
Then, Lemma \ref{lemzeittranslate} is a technical result for the proof of Lemma \ref{lemdefMhN}, where we show a uniform estimate on time translates of $(u_{h,N}^l-M_{h,N})_{h,N}$. Thanks to Lemma \ref{lemdefMhN}, we may conclude a uniform bound on $(u_{h,N}^l-M_{h,N})_{h,N}$ in $L^2(\Omega;W^{\alpha,2}(0,T;L^2(\Lambda)))$ in Lemma \ref{uhnlMhNbound}. Then the desired bound on $(u_{h,N}^l)_{h,N}$ is obtained in Lemma \ref{timeboundedness} by using the additional information that $(M_{h,N})_{h,N}$ is bounded in $L^2(\Omega;W^{\alpha,2}(0,T;L^2(\Lambda)))$.\\

We start by giving an estimation of the space translate. We do not give the proof here as it is similar to the one given in~\cite[Theorem 10.3]{gal}.
\begin{lem}\label{210112_lem01}
Let $\buhnl$ be $d\mathds{P}\otimes dt\otimes dx$-a.s. defined by $\buhnl=\uhnl$ on $\Omega\times (0,T)\times \Lambda$ and $\buhnl=0$ on $\Omega\times(\mathbb{R}^3\setminus ((0,T)\times\Lambda))$.
Then there exists a constant $C\geq 0$, only depending on $\Lambda$, such that for all $\eta\in\re^2$ and almost every $t\in[0,T]$ and $\mathds{P}$-a.s in $\Omega$
\begin{align*}
\int_{\re^2}|\buhnl(t,x+\eta)-\buhnl(t,x)|^2 dx\leq C|\eta|
\left(|\uhnl(t)|_{1,h}^2+\|\uhnl(t)\|_{L^2(\Lambda)}^2\right).
\end{align*}
\end{lem}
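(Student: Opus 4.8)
Since the right-hand side depends only on the spatial reconstruction at the fixed time $t$, I would fix $t\in[0,T]$ and work $\mathds{P}$-almost surely, abbreviating $u\equiv\uhnl(t)\equiv(u_K)_{K\in\Tau}$ and $\bar u\equiv\buhnl(t)$, so that $\bar u$ is the extension by $0$ of the piecewise constant function $u$ from $\Lambda$ to $\re^2$. The plan is to reproduce the classical finite-volume translate estimate of \cite[Theorem 10.3]{gal}, the only genuinely new feature being the bookkeeping of the boundary jumps created by the zero-extension under Neumann conditions, which is precisely what produces the $\|\uhnl(t)\|_{L^2(\Lambda)}^2$ term.

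The first step is a pointwise edge-crossing representation. For $x,\eta\in\re^2$ and an edge $\sigma\in\edges$, set $\chi_\sigma(x,\eta)=1$ if the segment $[x,x+\eta]$ meets $\sigma$ and $\chi_\sigma(x,\eta)=0$ otherwise. As $\bar u$ is constant on each control volume and vanishes outside $\Lambda$, its value can change along $[x,x+\eta]$ only when the segment crosses an edge, so that
\[
|\bar u(x+\eta)-\bar u(x)|\leq\sum_{\sigma\in\edgesint}|u_K-u_L|\,\chi_\sigma(x,\eta)+\sum_{\sigma\in\edgesext}|u_K|\,\chi_\sigma(x,\eta).
\]
I would then square, use $(a+b)^2\leq 2a^2+2b^2$ to separate the interior and boundary sums, and integrate over $x\in\re^2$. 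Two elementary geometric facts are used: the swept-volume identity $\int_{\re^2}\chi_\sigma(x,\eta)\,dx=m_\sigma\,|\eta\cdot\mathbf{n}_{KL}|\leq m_\sigma|\eta|$, and, after a weighted discrete Cauchy--Schwarz inequality with weights $c_\sigma:=|\eta\cdot\mathbf{n}_{KL}|\,\dkl$, the pointwise bound $\sum_{\sigma}c_\sigma\chi_\sigma(x,\eta)\leq|\eta|(|\eta|+2h)$, which follows from the orthogonality of the mesh by telescoping the projections of the centers of the crossed control volumes onto the direction $\eta/|\eta|$.

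Combining these, the interior sum contributes at most $|\eta|(|\eta|+2h)\sum_{\sigma\in\edgesint}\frac{m_\sigma}{\dkl}|u_K-u_L|^2=|\eta|(|\eta|+2h)\,|\uhnl(t)|_{1,h}^2$, which for $|\eta|\leq 1$ and $h\leq 1$ is bounded by $3|\eta|\,|\uhnl(t)|_{1,h}^2$; the complementary case $|\eta|\geq 1$ is immediate from the crude bound $\int_{\re^2}|\bar u(x+\eta)-\bar u(x)|^2\,dx\leq 4\|\uhnl(t)\|_{L^2(\Lambda)}^2\leq 4|\eta|\,\|\uhnl(t)\|_{L^2(\Lambda)}^2$, which already explains why the $L^2$-term is necessary. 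For the boundary sum I would bound its integral by the boundary-layer integral
\[
\int_{\{x\in\Lambda:\operatorname{dist}(x,\partial\Lambda)\leq|\eta|\}}|u(x)|^2\,dx,
\]
and control the latter, through a discrete trace inequality for piecewise constant functions together with the polygonal geometry of $\Lambda$, by $C|\eta|\big(\|\uhnl(t)\|_{L^2(\Lambda)}^2+|\uhnl(t)|_{1,h}^2\big)$.

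I expect the boundary term to be the main obstacle. Unlike the interior estimate, whose constant is universal, the jumps $|u_K|$ across $\edgesext$ are not measured by the Neumann seminorm $|\cdot|_{1,h}$, so converting the boundary-layer integral into the stated right-hand side is exactly the delicate point and forces the appearance of $\|\uhnl(t)\|_{L^2(\Lambda)}^2$. Once the interior and boundary contributions are collected and $h\leq 1$ is absorbed into the constant, combining the two regimes $|\eta|<1$ and $|\eta|\geq 1$ yields the asserted inequality with $C$ depending only on $\Lambda$, exactly as carried out in \cite[Theorem 10.3]{gal}.
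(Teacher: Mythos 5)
Your proposal follows essentially the same route as the paper, which does not write the proof out but refers to the classical translate estimate of \cite[Theorem 10.3]{gal}: the edge-crossing decomposition, the swept-volume identity $\int\chi_\sigma(\cdot,\eta)\,dx=m_\sigma|\eta\cdot\mathbf{n}_{KL}|$, the weighted Cauchy--Schwarz inequality with the telescoping bound on $\sum_\sigma c_\sigma\chi_\sigma$, the crude $4\|\uhnl(t)\|_{L^2(\Lambda)}^2$ bound for $|\eta|\geq 1$, and a discrete trace inequality for the exterior edges. One intermediate step should be rephrased: after Cauchy--Schwarz (using that a segment meets the polygonal boundary $\partial\Lambda$ in a number of points bounded in terms of $\Lambda$ alone) and the swept-volume identity, the exterior-edge contribution is the discrete boundary quantity $C_\Lambda|\eta|\sum_{\sigma\in\edgesext}m_\sigma|u_K|^2$, not the continuous boundary-layer integral $\int_{\{\operatorname{dist}(x,\partial\Lambda)\leq|\eta|\}}|u|^2\,dx$, which need not dominate it (e.g.\ when $|\eta|$ exceeds the thickness of a boundary cell, the layer integral over that cell saturates at $m_K|u_K|^2$ while the discrete term keeps growing like $m_\sigma|\eta|\,|u_K|^2$); applying the discrete trace inequality directly to $\sum_{\sigma\in\edgesext}m_\sigma|u_K|^2$ yields the asserted bound and makes the detour unnecessary.
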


\begin{lem}\label{lemzeittranslate}
There exists a constant $\ctel{K3}>0$, independent of the discretization parameters $N\in\mathbb{N}^{\ast}$ and $h$, such that for all $\tau\in (0, T)$ there holds
\begin{align}\label{zeittranslate}
\erwb \int_0^{T-\tau}\left\|\uhnl (t+\tau)- M_{h,N}^l(t+\tau)-(\uhnl(t)-M_{h,N}^l(t))\right\|_{L^2(\Lambda)}^2 dt\erwe\leq \cter{K3} \tau,
\end{align}
where $M_{h,N}^l$ is defined for any $(t,x)\in [0,T]\times\Lambda$ by
\begin{align*}
M_{h,N}^l(t,x):=\int_0^{t_n} g(\uhnl(s,x))\, dW(s)\text{ if } t\in[t_n, t_{n+1}) \text{ with } n\in\{0,\ldots, N-1\}.
\end{align*}
\end{lem}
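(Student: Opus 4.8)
The plan is to reduce the time translate of $w:=\uhnl-M_{h,N}^l$ to the increments of the underlying vectors and to control these through the scheme. Writing $w^n\equiv u_h^n-\sum_{k=0}^{n-1}g(u_h^k)\Delta_{k+1}W$ for the value taken by $w$ on $[t_n,t_{n+1})$, the definition of $M_{h,N}^l$ gives $w^{n+1}-w^n=(u_h^{n+1}-u_h^n)-g(u_h^n)\Delta_{n+1}W$, so that testing the scheme \eqref{eq:FV_var} against an arbitrary $\phi\equiv(\phi_K)_{K\in\Tau}\in\re^{d_h}$ yields the key identity
\[
\sum_{K\in\Tau}m_K(w_K^{n+1}-w_K^n)\phi_K=-\Delta t\,[u_h^{n+1},\phi]_{1,h},\qquad [v,\phi]_{1,h}:=\sum_{\sigma\in\edgesint}\frac{m_\sigma}{\dkl}(v_K-v_L)(\phi_K-\phi_L),
\]
where $[v,v]_{1,h}=|v|_{1,h}^2$ and $[v,\phi]_{1,h}\le|v|_{1,h}\,|\phi|_{1,h}$. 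For fixed $\tau$ I would use the telescoping decomposition $w(t+\tau)-w(t)=\sum_{n\in S(t)}(w^{n+1}-w^n)$ with $S(t):=\{n:\ t<t_{n+1}\le t+\tau\}$, together with the bookkeeping facts $\#S(t)\le \tau/\Delta t+1$ and $\lvert\{t\in[0,T-\tau]:\ n\in S(t)\}\rvert\le\tau$ for each fixed $n$.

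In the main regime $\tau\ge\Delta t$ I pair the difference with itself and use the identity above to get the pointwise bound $\|w(t+\tau)-w(t)\|_{L^2(\Lambda)}^2\le \Delta t\,|w(t+\tau)-w(t)|_{1,h}\sum_{n\in S(t)}|u_h^{n+1}|_{1,h}$. Integrating over $(0,T-\tau)$, taking expectation and applying the Cauchy--Schwarz inequality on $\Omega\times(0,T-\tau)$, the left-hand side of \eqref{zeittranslate} is bounded by
\[
\Delta t\Big(\erww{\textstyle\int_0^{T-\tau}|w(t+\tau)-w(t)|_{1,h}^2\,dt}\Big)^{1/2}\Big(\erww{\textstyle\int_0^{T-\tau}\big(\sum_{n\in S(t)}|u_h^{n+1}|_{1,h}\big)^2 dt}\Big)^{1/2}.
\]
In the second factor, Cauchy--Schwarz in $n$ with $\#S(t)\le\tau/\Delta t+1$ and the interval count give a bound by $(\tau/\Delta t+1)\tau\,\erww{\sum_n|u_h^{n+1}|_{1,h}^2}\le C_1\tau(\tau+\Delta t)/(2\Delta t^2)$, using Proposition~\ref{bounds}. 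The first factor is estimated by $4\,\erww{\int_0^T|w(t)|_{1,h}^2\,dt}$. Multiplying out, the explicit $\Delta t$ cancels and one is left with a bound of order $\sqrt{\tau(\tau+\Delta t)}$, which is of order $\tau$ precisely because $\tau\ge\Delta t$.

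The decisive auxiliary estimate is the uniform bound $\erww{\int_0^T|w(t)|_{1,h}^2\,dt}\le C$, and I expect its stochastic part to be the main obstacle. Splitting $w=\uhnl-M_{h,N}^l$ and using $|w|_{1,h}^2\le2|\uhnl|_{1,h}^2+2|M_{h,N}^l|_{1,h}^2$, the first summand is handled by \eqref{210611_01}. For the second I would expand, for $t\in[t_n,t_{n+1})$,
\[
\erww{|M_{h,N}^l(t)|_{1,h}^2}=\sum_{\sigma\in\edgesint}\frac{m_\sigma}{\dkl}\,\erww{\Big|\sum_{k=0}^{n-1}\big(g(u_K^k)-g(u_L^k)\big)\Delta_{k+1}W\Big|^2},
\]
and exploit that the increments $\Delta_{k+1}W$ are independent, centred and independent of the $\mathcal F_{t_k}$-measurable coefficients: the cross terms vanish and the It\^o isometry leaves $\Delta t\sum_{k=0}^{n-1}\erww{\sum_\sigma\frac{m_\sigma}{\dkl}|g(u_K^k)-g(u_L^k)|^2}$, which by the Lipschitz property of $g$ is $\le L^2\Delta t\sum_{k}\erww{|u_h^k|_{1,h}^2}\le L^2\cter{K2}$, uniformly in $t$, $h$ and $N$. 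This is exactly where the martingale structure is essential: a pathwise triangle inequality on the $n$ summands would accumulate an unbounded factor, whereas the orthogonality of the stochastic increments keeps the seminorm of $M_{h,N}^l$ controlled.

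Finally, in the complementary regime $\tau<\Delta t$ the interval $(t,t+\tau]$ contains at most one grid point, so $w(t+\tau)-w(t)$ is either zero or a single increment $w^{n+1}-w^n$, and the interval count bounds the left-hand side of \eqref{zeittranslate} directly by $\tau\sum_n\erww{\|w^{n+1}-w^n\|_{L^2(\Lambda)}^2}$. Here no energy identity is needed: by the triangle inequality $\|w^{n+1}-w^n\|_{L^2(\Lambda)}^2\le2\|u_h^{n+1}-u_h^n\|_{L^2(\Lambda)}^2+2\|g(u_h^n)\|_{L^2(\Lambda)}^2(\Delta_{n+1}W)^2$, and after summation and expectation the first part is bounded by $2C_1$ via Proposition~\ref{bounds} while the second, by independence of $g(u_h^n)$ and $\Delta_{n+1}W$ and estimate \eqref{210819_02}, is bounded by a constant independent of $h$ and $N$; hence $\sum_n\erww{\|w^{n+1}-w^n\|_{L^2(\Lambda)}^2}\le C$ and the left-hand side of \eqref{zeittranslate} is at most $C\tau$. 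Combining the two regimes gives \eqref{zeittranslate} with a constant $\cter{K3}$ independent of the discretization parameters.
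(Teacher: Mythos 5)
Your proof is correct, but it is organized differently from the paper's. Both arguments start identically: telescope $w(t+\tau)-w(t)=\sum_{n}(w^{n+1}-w^n)$ over the grid points in $(t,t+\tau]$, substitute the scheme to replace each increment by $-\Delta t$ times a discrete Laplacian of $u_h^{n+1}$, and pass to edge sums by discrete integration by parts. They diverge at the next step. The paper splits the tested difference $\varphi^{n_1(t)}-\varphi^{n_0(t)}$ back into its $u$-part and its stochastic-integral part, applies Young's inequality termwise, and controls the resulting pieces by the interval-counting identity \eqref{chi} (and the counting argument of \cite[Lemma 6.2]{GHL}), the seminorm bounds of Lemma \ref{remarkuhnrboundintomega}, and the It\^o isometry applied to the single increment $\int_{n_0(t)\Delta t}^{n_1(t)\Delta t}(g(u_K^{N,l})-g(u_L^{N,l}))\,dW$; this yields $O(\tau)$ directly for every $\tau\in(0,T)$ with no case distinction. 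You instead keep $|w(t+\tau)-w(t)|_{1,h}$ intact and perform one global Cauchy--Schwarz over $\Omega\times(0,T-\tau)$, which forces two additional ingredients: a uniform bound on $\erww{\int_0^T|M_{h,N}^l(t)|_{1,h}^2\,dt}$, which the paper never states but which you correctly derive from the orthogonality of the martingale increments, the Lipschitz property of $g$ and \eqref{210611_01}; and a case split between $\tau\geq\Delta t$ and $\tau<\Delta t$, since your route only gives $O(\sqrt{\tau(\tau+\Delta t)})$, with the small-$\tau$ regime handled by the elementary increment bounds of Proposition \ref{bounds} and \eqref{210819_02}. Both routes use the same underlying estimates; the paper's avoids the dichotomy in $\tau$, while yours isolates a clean, reusable fact (the discrete $H^1$-seminorm of the discrete stochastic integral is uniformly bounded) and makes explicit that pathwise summation of the noise increments would fail where martingale orthogonality succeeds.
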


\begin{proof}
Let $\tau\in(0,T)$ be fixed. In the following, we set $\varphi_h^N(t,x):=\uhnl(t,x)-M_{h,N}^l(t,x)$ for $t\in[0,T], x\in\Lambda$. Note that by definition $M_K^n:=M_{h,N}^l(t_n,x_K)$ and $\varphi_K^n:=u_K^n-M_K^n $ for $n\in\{0,\dots,N\},K\in\Tau$. 
For $t\in(0,T-\tau)$ let $n_0(t),n_1(t)\in\{0,\dots,N-1\}$ be the unique nonnegative integer satisfying 
\begin{equation*}
n_0(t)\Delta t\leq t <(n_0(t)+1)\Delta t
\quad \text{and} \quad
n_1(t)\Delta t\leq t+\tau <(n_1(t)+1)\Delta t.
\end{equation*}
There holds $\mathds{P}$-a.s in $\Omega$
\begin{align*}
&\int_0^{T-\tau}\|\uhnl (t+\tau)- M_{h,N}^l(t+\tau)-(\uhnl(t)-M_{h,N}^l(t))\|_{L^2(\Lambda)}^2 dt\\
&=\int_0^{T-\tau}\sum_{K\in\Tau} m_K|\varphi_K^{n_1(t)}-\varphi_K^{n_0(t)}|^2 dt=:\int_0^{T-\tau} A(t)dt.
\end{align*}
Since $\tau>0$, we necessarily have $n_0(t)\leq n_1(t)$. If $n_0(t)=n_1(t)$ holds, we have $A(t)=0$. So we only consider $t\in(0,T-\tau)$ with $n_1(t)>n_0(t)$. Using the notation $\chi_{n+1}(t,t+\tau)=1$ if $(n+1)\Delta t\in[t,t+\tau)$ and $\chi_{n+1}(t,t+\tau)=0$ otherwise for $n=0,\ldots,N-1$, we get
\begin{align*}
A(t)
&=\sum_{K\in\Tau} m_K(\varphi_K^{n_1(t)}-\varphi_K^{n_0(t)})(\varphi_K^{n_1(t)}-\varphi_K^{n_0(t)})\\
&=\sum_{K\in\Tau}m_K(\varphi_K^{n_1(t)}-\varphi_K^{n_0(t)})\sum_{n=n_0(t)}^{n_1(t)-1}(\varphi_K^{n+1}-\varphi_K^n)\\
&=\sum_{K\in\Tau} m_K(\varphi_K^{n_1(t)}-\varphi_K^{n_0(t)})\sum_{n=0}^{N-1}\chi_{n+1}(t,t+\tau)(\varphi_K^{n+1}-\varphi_K^n)\\
&=\sum_{n=0}^{N-1}\chi_{n+1}(t,t+\tau)\sum_{K\in\Tau}(\varphi_K^{n_1(t)}-\varphi_K^{n_0(t)})m_K(\varphi_K^{n+1}-\varphi_K^n).
\end{align*}
Using \eqref{equationapprox}, we obtain
\begin{align*}
A(t)=-\Delta t\sum_{n=0}^{N-1}\chi_{n+1}(t,t+\tau)\sum_{K\in\Tau}(\varphi_K^{n_1(t)}-\varphi_K^{n_0(t)})\sum_{\sigma\in\edgesint\cap\edges_K}\frac{m_\sigma}{\dkl}(u_K^{n+1}-u_L^{n+1}).
\end{align*}
Rearranging the sum in the same way as for discrete partial integration (see Remark~\ref{discrpartint}), using the definition of $\varphi_{h}^N$ and the notation $u_K^{N,l}:=\uhnl(x_K)$ for $K\in\Tau$ one obtains
\begin{align*}
A(t)&=-\Delta t\sum_{n=0}^{N-1}\chi_{n+1}(t,t+\tau)\sum_{\sigma\in\edgesint}\frac{m_\sigma}{\dkl}(u_K^{n+1}-u_L^{n+1})\left(\varphi_K^\net-\varphi_L^\net-(\varphi_K^\nnt-\varphi_L^\nnt)\right)\\
&=-\Delta t\sum_{n=0}^{N-1}\chi_{n+1}(t,t+\tau)\sum_{\sigma\in\edgesint}\frac{m_\sigma}{\dkl}(u_K^{n+1}-u_L^{n+1})[u_K^\net-u_L^\net-u_K^\nnt+u_L^\nnt]\\
&\quad+\Delta t\sum_{n=0}^{N-1}\chi_{n+1}(t,t+\tau)\sum_{\sigma\in\edgesint}\frac{m_\sigma}{\dkl}(u_K^{n+1}-u_L^{n+1})\int_{\nnt\Delta t}^{\net\Delta t} \left(g(u_K^{N,l})-g(u_L^{N,l})\right) dW(s)\\
&=: A_1(t)+A_2(t),
\end{align*}
By Cauchy-Schwarz and Young inequalities we get
\begin{align*}
A_1(t)
&\leq\frac{\Delta t}{2}\sum_{n=0}^{N-1}\chi_{n+1}(t,t+\tau)|u_h^{n+1}|_{1,h}^2+\frac{\Delta t}{2}\sum_{n=0}^{N-1}\chi_{n+1}(t,t+\tau)|u_h^{n_1(t)}-u_h^{n_0(t)}|_{1,h}^2\\
&\leq \frac{\Delta t}{2}\sum_{n=0}^{N-1}\chi_{n+1}(t,t+\tau)|u_h^{n+1}|_{1,h}^2+\Delta t\sum_{n=0}^{N-1}\chi_{n+1}(t,t+\tau)(|u_h^{n_1(t)}|^2_{1,h}+|u_h^{n_0(t)}|_{1,h}^2).
\end{align*}
Consequently,
\begin{align*}
\erwb\int_0^{T-\tau} A_1(t)\,dt\erwe\leq I_1+I_2
\end{align*}
where
\begin{align*}
I_1&=\frac{1}{2}\int_0^{T-\tau}\sum_{n=0}^{N-1}\chi_{n+1}(t,t+\tau)\erwb\Delta t|u^{n+1}_h|^2_{1,h}\erwe\,dt
\end{align*}
and
\begin{align*}
I_2=\int_0^{T-\tau}\sum_{n=0}^{N-1}\chi_{n+1}(t,t+\tau)\Delta t\erwb|u^{n_0(t)}_h|^2_{1,h}+|u^{n_1(t)}_h|^2_{1,h}\erwe\,dt.
\end{align*}
Since
\begin{align*}
\chi_{n+1}(t,t+\tau)=1\quad&\Leftrightarrow\quad (n+1)\Delta t\in[t,t+\tau)\\
&\Leftrightarrow\quad t-\tau\leq (n+1)\Delta t-\tau< t \leq (n+1)\Delta t,
\end{align*}
there arises
\begin{align}\label{chi}
\int_0^{T-\tau}\chi_{n+1}(t,t+\tau)dt=\int_{(n+1)\Delta t-\tau}^{(n+1)\Delta t}1\,dt=\tau.
\end{align}
Using this and \eqref{uhnrboundinotomega}, we have
\begin{align*}
I_1=\frac{\tau}{2}\erwb\int_0^T|u_{h,N}^{r}(s)|^2_{1,h}\,ds\erwe\leq \frac{\cter{K1}\tau}{2}.
\end{align*}
Now we write $I_2=I_{2,1}+I_{2,2}$ where, 
\begin{align*}
I_{2,1}&=\int_0^{T-\tau}\sum_{n=0}^{N-1}\chi_{n+1}(t,t+\tau)\Delta t\erwb|u^{n_0(t)}_h|^2_{1,h}\erwe\,dt,\\
I_{2,2}&=\int_0^{T-\tau}\sum_{n=0}^{N-1}\chi_{n+1}(t,t+\tau)\Delta t\erwb|u^{n_1(t)}_h|^2_{1,h}\erwe\,dt.
\end{align*}
We note that, for any $m\in \{0,\ldots, N-1\}$, if $t\in [t_m,t_{m+1})$ then the definition of $n_0$ implies $n_0(t)=m$ and therefore
\begin{align*}
I_{2,1}
&\leq\sum_{m=0}^{N-1}\left(\int_{t_m}^{t_{m+1}}\sum_{n=0}^{N-1}\chi_{n+1}(t,t+\tau)\,dt\right)\Delta t\erwb|u^{m}_h|^2_{1,h}\erwe.
\end{align*}
Now, we proceed as in \cite[Lemma 6.2]{GHL}. For the convenience of the reader we include the derivation of the formula using the notation of this paper. For all $m \in \{0,\ldots,N-1\}$ we have
\begin{align*}
\int_{t_m}^{t_{m+1}}\sum_{n=0}^{N-1}\chi_{n+1}(t,t+\tau)\,dt=
\sum_{n=0}^{N-1}\int_{t_m-t_{n+1}}^{t_{m+1}-t_{n+1}}
\chi_{n+1}(t+t_{n+1},t+t_{n+1}+\tau)\,dt.
\end{align*}
Now,
\begin{align*}
\chi_{n+1}(t+t_{n+1},t+t_{n+1}+\tau)=1&\Leftrightarrow \ (n+1)\Delta t=t_{n+1}\in [t+t_{n+1},t+t_{n+1}+\tau)\\
&\Leftrightarrow\ t\in(-\tau,0].
\end{align*}
Hence,
\begin{align*}
\int_{t_m}^{t_{m+1}}\sum_{n=0}^{N-1}\chi_{n+1}(t,t+\tau)\,dt\leq \int_{\mathbb{R}}\mathds{1}_{(-\tau,0]}(t)\,dt=\tau 
\quad \forall m \in \{0,\ldots,N-1\}.
\end{align*}
Therefore, thanks to Lemma~\ref{remarkuhnrboundintomega} one has
\begin{align*}
I_{2,1}\leq \tau\Delta t\sum_{m=0}^{N-1}\erwb|u^{m}_h|^2_{1,h}\erwe
=\tau \int_0^T \erwb|u_{h,N}^l(t)|^2_{1,h}\erwe dt\leq \cter{K2}\tau.
\end{align*}
Analogously, for any $m\in\{0,\ldots,N-1\}$, if $t\in [t_m-\tau,t_{m+1}-\tau)$, then the definition of $n_1$ implies $n_1(t)=m$ and therefore
\begin{align*}
I_{2,2}\leq \sum_{m=0}^{N-1}\Delta t \erwb|u^{m}_h|^2_{1,h}\erwe\int_{t_m-\tau}^{t_{m+1}-\tau}\sum_{n=0}^{N-1}\chi_{n+1}(t,t+\tau)\,dt\leq \cter{K2}\tau
\end{align*}
by \cite[Lemma 6.2]{GHL} and Lemma \ref{remarkuhnrboundintomega}, where $\chi_{n+1}(t,t+\tau)=0$ for $t<0$.
Combining the previous estimates we arrive at
\begin{align}\label{210611_03}
\erwb\int_0^{T-\tau} A_1(t)\,dt\erwe\leq \left(\frac{\cter{K1}}{2}+2\cter{K2}\right)\tau.
\end{align}

Now we consider $A_2$. Applying Young's inequality we find
\begin{align*}
A_2(t)
&\leq\frac{\Delta t}{2}\sum_{n=0}^{N-1}\chi_{n+1}(t,t+\tau)\sum_{\sigma\in\edgesint}\frac{m_\sigma}{\dkl}(u_K^{n+1}-u_L^{n+1})^2\\
&\quad+\frac{\Delta t}{2}\sum_{n=0}^{N-1}\chi_{n+1}(t,t+\tau)\sum_{\sigma\in\edgesint}\frac{m_\sigma}{\dkl}\left|\int_{\nnt\Delta t}^{\net\Delta t} 
\left( g(u_K^{N,l})-g(u_L^{N,l}) \right) dW(s)\right|^2\\
&=:A_{2,1}(t)+A_{2,2}(t).
\end{align*}
We have
\begin{align*}
\erwb \int_0^{T-\tau}A_{2,1}(t) dt\erwe
&=\frac{\Delta t}{2}\erwb\sum_{n=0}^{N-1}|u_h^{n+1}|_{1,h}^2\int_0^{T-\tau}\chi_{n+1}(t,t+\tau) dt\erwe.
\end{align*}
By \eqref{chi} and Lemma \ref{remarkuhnrboundintomega} we may conclude
\begin{align}\label{estimatea11}
\erwb\int_0^{T-\tau} A_{2,1}(t)dt\erwe=\frac{\tau}{2}\erwb\int_0^T|\uhnr(s)|_{1,h}^2 \,ds\erwe\leq \frac{\cter{K1}\tau}{2}.
\end{align}
For the study of the term $A_{2,2}$ we recall the notation $u_K^{N,l}:=\uhnl(x_K)$ for $K\in\Tau$. From the Itô isometry it follows that for any $t\in (0,T-\tau)$ with $n_0(t)<n_1(t)$,
\begin{align*}
\erwb\left|\int_{\nnt\Delta t}^{\net\Delta t} 
\left( g(u_K^{N,l})-g(u_L^{N,l}) \right) \,dW(s)\right|^2 \erwe
&\leq \erwb\int_0^T|g(u_K^{N,l})-g(u_L^{N,l})|^2 \,ds\erwe.
\end{align*}
Therefore,
\begin{align*}
&\erwb\int_0^{T-\tau} A_{2,2}(t) dt\erwe\\
&=\frac{\Delta t}{2}\int_0^{T-\tau}\sum_{n=0}^{N-1}\chi_{n+1}(t,t+\tau)\sum_{\sigma\in\edgesint}\frac{m_\sigma}{\dkl}\erwb\left|\int_{\nnt\Delta t}^{\net\Delta t} \left(g(u_K^{N,l})-g(u_L^{N,l})\right) dW(s)\right|^2 \erwe dt\\
&\leq \frac{\Delta t}{2}\int_0^{T-\tau}\sum_{n=0}^{N-1}\chi_{n+1}(t,t+\tau)\sum_{\sigma\in\edgesint}\frac{m_\sigma}{\dkl}\erwb\int_0^T|g(u_K^{N,l})-g(u_L^{N,l})|^2 ds\erwe dt\\
&\leq L^2\frac{\Delta t}{2}\int_0^{T-\tau}\sum_{n=0}^{N-1}\chi_{n+1}(t,t+\tau)\sum_{\sigma\in\edgesint}\frac{m_\sigma}{\dkl}\erwb\int_0^T|u_K^{N,l}-u_L^{N,l}|^2 ds\erwe dt\\
&=L^2\frac{\Delta t}{2}\int_0^{T-\tau}\sum_{n=0}^{N-1}\chi_{n+1}(t,t+\tau)\,dt\int_0^T\erww{|\uhnl(s)|_{1,h}^2}\,ds.
\end{align*}
Because of \eqref{chi} there holds
\begin{align*}
\int_0^{T-\tau}\sum_{n=0}^{N-1}\chi_{n+1}(t,t+\tau)dt=\sum_{n=0}^{N-1}\tau=N\tau.
\end{align*}
Therefore from \eqref{210611_01} it follows
\begin{align}\label{estimatea12}
&\erwb\int_0^{T-\tau} A_{2,2}(t)dt\erwe
\leq \halbe L^2T\tau \cter{K2}.
\end{align}
Finally \eqref{zeittranslate} holds from \eqref{210611_03}, \eqref{estimatea11} and \eqref{estimatea12}.
\end{proof}

\begin{lem}\label{lemdefMhN}
There exists a constant $\ctel{K5}>0$, independent of the discretization parameters $N\in\mathbb{N}^{\ast}$ and $h$, such that for all $\tau \in (0,T)$ there holds
\begin{align}
\erww{\int_0^{T-\tau}\|\uhnl(t+\tau)-M_{h,N}(t+\tau)-(\uhnl(t)-M_{h,N}(t))\|_\lzlambda^2dt}\leq \cter{K5}\tau. \label{bmhn}
\end{align}
\end{lem}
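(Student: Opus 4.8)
The plan is to reduce the statement to Lemma~\ref{lemzeittranslate} by comparing the continuous-time stochastic integral $M_{h,N}$ from~\eqref{eq:defM} with its piecewise-constant-in-time counterpart $M_{h,N}^l$. Setting $R_{h,N}:=M_{h,N}-M_{h,N}^l$, I would write
\begin{align*}
\uhnl(t+\tau)-M_{h,N}(t+\tau)-(\uhnl(t)-M_{h,N}(t))
&=\big(\uhnl(t+\tau)-M_{h,N}^l(t+\tau)-(\uhnl(t)-M_{h,N}^l(t))\big)\\
&\quad-\big(R_{h,N}(t+\tau)-R_{h,N}(t)\big),
\end{align*}
so that, after applying $\|a-b\|_\lzlambda^2\leq 2\|a\|_\lzlambda^2+2\|b\|_\lzlambda^2$, taking the space–time integral and the expectation, the first contribution is bounded by $2\cter{K3}\tau$ directly from Lemma~\ref{lemzeittranslate}. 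It then remains to control $\erww{\int_0^{T-\tau}\|R_{h,N}(t+\tau)-R_{h,N}(t)\|_\lzlambda^2\,dt}$ by a constant times $\tau$.

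For the remainder I would split $R_{h,N}(t+\tau)-R_{h,N}(t)=(M_{h,N}(t+\tau)-M_{h,N}(t))-(M_{h,N}^l(t+\tau)-M_{h,N}^l(t))$ and estimate the two martingale increments separately. For the continuous part, componentwise $M_{h,N}(t+\tau,x_K)-M_{h,N}(t,x_K)=\int_t^{t+\tau}g(\uhnl(s,x_K))\,dW(s)$, so the It\^o isometry gives $\erww{\|M_{h,N}(t+\tau)-M_{h,N}(t)\|_\lzlambda^2}=\int_t^{t+\tau}\erww{\|g(\uhnl(s))\|_\lzlambda^2}\,ds$. Integrating in $t$ over $(0,T-\tau)$ and exchanging the order of integration by Tonelli, the inner time-set $\{t:\,t\leq s\leq t+\tau\}$ has Lebesgue measure at most $\tau$, whence this term is bounded by $\tau\int_0^T\erww{\|g(\uhnl(s))\|_\lzlambda^2}\,ds$. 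For the piecewise-constant part, $M_{h,N}^l(t+\tau)-M_{h,N}^l(t)=\int_{t_{n_0(t)}}^{t_{n_1(t)}}g(\uhnl(s))\,dW(s)$ (with $n_0(t),n_1(t)$ as in Lemma~\ref{lemzeittranslate}), and the It\^o isometry yields $\erww{\|M_{h,N}^l(t+\tau)-M_{h,N}^l(t)\|_\lzlambda^2}=\int_{t_{n_0(t)}}^{t_{n_1(t)}}\erww{\|g(\uhnl(s))\|_\lzlambda^2}\,ds$. Again exchanging orders of integration, for fixed $s\in[t_j,t_{j+1})$ the condition $t_{n_0(t)}\leq s<t_{n_1(t)}$ is equivalent to $t\in[t_{j+1}-\tau,t_{j+1})$, a set of measure at most $\tau$, so this term is likewise bounded by $\tau\int_0^T\erww{\|g(\uhnl(s))\|_\lzlambda^2}\,ds$. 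Finally, $\int_0^T\erww{\|g(\uhnl(s))\|_\lzlambda^2}\,ds=\Delta t\sum_{k=0}^{N-1}\erww{\|g(u_h^k)\|_\lzlambda^2}$ is uniformly bounded by~\eqref{210819_02}, which closes the estimate with a constant $\cter{K5}$ depending only on $\cter{K3}$, $C_L$, $|\Lambda|$ and $T$.

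The main obstacle is the piecewise-constant increment $M_{h,N}^l(t+\tau)-M_{h,N}^l(t)$: a crude bound using $2\|M_{h,N}^l(t+\tau)\|_\lzlambda^2+2\|M_{h,N}^l(t)\|_\lzlambda^2$, or a naive estimate of the length $t_{n_1(t)}-t_{n_0(t)}$ of its integration window, only produces a factor $\Delta t$ and fails when $\tau<\Delta t$. The decisive point is therefore the measure computation showing that, for each fixed $s$, the set of times $t$ for which the grid interval $[t_{n_0(t)},t_{n_1(t)})$ captures $s$ has length at most $\tau$ and not $\Delta t$; this is what transfers the correct $\tau$-scaling through the discretization and must be carried out uniformly in $N$.
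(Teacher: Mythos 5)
Your proposal is correct and follows essentially the same route as the paper: split off the $\uhnl-M_{h,N}^l$ increment (handled by Lemma~\ref{lemzeittranslate}), then control the increments of $M_{h,N}$ and $M_{h,N}^l$ separately via the It\^{o} isometry, with the crucial observation that for each fixed $s$ the set of $t$ whose window captures $s$ has measure at most $\tau$ (the paper phrases this through the identity $\int_0^{T-\tau}\chi_{n+1}(t,t+\tau)\,dt=\tau$, which is the same computation). The only cosmetic differences are your nested two-term triangle inequalities in place of the paper's three-way split and your use of the bound \eqref{210819_02} on $\int_0^T\erww{\|g(\uhnl(s))\|_\lzlambda^2}\,ds$ where the paper invokes Lemma~\ref{uhnsupbound}.
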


\begin{proof}
Let $0<\tau<T$. We can write using the fact that for any $a,b,c\in \mathbb{R}$, $|a+b+c|^2\leqslant 3(|a|^2+|b|^2+|c|^2)$
\begin{align*}
&\erwb\int_0^{T-\tau}\|\uhnl(t+\tau)-M_{h,N}(t+\tau)-(\uhnl(t)-M_{h,N}(t))\|_\lzlambda^2 dt\erwe\\
&\leq 3\erwb\int_0^{T-\tau}\|\uhnl(t+\tau)-M_{h,N}^l(t+\tau)-(\uhnl(t)-M_{h,N}^l(t))\|_\lzlambda^2 dt\erwe\\
&\quad+ 3\erwb\int_0^{T-\tau}\|M_{h,N}(t+\tau)-M_{h,N}(t)\|_\lzlambda^2 dt\erwe\\
&\quad+ 3\erwb\int_0^{T-\tau}\|M_{h,N}^l(t+\tau)-M_{h,N}^l(t)\|_\lzlambda^2dt\erwe\\
&=: 3(I_1+I_2+I_3).
\end{align*}
From Lemma \ref{lemzeittranslate} we know that $I_1\leq \cter{K3}\tau$.

By the It\^{o} isometry, $H_2$, \eqref{H3} and Lemma \ref{uhnsupbound} we get
\begin{align*}
I_2
&=\int_0^{T-\tau}\int_t^{t+\tau}\erww{\|g(\uhnl(s))\|_\lzlambda^2}ds\,dt\\
&\leq C_L\int_0^{T-\tau}\int_t^{t+\tau}\left(|\Lambda|+\erww{\|\uhnl(s)\|_\lzlambda^2}\right)\,ds\,dt\\
& \leq C_L|\Lambda|T\tau+C_L\int_0^{T-\tau}\int_t^{t+\tau}\erww{\max_{n\in\{0,\ldots,N\}}\|u_h^n\|_\lzlambda^2}\,ds\,dt\\
&\leq C_LT(|\Lambda|+\cter{K4})\tau. 
\end{align*}
For $t\in[0,T]$, let $n_0(t),n_1(t)\in\{0,\dots,N-1\}$ be defined as in the proof of Lemma \ref{lemzeittranslate}.
From the It\^o isometry, $H_2$, \eqref{H3} and Lemma \ref{uhnsupbound} we get
\begin{align*}
I_3&= \int_0^{T-\tau}\int_{n_0(t)\Delta t}^{n_1(t)\Delta t}\erww{\|g(\uhnl(s))\|_\lzlambda^2}ds\,dt\\
&\leq C_L(|\Lambda|+\cter{K4})\int_0^{T-\tau}\int_{n_0(t)\Delta t}^{n_1(t)\Delta t} 1 \,ds\,dt.
\end{align*}
Similarly to the proof of Lemma \ref{lemzeittranslate}, let $\chi_{n}(t,t+\tau)=1$ for $n\in\na$ if $n\Delta t\in(t,t+\tau]$ and $0$ otherwise. Taking \eqref{chi} into account, we can continue the above estimate by
\begin{align*}
I_3&\leq C_L(|\Lambda|+\cter{K4})\int_0^{T-\tau}\sum_{n=0}^{N-1}\chi_{n+1}(t,t+\tau)\Delta t\,dt\\
&=C_L(|\Lambda|+\cter{K4})\Delta t\sum_{n=0}^{N-1}\int_0^{T-\tau}\chi_{n+1}(t,t+\tau)\,dt=C_LT(|\Lambda|+\cter{K4})\tau,
\end{align*}
and the assertion follows.
\end{proof}

\subsection{Bound on the Gagliardo seminorm}

In this subsection we give bounds on the approximate solutions which will be used in the stochastic compactness argument in Subsection \ref{S1}.
We denote by $[\ \cdot\ ]_{W^{\alpha,2}(\Lambda)}$ the Gagliardo seminorm such that for any function $w:\Lambda\to \re$ one has,
\begin{equation*}
 [\ w\ ]_{W^{\alpha,2}(\Lambda)}
 = \left(\int_\Lambda\int_\Lambda\frac{|w(x)-w(y)|^2}{|x-y|^{2+2\alpha}}dx\,dy\right)^\frac12.
\end{equation*}

Note that $W^{\alpha,2}(\Lambda)=\{w\in\lz:[\, w\, ]_{W^{\alpha,2}(\Lambda)}<\infty\}$.

\begin{lem}\label{spacialboundedness}
For any fixed $\alpha\in(0,\halbe)$, the sequences $(\uhnl)_{h,N}$ and  $(\uhnr)_{h,N}$ are bounded in $L^2(\Omega;L^2(0,T; W^{\alpha,2}(\Lambda)))$ independently of the discretization parameters $N\in\mathbb{N}^{\ast}$ and $h$.
\end{lem}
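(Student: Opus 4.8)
The plan is to bound, uniformly in $N$ and $h$, the quantity $\erwb\int_0^T [\uhnl(t)]_{W^{\alpha,2}(\Lambda)}^2\,dt\erwe$ (and its analogue for $\uhnr$) by controlling each time slice through the space-translate estimate of Lemma~\ref{210112_lem01}, and then invoking the already-established uniform bounds of Lemmas~\ref{210611_lem01} and~\ref{remarkuhnrboundintomega}.

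First I would rewrite the Gagliardo seminorm via a change of variables. For fixed $t$ and $\mathds{P}$-a.s.\ in $\Omega$, setting $\eta=y-x$ and using Fubini gives
\begin{align*}
[\uhnl(t)]_{W^{\alpha,2}(\Lambda)}^2
=\int_{\re^2}\frac{1}{|\eta|^{2+2\alpha}}\int_{\{x\in\Lambda:\,x+\eta\in\Lambda\}}|\uhnl(t,x+\eta)-\uhnl(t,x)|^2\,dx\,d\eta.
\end{align*}
The crucial observation is that $x,x+\eta\in\Lambda$ forces $|\eta|\le\operatorname{diam}(\Lambda)$, so the outer integral runs only over the ball $\{|\eta|\le\operatorname{diam}(\Lambda)\}$ and no large-$\eta$ regime appears. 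On this set I would replace $\uhnl(t)$ by its zero-extension $\buhnl(t)$ and dominate the inner integral by the full $\re^2$-translate, which is exactly the quantity estimated in Lemma~\ref{210112_lem01}.

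Applying Lemma~\ref{210112_lem01} then yields, for $|\eta|\le\operatorname{diam}(\Lambda)$,
\begin{align*}
\int_{\re^2}|\buhnl(t,x+\eta)-\buhnl(t,x)|^2\,dx\le C|\eta|\left(|\uhnl(t)|_{1,h}^2+\|\uhnl(t)\|_{L^2(\Lambda)}^2\right),
\end{align*}
so that, pulling the $\eta$-independent factor out,
\begin{align*}
[\uhnl(t)]_{W^{\alpha,2}(\Lambda)}^2\le C\left(|\uhnl(t)|_{1,h}^2+\|\uhnl(t)\|_{L^2(\Lambda)}^2\right)\int_{\{|\eta|\le\operatorname{diam}(\Lambda)\}}|\eta|^{-1-2\alpha}\,d\eta.
\end{align*}
Here is the point where the restriction $\alpha\in(0,\halbe)$ enters: in $\re^2$ polar coordinates the remaining integral equals a constant times $\int_0^{\operatorname{diam}(\Lambda)}r^{-2\alpha}\,dr$, which is finite precisely because $2\alpha<1$. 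Since the translate estimate gains only one power of $|\eta|$, this threshold is sharp, and verifying this integrability is the one genuinely delicate step; everything else is bookkeeping.

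Finally I would integrate in $t$ over $[0,T]$ and take the expectation. The right-hand side becomes a constant times $\erwb\int_0^T|\uhnl(t)|_{1,h}^2\,dt\erwe+\erwb\int_0^T\|\uhnl(t)\|_{L^2(\Lambda)}^2\,dt\erwe$, both uniformly bounded in $N$ and $h$ by \eqref{210611_01} and Lemma~\ref{210611_lem01}. For the right approximation $(\uhnr)_{h,N}$ the argument is identical: the space-translate estimate of Lemma~\ref{210112_lem01} is a pointwise-in-time statement about a piecewise constant function on the same mesh, hence holds verbatim with $\uhnr(t)$ in place of $\uhnl(t)$, and one concludes using \eqref{uhnrboundinotomega} together with Lemma~\ref{210611_lem01}.
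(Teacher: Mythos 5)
Your proof is correct and follows essentially the same route as the paper: change of variables in the Gagliardo seminorm, the zero extension, the space-translate estimate of Lemma~\ref{210112_lem01}, and the observation that $\int_0^{\cdot} r^{-2\alpha}\,dr<\infty$ exactly for $\alpha<\tfrac12$, concluding with the uniform bounds of Lemmas~\ref{210611_lem01} and~\ref{remarkuhnrboundintomega}. The only (harmless) difference is that you note $x,x+\eta\in\Lambda$ forces $|\eta|\le\operatorname{diam}(\Lambda)$ and so dispense with the far-field regime, whereas the paper estimates the seminorm of the zero extension over all of $\re^2\times\re^2$ and must treat $|\eta|>R$ separately via the trivial $L^2$ bound.
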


\begin{proof}
We fix $0<\alpha<\halbe$, $R>0$ and define $\buhnl$ as in Lemma \ref{210112_lem01}. For almost every $t\in(0,T)$, thanks to Lemma \ref{210112_lem01} 
\begin{align*}
&\int_{\re^2}\int_{\re^2}\frac{|\bar{u}_{h,N}^l(t,x)-\bar{u}_{h,N}^l(t,y)|^2}{|x-y|^{2+2\alpha}}dx\,dy\\
&=\int_{|\eta|>R}\int_{\re^2}\frac{|\bar{u}_{h,N}^l(t,x)-\bar{u}_{h,N}^l(t,x+\eta)|^2}{|\eta|^{2(1+\alpha)}}dx\,d\eta\\
&\quad+\int_{|\eta|<R}\int_{\re^2}\frac{|\bar{u}_{h,N}^l(t,x)-\bar{u}_{h,N}^l(t,x+\eta)|^2}{|\eta|^{2(1+\alpha)}}dx\,d\eta\\
&\leq 4\|\bar{u}_{h,N}^l(t)\|_{L^2(\re^2)}^2\int_{|\eta|>R}|\eta|^{-2(1+\alpha)}d\eta \\
&+ C\left(|\uhnl(t)|_{1,h}^2+\|\uhnl(t)\|_{L^2(\Lambda)}^2\right)
\int_{|\eta|<R}|\eta|^{-2(1+\alpha)+1}d\eta\\
&=4\|\uhnl(t)\|_{L^2(\Lambda)}^2\int_0^{2\pi}\int_R^\infty r^{-2(1+\alpha)}r\,drd\varphi\\
&+C
\left(|\uhnl(t)|_{1,h}^2+\|\uhnl(t)\|_{L^2(\Lambda)}^2\right)\int_0^{2\pi}\int_0^R r^{-2\alpha-1} r\,drd\varphi\\
&=: \tilde{C}_1\|\uhnl(t)\|_{L^2(\Lambda)}^2
+\tilde{C}_2|\uhnl(t)|_{1,h}^2
\end{align*}
where the constants $\tilde{C}_1,\tilde{C}_2\geq 0$ only depend on $\Lambda$ and $R>0$.
Consequently we have
\begin{align*}
\int_0^T[\uhnl(t)]_{W^{\alpha,2}(\Lambda)}^2 dt
&\leq\int_0^T\int_{\re^2}\int_{\re^2}\frac{|\bar{u}_{h,N}^l(t,x)-\buhnl(t,y)|^2}{|x-y|^{2+2\alpha}}dx\,dy\,dt\\
&\leq \int_0^T \left(\tilde{C}_1\|\uhnl(t)\|^2_{L^2(\Lambda)}+\tilde{C}_2|\uhnl(t)|_{1,h}^2 \right)dt.
\end{align*}
Therefore thanks to Lemmas \ref{remarkuhnrboundintomega} and~ \ref{uhnsupbound} we get
\begin{align*}
\erwb\|\uhnl\|_{L^2(0,T;W^{\alpha,2}(\Lambda))}\erwe&=\erwb\int_0^T\left(\|\uhnl(t)\|_{L^2(\Lambda)}+[\uhnl(t)]_{W^{\alpha,2}(\Lambda)}\right)^2 dt\erwe\\
&\leq 2(1+\tilde{C}_1)\int_0^T\erwb\|\uhnl(t)\|_{L^2(\Lambda)}^2\erwe\,dt
+2\tilde{C}_2 \int_0^T\erww{|\uhnl(t)|_{1,h}^2} dt\\
&\leq 2T\cter{K4}(1+\tilde{C}_1)+2\tilde{C}_2\cter{K2}.
\end{align*}
So $(\uhnl)_{h,N}$ is bounded in $L^2(\Omega;L^2(0,T;W^{\alpha,2}(\Lambda)))$ and also $(\uhnr)_{h,N}$ with similar arguments.
\end{proof}

In order to establish the $L^2(\Omega; W^{\alpha,2}(0,T;L^2(\Lambda)))$-bound on the discrete solutions we give the following auxiliary result:

\begin{lem}\label{uhnlMhNbound}
For any fixed $\alpha\in(0,\halbe)$, the sequence $(\uhnl-M_{h,N})_{h,N}$ defined by~\eqref{eq:defM} is bounded in $L^2(\Omega; W^{\alpha,2}(0,T;L^2(\Lambda)))$ independently of the discretization parameters $N\in\mathbb{N}^{\ast}$ and $h$.
\end{lem}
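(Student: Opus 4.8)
The plan is to control the full $W^{\alpha,2}(0,T;\lzlambda)$-norm of $\varphi:=\uhnl-M_{h,N}$ by splitting it into its $L^2(0,T;\lzlambda)$-part and its Gagliardo seminorm in the time variable, and then to dominate the seminorm by the time-translate estimate of Lemma~\ref{lemdefMhN}. Recall that the norm on $W^{\alpha,2}(0,T;\lzlambda)$ is the sum of $\|\cdot\|_{L^2(0,T;\lzlambda)}^2$ and the seminorm $[\varphi]_{W^{\alpha,2}(0,T;\lzlambda)}^2=\int_0^T\int_0^T\|\varphi(t)-\varphi(s)\|_\lzlambda^2\,|t-s|^{-1-2\alpha}\,dt\,ds$, where the exponent $1+2\alpha$ reflects that the underlying domain $(0,T)$ is one-dimensional.

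First I would dispose of the $L^2(\Omega;L^2(0,T;\lzlambda))$-bound on $\varphi$. Since $(\uhnl)_{h,N}$ is already bounded in this space by Lemma~\ref{210611_lem01}, it suffices to bound $(M_{h,N})_{h,N}$. By the definition~\eqref{eq:defM} and the It\^o isometry one has $\erww{\|M_{h,N}(t)\|_\lzlambda^2}=\int_0^t\erww{\|g(\uhnl(s))\|_\lzlambda^2}\,ds$, which is controlled uniformly in $h,N$ via $H_2$, \eqref{H3} and the $L^2$-bound on $\uhnl$; integrating in $t\in(0,T)$ then gives the claim.

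The main step is the seminorm. Using the symmetry of the integrand in $(s,t)$, the substitution $t=s+\tau$, and Tonelli's theorem (all integrands being nonnegative, so every interchange of integration order and of expectation with integration is legitimate), I would rewrite, after taking expectation, $\erww{[\varphi]_{W^{\alpha,2}(0,T;\lzlambda)}^2}$ as $2\int_0^T\tau^{-1-2\alpha}\,\erww{\int_0^{T-\tau}\|\varphi(s+\tau)-\varphi(s)\|_\lzlambda^2\,ds}\,d\tau$, where the region $\{0<s<T,\,0<\tau<T-s\}$ is recast as $\{0<\tau<T,\,0<s<T-\tau\}$. Applying Lemma~\ref{lemdefMhN} to the inner expectation gives $\erww{\int_0^{T-\tau}\|\varphi(s+\tau)-\varphi(s)\|_\lzlambda^2\,ds}\le\cter{K5}\tau$, so the whole expression is dominated by $2\cter{K5}\int_0^T\tau^{-2\alpha}\,d\tau$.

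The only place where the hypothesis $\alpha\in(0,\halbe)$ enters—and the one point deserving a word of care—is the final integrability $\int_0^T\tau^{-2\alpha}\,d\tau=\frac{T^{1-2\alpha}}{1-2\alpha}<\infty$, which holds precisely because $2\alpha<1$. I do not expect a genuine obstacle here: the argument is a direct combination of Lemma~\ref{lemdefMhN} with Tonelli's theorem, and the independence of the resulting bound from $h$ and $N$ is inherited from the uniformity of $\cter{K5}$. The only technical content is the bookkeeping of the change of variables and the order of integration.
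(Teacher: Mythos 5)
Your proof is correct and follows essentially the same route as the paper: symmetrize the Gagliardo seminorm in time, substitute $t=s+\tau$, apply Tonelli, invoke the time-translate estimate of Lemma~\ref{lemdefMhN}, and use $2\alpha<1$ for the integrability of $\tau^{-2\alpha}$. The only difference is that you also explicitly bound the $L^2(\Omega;L^2(0,T;L^2(\Lambda)))$-part of the norm via the It\^o isometry, a point the paper leaves implicit; this is a harmless (indeed welcome) completion rather than a different argument.
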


\begin{proof}
For any $x \in \Lambda$, let $\bar{\varphi}_{h,N}(t,x):=\uhnl(t,x)-M_{h,N}(t,x)$ for $t\in[0,T]$ and $\bar{\varphi}_{h,N}(t,x)=0$ for $t\in\re\setminus[0,T]$.
We have,
\begin{align*}
&\erwb\int_0^T\int_0^T\frac{\|\bar{\varphi}_{h,N}(s)-\bar{\varphi}_{h,N}(t)\|_{L^2(\Lambda)}^2}{|t-s|^{1+2\alpha}}\,ds\,dt\erwe\\
&=\erwb\int_0^T\int_0^t\frac{\|\bar{\varphi}_{h,N}(s)-\bar{\varphi}_{h,N}(t)\|_{L^2(\Lambda)}^2}{|t-s|^{1+2\alpha}}\,ds\,dt\erwe\\
&\quad+\erwb\int_0^T\int_t^T\frac{\|\bar{\varphi}_{h,N}(s)-\bar{\varphi}_{h,N}(t)\|_{L^2(\Lambda)}^2}{|t-s|^{1+2\alpha}}\,ds\,dt\erwe\\
&=\erwb\int_0^T\int_{0}^{t}\frac{\|\bar{\varphi}_{h,N}(t-\tau)-\bar{\varphi}_{h,N}(t)\|_{L^2(\Lambda)}^2}{|\tau|^{1+2\alpha}}\,d\tau\,dt\erwe\\
&\quad+\erwb\int_0^T\int_{0}^{T-t}\frac{\|\bar{\varphi}_{h,N}(t+\tau)-\bar{\varphi}_{h,N}(t)\|_{L^2(\Lambda)}^2}{|\tau|^{1+2\alpha}}\,d\tau\,dt\erwe
\end{align*}
Using Fubini's theorem we obtain
\begin{align*}
&\erwb\int_0^T\int_{0}^{t}\frac{\|\bar{\varphi}_{h,N}(t-\tau)-\bar{\varphi}_{h,N}(t)\|_{L^2(\Lambda)}^2}{|\tau|^{1+2\alpha}}\,d\tau\,dt\erwe\\
&\quad+\erwb\int_0^T\int_{0}^{T-t}\frac{\|\bar{\varphi}_{h,N}(t+\tau)-\bar{\varphi}_{h,N}(t)\|_{L^2(\Lambda)}^2}{|\tau|^{1+2\alpha}}\,d\tau\,dt\erwe\\
&=\erwb\int_0^T\int_{\tau}^{T}\frac{\|\bar{\varphi}_{h,N}(t-\tau)-\bar{\varphi}_{h,N}(t)\|^2_{L^2(\Lambda)}}{|\tau|^{1+2\alpha}}\,dt\,d\tau\erwe\\
&\quad+\erwb\int_0^T\int_{0}^{T-\tau}\frac{\|\bar{\varphi}_{h,N}(t+\tau)-\bar{\varphi}_{h,N}(t)\|^2_{L^2(\Lambda)}}{|\tau|^{1+2\alpha}}\,dt\,d\tau\erwe\\
&=\erwb\int_0^T\int_{0}^{T-\tau}\frac{\|\bar{\varphi}_{h,N}(s)-\bar{\varphi}_{h,N}(s+\tau)\|^2_{L^2(\Lambda)}}{|\tau|^{1+2\alpha}}\,ds\,d\tau\erwe\\
&\quad+\erwb\int_0^T\int_{0}^{T-\tau}\frac{\|\bar{\varphi}_{h,N}(t+\tau)-\bar{\varphi}_{h,N}(t)\|^2_{L^2(\Lambda)}}{|\tau|^{1+2\alpha}}\,dt\,d\tau\erwe\\
&= 2\int_0^T|\tau|^{-1-2\alpha}\int_0^{T-\tau}\erwb\|\bar{\varphi}_{h,N}(t+\tau)-\bar{\varphi}_{h,N}(t)\|_{L^2(\Lambda)}^2\erwe dt\,d\tau\\
&\leq 2\cter{K5}\int_0^T|\tau|^{-2\alpha}\,d\tau,
\end{align*}
where estimate~\eqref{bmhn} is used in the last inequality. Thus,  the above integral is finite for $\alpha\in\left(0,\halbe\right)$.

\end{proof}

\begin{lem}\label{timeboundedness}
For any fixed $\alpha\in(0,\halbe)$, the sequence $(\uhnl)_{h,N}$ is bounded in \linebreak $L^2(\Omega;W^{\alpha,2}(0,T;L^2(\Lambda)))$ independently of the discretization parameters $N\in\mathbb{N}^{\ast}$ and $h$.
\end{lem}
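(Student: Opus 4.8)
The plan is to use the decomposition $\uhnl=(\uhnl-M_{h,N})+M_{h,N}$ and to rely on the fact that the first summand has already been controlled. Indeed, Lemma~\ref{uhnlMhNbound} shows that $(\uhnl-M_{h,N})_{h,N}$ is bounded in $L^2(\Omega;W^{\alpha,2}(0,T;\lzlambda))$ uniformly in $h$ and $N$. Since $W^{\alpha,2}(0,T;\lzlambda)$ is a normed space, once I also prove a uniform bound on the martingale part $(M_{h,N})_{h,N}$ in the same space, the triangle inequality immediately yields the asserted bound on $(\uhnl)_{h,N}$. So the whole work is reduced to estimating $M_{h,N}$.

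For the martingale part I would exploit that, for $0\le s<t\le T$, the increment is again a stochastic integral, namely $M_{h,N}(t,x)-M_{h,N}(s,x)=\int_s^t g(\uhnl(r,x))\,dW(r)$. By Fubini's theorem and the It\^o isometry this gives the mean-square identity
\[
\erww{\|M_{h,N}(t)-M_{h,N}(s)\|_{\lzlambda}^2}=\erww{\int_s^t\|g(\uhnl(r))\|_{\lzlambda}^2\,dr}.
\]
Applying $H_2$ together with \eqref{H3} and the uniform bound of Lemma~\ref{uhnsupbound} (which produces the constant $\cter{K4}$) then yields the mean-square Hölder-$\halbe$ estimate
\[
\erww{\|M_{h,N}(t)-M_{h,N}(s)\|_{\lzlambda}^2}\le C_L(|\Lambda|+\cter{K4})\,|t-s|.
\]
The same computation with $s=0$ controls the $L^2(0,T;\lzlambda)$-part of the norm.

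It remains to feed this into the time Gagliardo seminorm. Inserting the mean-square bound into the defining double integral reduces the estimate of $\erww{[M_{h,N}]_{W^{\alpha,2}(0,T;\lzlambda)}^2}$ to a multiple of
\[
C_L(|\Lambda|+\cter{K4})\int_0^T\!\!\int_0^T|t-s|^{-2\alpha}\,ds\,dt,
\]
which is finite precisely because $2\alpha<1$, i.e. $\alpha\in(0,\halbe)$. Combined with the $L^2$-part this shows that $(M_{h,N})_{h,N}$ is bounded in $L^2(\Omega;W^{\alpha,2}(0,T;\lzlambda))$, and combining with Lemma~\ref{uhnlMhNbound} via the triangle inequality concludes the proof.

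The argument is essentially routine once Lemma~\ref{uhnlMhNbound} is in hand; the only genuinely structural point is that the It\^o integral provides merely a \emph{mean-square} (rather than pathwise) Hölder-$\halbe$ modulus for $M_{h,N}$, which is nevertheless exactly the regularity that the fractional Sobolev-Slobodeckij seminorm can absorb for $\alpha<\halbe$. Accordingly, the threshold $\alpha<\halbe$ is not technical but intrinsic, being precisely the condition guaranteeing integrability of $|t-s|^{-2\alpha}$ near the diagonal.
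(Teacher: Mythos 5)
Your proof is correct and follows the same route as the paper: the identical decomposition $\uhnl=(\uhnl-M_{h,N})+M_{h,N}$, Lemma~\ref{uhnlMhNbound} for the first summand, a $W^{\alpha,2}$-bound on the martingale part fed by Lemma~\ref{uhnsupbound}, and the triangle inequality. The only difference is that where the paper simply cites \cite[Lemma 2.1]{gatarekflandoli} for the fractional time regularity of the stochastic integral $M_{h,N}$, you prove that bound directly from the It\^o isometry and the integrability of $|t-s|^{-2\alpha}$ for $\alpha<\halbe$ --- which is precisely the content of the cited lemma specialized to $p=2$.
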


\begin{proof}
From Lemma \ref{uhnlMhNbound} we know that $(\uhnl-M_{h,N})_{h,N}$ is bounded in $L^2(\Omega;W^{\alpha,2}(0,T;\linebreak L^2(\Lambda)))$. By Lemma \ref{uhnsupbound},
\begin{align*}
\int_0^T\erww{\|\uhnl(t)\|_\lzlambda^2}dt\leq T\erww{\sup_{t\in[0,T]}\|\uhnl(t)\|_\lzlambda^2}\leq T\cter{K4}
\end{align*}
hence, by applying \cite[Lemma 2.1]{gatarekflandoli} it follows that $(M_{h,N})_{h,N}$ is bounded in\linebreak $L^2(\Omega;W^{\alpha,2}(0,T;L^2(\Lambda)))$. Now, since $\uhnl=(\uhnl-M_{h,N}^l)+M_{h,N}^l$, the assertion follows.
\end{proof}

\section{Convergence of the finite-volume scheme}\label{ConvFVscheme}

We now have all the necessary material to pass to the limit in the numerical scheme.

In the sequel, for $m\in\mathbb{N}^\ast$, let $(\Tau_m)_m$ be a sequence of admissible meshes of $\Lambda$ in the sense of Definition \ref{defmesh} such that the mesh size $h_m$ tends to $0$ when $m$ tends to $+\infty$ and let $(N_m)_m\subset\mathbb{N}$ be a sequence with $\lim_{m\rightarrow+\infty} N_m=+\infty$ and $\Delta t_m:=\frac{T}{N_m}$.\\ 
For the sake of simplicity we shall use the notations $\Tau=\Tau_m$, $h=\operatorname{size}(\Tau_m)$, $\Delta t=\Delta t_m$ and $N=N_m$ when the $m$-dependency is not useful for the understanding of the reader.

\subsection{Weak convergence of finite-volume approximations}
First, thanks to the bounds on the discrete solutions, we obtain the following weak convergences.

\begin{lem}\label{addreg u}
There exist not relabeled subsequences of $(u_{h,N}^r)_m$ and of $(u_{h,N}^l)_m$ respectively and a function $u\in L^2(\Omega;L^2( 0,T;H^1(\Lambda)))$ such that
\[u_{h,N}^l\rightharpoonup u \ \text{and} \ \uhnr\rightharpoonup u\]
for $m\rightarrow+\infty$ weakly in $L^2(\Omega;L^2(0,T;L^2(\Lambda)))$.
\end{lem}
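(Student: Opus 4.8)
The plan is to combine weak compactness in the Hilbert space $L^2(\Omega;L^2(0,T;\lzlambda))$ with the discrete $H^1$-estimates already established, in order to both extract the limit and upgrade its spatial regularity.

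First I would invoke Lemma~\ref{210611_lem01}: the families $(\uhnr)_m$ and $(\uhnl)_m$ are bounded in $L^2(\Omega;L^2(0,T;\lzlambda))$, which is a Hilbert, hence reflexive, space. By the Banach--Alaoglu theorem, bounded sets are weakly relatively compact, so after passing to (not relabeled) subsequences there exist limits $u^r,u^l\in L^2(\Omega;L^2(0,T;\lzlambda))$ with $\uhnr\rightharpoonup u^r$ and $\uhnl\rightharpoonup u^l$ weakly.

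Next I would check that the two limits coincide. By the definitions in \eqref{eq:notation_wh}, on each time slab $[t_n,t_{n+1})$ one has $\uhnr-\uhnl=u_h^{n+1}-u_h^n$, whence
\begin{align*}
\erwb\int_0^T\|\uhnr(t)-\uhnl(t)\|_{\lzlambda}^2\,dt\erwe
=\Delta t\sum_{n=0}^{N-1}\erwb\|u_h^{n+1}-u_h^n\|_{\lzlambda}^2\erwe\leq \Delta t\,C_1,
\end{align*}
where the last bound is Proposition~\ref{bounds} with $n=N$. Since $\Delta t=T/N_m\to0$, the difference $\uhnr-\uhnl$ converges to $0$ strongly, hence weakly, in $L^2(\Omega;L^2(0,T;\lzlambda))$, so that $u^r=u^l=:u$.

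The substantive step is to show $u\in L^2(\Omega;L^2(0,T;H^1(\Lambda)))$. Viewing the discrete gradients as the diamond-cell piecewise constant functions $\nabla^h\uhnr$, Remark~\ref{remarkforuhnrboundiii} gives $\|\nabla^h\uhnr\|^2=2|\uhnr|_{1,h}^2$, and estimate \eqref{uhnrboundinotomega} of Lemma~\ref{remarkuhnrboundintomega} then bounds $(\nabla^h\uhnr)_m$ in $L^2(\Omega;L^2(0,T;\lzlambda)^2)$. Extracting once more yields a weak limit $\chi$ in that space, and it remains to identify $\chi=\nabla u$. Testing against product functions $\phi(\omega,t)\varphi(x)$ with $\phi\in L^\infty(\Omega\times(0,T))$ and $\varphi\in C_c^\infty(\Lambda)^2$, a discrete summation by parts over the edges (in the spirit of \eqref{PInt}) rewrites the spatial pairing of $\nabla^h\uhnr$ against $\varphi$ as a pairing of $\uhnr$ against a consistent discretization of $\diver\varphi$; passing to the limit, using the weak convergence $\uhnr\rightharpoonup u$ together with the strong convergence of the reconstructed test fields guaranteed by the consistency of the TPFA construction (see \cite[Section 10]{gal}), gives
\begin{align*}
\erwb\int_0^T\!\!\int_\Lambda \chi\cdot(\phi\varphi)\,dx\,dt\erwe=-\erwb\int_0^T\!\!\int_\Lambda u\,\phi\,\diver\varphi\,dx\,dt\erwe.
\end{align*}
By density of such product functions this is exactly $\chi=\nabla u$ in the weak sense on $\Omega\times(0,T)\times\Lambda$, so $u(\omega,t)\in H^1(\Lambda)$ for a.e.\ $(\omega,t)$ with $\nabla u=\chi\in L^2$, which is the claimed regularity. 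The main obstacle is precisely this last identification: one must control, uniformly as $h_m\to0$, the consistency error between the TPFA reconstruction and the true differential operators, which is where the uniform bound on the mesh regularity $\reg$ of \eqref{mrp} is indispensable. Note that the merely weak mode of convergence of $\uhnr$ is not itself an obstruction, since each pairing is linear in $\uhnr$ and the test reconstructions converge strongly.
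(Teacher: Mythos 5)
Your proposal is correct and follows essentially the same route as the paper: weak compactness from the $L^2(\Omega;L^2(0,T;L^2(\Lambda)))$ bounds of Lemma~\ref{210611_lem01}, the estimate $\erww{\|\uhnr-\uhnl\|^2_{L^2(0,T;\lzlambda)}}\leq C_1\Delta t$ from Proposition~\ref{bounds} to identify the two weak limits, and the uniform bound on $\nabla^h\uhnr$ from Lemma~\ref{remarkuhnrboundintomega} and Remark~\ref{remarkforuhnrboundiii} followed by identification of the weak limit $\chi$ with $\nabla u$. The only difference is one of presentation: the paper delegates that last identification to \cite[Lemma 2]{eymardgallouet} and \cite[Theorem 14.3]{gal}, whereas you sketch the underlying discrete integration-by-parts and TPFA consistency argument explicitly, which is exactly what those references contain.
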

\begin{proof}
From Lemma \ref{210611_lem01} it follows that the sequences $(u_{h,N}^r)_m$, $(\uhnl)_m$ respectively are bounded in $L^2(\Omega;L^2(0,T;L^2(\Lambda)))$, thus, up to a not relabeled subsequence, weakly convergent in $L^2(\Omega;L^2(0,T;L^2(\Lambda)))$ towards possibly distinct elements $u$, $\tilde{u}$ respectively. Moreover, from Lemma \ref{remarkuhnrboundintomega} and Remark \ref{remarkforuhnrboundiii}, it follows that 
\[\| \nabla^h \uhnr\|^2_{L^2(\Omega\times (0,T)\times \Lambda)}\leq 2 \cter{K1}.\]  
Consequently, there exists $\chi\in L^2(\Omega;L^2(0,T;L^2(\Lambda)))$ such that, passing to a not relabeled subsequence if necessary,
$\nabla^h \uhnr\rightharpoonup \chi$ weakly in $L^2(\Omega;L^2(0,T;L^2(\Lambda)))$ for $m\rightarrow+\infty$. With similar arguments as in \cite[Lemma 2]{eymardgallouet} and \cite[Theorem 14.3]{gal} we get the additional regularity $u\in L^2(\Omega;L^2( 0,T;H^1(\Lambda)))$ and $\chi=\nabla u$. Since, by Proposition~\ref{bounds},
\begin{align}\label{210824_05}
\erww{\|\uhnr-\uhnl\|_{L^2(0,T;\lzlambda)}^2}=\Delta t\erww{\sum_{n=0}^{N-1}\|u_h^{n+1}-u_h^n\|_\lzlambda^2}
\leq C_1\Delta t,
\end{align}
it follows that $(\uhnr-\uhnl)_m$ converges to $0$ strongly in $L^2(\Omega;L^2(0,T;L^2(\Lambda)))$ when $m\rightarrow+\infty$, hence also weakly and therefore $u=\tilde{u}$.
\end{proof}
Our aim is to show that $u$ is the unique solution to \eqref{equation}. But weak convergence is not enough to pass to the limit in the nonlinear diffusion term of our finite-volume scheme. Therefore we will apply the method of stochastic compactness.

\subsection{The stochastic compactness argument}\label{S1}
For better readability we define $V:=L^2(0,T;L^2(\Lambda))$ and
$$\mathcal{W}:=W^{\alpha,2}(0,T;L^2(\Lambda))\cap L^2(0,T;W^{\alpha,2}(\Lambda)).$$
From Lemmas \ref{spacialboundedness} and \ref{timeboundedness} we get immediately the following bound.
\begin{lem}\label{stochcomp}
For any fixed $\alpha\in(0,\halbe)$, there exists a constant $\ctel{K6}$ depending on $u_0$ and the mesh regularity $\reg$ but not depending on the discretization parameter $m\in\na^\ast$, such that
\begin{align*}
\erww{\|\uhnl\|_{\mathcal{W}}^2}\leq \cter{K6}.
\end{align*}
\end{lem}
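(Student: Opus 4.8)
The plan is to combine the two separate bounds already established for the two constituent seminorms that make up the $\mathcal{W}$-norm. Recall that by definition
$$\|\uhnl\|_{\mathcal{W}}^2=\|\uhnl\|_{W^{\alpha,2}(0,T;L^2(\Lambda))}^2+\|\uhnl\|_{L^2(0,T;W^{\alpha,2}(\Lambda))}^2,$$
so after taking expectation it suffices to bound each of the two expectations separately by a constant independent of $m$. This is a direct assembly of prior results rather than a fresh estimate, so there is essentially no new analytic difficulty.

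First I would recall that Lemma~\ref{timeboundedness} gives exactly that $(\uhnl)_{h,N}$ is bounded in $L^2(\Omega;W^{\alpha,2}(0,T;L^2(\Lambda)))$ independently of the discretization parameters, for the fixed $\alpha\in(0,\halbe)$; call the corresponding bound $\tilde{C}$. Second, Lemma~\ref{spacialboundedness} gives that $(\uhnl)_{h,N}$ is bounded in $L^2(\Omega;L^2(0,T;W^{\alpha,2}(\Lambda)))$ independently of the discretization parameters, say by $\hat{C}$. Both constants depend only on $u_0$, on the data $C_L$, $|\Lambda|$, $T$, and on the mesh regularity $\reg$ (through the constants $\cter{K2}$ and $\cter{K4}$ entering those lemmas), but not on $m$.

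Adding the two inequalities yields
$$\erww{\|\uhnl\|_{\mathcal{W}}^2}=\erww{\|\uhnl\|_{W^{\alpha,2}(0,T;L^2(\Lambda))}^2}+\erww{\|\uhnl\|_{L^2(0,T;W^{\alpha,2}(\Lambda))}^2}\leq \tilde{C}+\hat{C}=:\cter{K6},$$
which is the claimed bound with a constant $\cter{K6}$ depending only on $u_0$ and the mesh regularity $\reg$ but independent of $m\in\na^\ast$. The only point requiring a word of care is the definition of the intersection-space norm: one must make sure the norm on $\mathcal{W}$ is taken as (the square root of) the sum of the squared component norms, so that the two bounds combine additively; with the equivalent definition using the maximum of the two seminorms the same conclusion follows immediately. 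I do not foresee any genuine obstacle here, since the substantive work has already been carried out in Lemmas~\ref{spacialboundedness} and~\ref{timeboundedness}; this statement is a bookkeeping corollary that packages those two bounds into a single uniform estimate in the space $\mathcal{W}$ needed for the Prokhorov/tightness argument to follow.
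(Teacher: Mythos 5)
Your proposal is correct and matches the paper exactly: the paper states that Lemma~\ref{stochcomp} follows "immediately" from Lemmas~\ref{spacialboundedness} and~\ref{timeboundedness}, which is precisely the additive assembly of the two component bounds you carry out. Your remark about checking how the intersection-space norm on $\mathcal{W}$ is defined is a reasonable extra precaution, but it does not change the argument.
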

In the following, for a random variable $X$ defined on a probability space $(\Omega,\mathcal{A},\mathds{P})$ the law of $X$ will be denoted by $\mathds{P}\circ X^{-1}$.

\begin{lem}\label{uhnltight}
The sequence of laws $(\mathds{P}\circ(\uhnl)^{-1})_m$ on $L^2(0,T;L^2(\Lambda))$ is tight.
\end{lem}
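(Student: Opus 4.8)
The plan is to apply the compactness criterion embodied in the compact embedding $\mathcal{W}\hookrightarrow\hookrightarrow V$, combined with the uniform bound from Lemma~\ref{stochcomp} and Prokhorov's theorem. First I would recall that for $\alpha\in(0,\halbe)$ the space $\mathcal{W}=W^{\alpha,2}(0,T;L^2(\Lambda))\cap L^2(0,T;W^{\alpha,2}(\Lambda))$ embeds compactly into $V=L^2(0,T;L^2(\Lambda))$. The spatial factor $L^2(0,T;W^{\alpha,2}(\Lambda))$ controls fractional regularity in space (and $W^{\alpha,2}(\Lambda)\hookrightarrow\hookrightarrow L^2(\Lambda)$ on the bounded set $\Lambda$ by Rellich--Kondrachov for fractional Sobolev spaces), while the temporal factor $W^{\alpha,2}(0,T;L^2(\Lambda))$ provides the equicontinuity in time needed to upgrade this to compactness in $V$; the relevant statement is the Aubin--Lions--Simon type result for fractional Sobolev spaces (see, e.g., the criterion used in \cite{gatarekflandoli}).

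Given this compact embedding, the tightness argument is essentially mechanical. For $R>0$ consider the closed ball $B_R:=\{v\in\mathcal{W}:\|v\|_{\mathcal{W}}\leq R\}$, which is a compact subset of $V$ by the compact embedding. I would then estimate, using the Markov (Chebyshev) inequality together with Lemma~\ref{stochcomp},
\begin{align*}
\mathds{P}\circ(\uhnl)^{-1}(V\setminus B_R)
=\mathds{P}\left(\|\uhnl\|_{\mathcal{W}}>R\right)
\leq\frac{\erww{\|\uhnl\|_{\mathcal{W}}^2}}{R^2}
\leq\frac{\cter{K6}}{R^2}.
\end{align*}
Since the bound $\cter{K6}$ is independent of the discretization parameter $m$, for any given $\eps>0$ I can choose $R>0$ large enough so that $\cter{K6}/R^2<\eps$, and then the compact set $B_R\subset V$ satisfies $\mathds{P}\circ(\uhnl)^{-1}(B_R)\geq 1-\eps$ uniformly in $m$. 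This is exactly the definition of tightness of the sequence of laws $(\mathds{P}\circ(\uhnl)^{-1})_m$ on $V$.

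The only genuine point requiring care is the compactness of the embedding $\mathcal{W}\hookrightarrow\hookrightarrow V$, which is where all the work from the preceding subsections is consumed: Lemma~\ref{spacialboundedness} supplies the spatial fractional bound and Lemma~\ref{timeboundedness} the temporal fractional bound, and these are precisely the two ingredients whose intersection yields compactness in $V$. I expect this embedding to be the main obstacle only in the sense of correctly invoking the right functional-analytic theorem; the probabilistic part of the argument is the standard Prokhorov-type reduction of tightness to a uniform moment bound via the Markov inequality, and presents no difficulty. I would therefore state the compact embedding explicitly (citing the appropriate reference for fractional Aubin--Lions compactness) and then present the short Markov inequality computation above to conclude.
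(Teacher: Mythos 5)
Your proposal is correct and follows essentially the same route as the paper's own proof: the compact embedding $\mathcal{W}\hookrightarrow\hookrightarrow V$ from \cite{gatarekflandoli}, the uniform bound of Lemma~\ref{stochcomp}, and the Markov inequality applied to the balls $B_{\mathcal W}(0,R)$. The only difference is your added commentary on why the embedding is compact, which the paper simply cites.
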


\begin{proof}
By \cite[Theorem 2.1]{gatarekflandoli} we know that $\mathcal{W}$ is compactly embedded in $V$. Let $\eps>0$ be arbitrary. For any $R>0$ the ball $B_\mathcal{W}(0,R):=\{v\in \mathcal{W}:\|v\|_\mathcal{W}\leq R\}$ is compact in $V$. There holds
\begin{equation*}
[\mathds{P}\circ(\uhnl)^{-1}](B_\mathcal{W}(0,R))=1-[\mathds{P}\circ(\uhnl)^{-1}](B_\mathcal{W}(0,R)^c)
=1-\int_{\{\|\uhnl\|_\mathcal{W}>R\}}1\,d\mathds{P}.
\end{equation*}
Then by using Markov inequality
\begin{equation*}
\int_{\{\|\uhnl\|_\mathcal{W}>R\}}1\, d\mathds{P}
\leq\frac{1}{R^2}\int_{\{\|\uhnl\|_\mathcal{W}>R\}}\|\uhnl\|_\mathcal{W}^2 \,d\mathds{P}
\leq \frac{1}{R^2}\erww{\|\uhnl\|_\mathcal{W}^2}
\leq\frac{\cter{K6}}{R^2}.
\end{equation*}
In the last inequality we used that $(\uhnl)_{h,N}$ is, thanks to Lemma~\ref{stochcomp}, bounded in $L^2(\Omega;\mathcal{W})$. It follows 
\begin{align*}
[\mathds{P}\circ(\uhnl)^{-1}](B_\mathcal{W}(0,R))\geq1-\frac{\cter{K6}}{R^2}.
\end{align*}
If we choose an appropriate $R$, the assertion follows.
\end{proof}

For the next lemmas, we recall that the initial value of Problem \eqref{equation} denoted $u_0$ is $\mathcal{F}_0$-measurable and belongs to $ L^2(\Omega;H^1(\Lambda))$. Moreover, its spatial discretization denoted $u_h^0$ is defined by \eqref{eq:def_u0}. In the following, we will write $(W(t))_{t\geq 0}=:W$, whenever the $t$-dependence is not relevant for the argumentation.

In order to apply Skorokhod theorem and to prove the almost sure convergence, we begin by proving the convergence in law.
\begin{lem}\label{convginlaw}
For $m\in\mathbb{N}^{\ast}$ we consider the sequence of random vectors 
\begin{align*}
Y_m=((\uhnlm,\uhnrm-\uhnlm,W,u_{h_m}^0)
\end{align*}
with values in 
\[\mathcal{X}:=L^2(0,T;\lzlambda)\times L^2(0,T;\lzlambda)\times C([0,T])\times L^2(\Lambda).\]
There exists a not relabeled subsequence of $(Y_m)_m$ converging in law, i.e., there exists a probability measure $\mu_{\infty}$ on $\mathcal{X}$ with marginal laws $\mu_{\infty}^1,\delta_0,\mathds{P}\circ W^{-1},\mathds{P}\circ (u_0)^{-1}$ such that
\[\erwb f(Y_m)\erwe\stackrel{m\rightarrow +\infty}{\longrightarrow}\int_{\mathcal{X}}f \,d\mu_{\infty}\]
for all bounded, continuous functions $f:\mathcal{X}\rightarrow\mathbb{R}$.
\end{lem}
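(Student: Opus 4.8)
The plan is to apply Prokhorov's theorem to the joint sequence of laws $(\mathds{P}\circ Y_m^{-1})_m$ on the Polish space $\mathcal{X}$, which requires verifying that this sequence is tight. Since tightness of a product is implied by tightness of each marginal (the product of compact sets being compact in the product topology, and a finite intersection of sets of measure $\geq 1-\eps$ having measure $\geq 1 - k\eps$), I would first reduce the problem to establishing tightness of each of the four coordinate sequences separately. For the first coordinate $(\uhnlm)_m$ on $L^2(0,T;\lzlambda)$, tightness is exactly the content of Lemma~\ref{uhnltight}, so that factor is already done.

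For the remaining three marginals I would argue as follows. The second coordinate $(\uhnrm-\uhnlm)_m$ converges to $0$ strongly in $L^2(\Omega;L^2(0,T;\lzlambda))$ by estimate~\eqref{210824_05} in the proof of Lemma~\ref{addreg u}; strong $L^2(\Omega;\cdot)$-convergence implies convergence in probability, hence convergence in law to the Dirac mass $\delta_0$, and a convergent-in-law sequence is automatically tight. This simultaneously identifies the second marginal of $\mu_\infty$ as $\delta_0$. The third coordinate is the fixed Brownian motion $W$, whose law on $C([0,T])$ does not depend on $m$; a single Radon measure on a Polish space is tight, so this factor is trivially tight with marginal $\mathds{P}\circ W^{-1}$. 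For the fourth coordinate $(u_{h_m}^0)_m$ on $L^2(\Lambda)$, I would use the bound $\|u_{h_m}^0\|_{L^2(\Lambda)}\leq \|u_0\|_{L^2(\Lambda)}$ from Lemma~\ref{bound_u0} together with the convergence $u_{h_m}^0\to u_0$ in $L^2(\Lambda)$ (a standard consequence of the definition~\eqref{eq:def_u0} as cellwise averages, since $u_0\in L^2(\Lambda)$ $\mathds{P}$-a.s.); again convergence in law gives tightness and pins the fourth marginal down as $\mathds{P}\circ(u_0)^{-1}$.

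Having established joint tightness, Prokhorov's theorem yields a subsequence (not relabeled) and a probability measure $\mu_\infty$ on $\mathcal{X}$ such that $\mathds{P}\circ Y_m^{-1}\rightharpoonup \mu_\infty$ narrowly, which is precisely the stated convergence $\erwb f(Y_m)\erwe\to\int_{\mathcal{X}}f\,d\mu_\infty$ for all bounded continuous $f$. The marginals of $\mu_\infty$ are the narrow limits of the marginal laws of $Y_m$, which by the above are $\mu_\infty^1$ (the unidentified limit of the first coordinate, whose existence is all we claim), $\delta_0$, $\mathds{P}\circ W^{-1}$ and $\mathds{P}\circ(u_0)^{-1}$ respectively. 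I expect the only genuinely delicate point to be the bookkeeping that reduces joint tightness to marginal tightness while simultaneously reading off the three identifiable marginals; the individual marginal tightness statements are each either already proved (first coordinate) or immediate from convergence in law (the other three), so no new estimate is needed here.
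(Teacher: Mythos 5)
Your proposal is correct and follows essentially the same route as the paper: reduce joint tightness on $\mathcal{X}$ to tightness of the four components (Lemma~\ref{uhnltight} for the first, the strong convergence \eqref{210824_05} for the second, constancy of $W$ for the third, and the a.s.\ convergence of the cellwise averages $u_{h_m}^0\to u_0$ for the fourth), then invoke Prokhorov and read off the marginals. If anything, your write-up is slightly more explicit than the paper's about applying Prokhorov to the \emph{joint} law rather than component by component, but the substance is identical.
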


\begin{proof}
We recall that a subsequence of $(Y_m)_m$ is tight if and only if all its components are tight. The tightness of laws of $(\uhnlm)_{m}$ was shown in Lemma \ref{uhnltight}. Then, from Prokhorov theorem (see \cite[Theorem 5.1]{Bil99}) it follows that, passing to a not relabeled subsequence if necessary, $(\uhnlm)_{m}$ converges in law towards a probability measure $\mu_{\infty}^1$ defined on $L^2(0,T;L^2(\Lambda))$.\\
Clearly, as a constant sequence, the Brownian motion $W$ converges in law towards $\mathds{P}\circ W^{-1}$. Since $(u_{h_m}^0)_m$ converges to $u_0$ in $L^2(\Lambda)$ for $m\rightarrow +\infty$ a.s. in $\Omega$ (see \cite[Proposition 3.5]{Andreianov}), it follows that $(u_{h_m}^0)_m$ converges in law towards $\mathds{P}\circ (u_0)^{-1}$.
From \eqref{210824_05} it follows that $(\uhnrm-\uhnlm)_m$ converges to $0$ for $m\rightarrow+\infty$ in $L^2(\Omega;L^2(0,T;L^2(\Lambda)))$ and this convergence implies for all bounded, continuous functions $f:L^2(0,T;L^2(\Lambda))\rightarrow\mathbb{R}$
\begin{align*}
\int_{L^2(0,T;L^2(\Lambda))}f \,d(\mathds{P}\circ(\uhnrm-\uhnlm)^{-1})=\erww{f(\uhnrm-\uhnlm)}\stackrel{m\rightarrow+\infty}{\longrightarrow}\erww{f(0)},
\end{align*}
hence the convergence in law of $(\uhnrm-\uhnlm)_m$ towards $\delta_0$.
\end{proof}

Thanks to Lemma \ref{convginlaw} we may apply the Skorokhod representation Theorem (see \cite[Theorem 6.7]{Bil99}): There exists a probability space $(\Omega',\mathcal{A}',\mathds{P}')$ and random variables
\[Y'_m = (v_m,z_m,B_m,v^0_m), \ u_{\infty}, W_{\infty}, v_0\]
with
\begin{align*}
&\mathds{P}'\circ(Y'_m)^{-1} = \mathds{P}\circ(Y_m)^{-1}  \ \text{for all} \ m\in\na,\\
&\mathds{P}'\circ(v_0)^{-1}=\mathds{P}\circ(u_0)^{-1},\\
&\mathds{P}'\circ (u_{\infty})^{-1}=\mu_{\infty}^1,\\
&\mathds{P}'\circ (W_{\infty})^{-1}=\mathds{P}\circ W^{-1}
\end{align*}
and such that
\begin{align}
\label{eq:conv}
\begin{aligned}
v_m &\stackrel{m\rightarrow+\infty}{\longrightarrow} u_\infty &&\text{ in }L^2(0,T;\lzlambda),\ \mathds{P'}\text{-a.s. in }\Omega'\\
z_m&\stackrel{m\rightarrow+\infty}{\longrightarrow} 0 &&\text{ in }L^2(0,T;\lzlambda),\ \mathds{P'}\text{-a.s. in }\Omega'\\
B_m&\stackrel{m\rightarrow+\infty}{\longrightarrow} W_\infty &&\text{ in }C([0,T]),\ \mathds{P'}\text{-a.s. in }\Omega'\\
v^0_m &\stackrel{m\rightarrow+\infty}{\longrightarrow} v_0&&\text{ in }L^2(\Lambda),\ \mathds{P'}\text{-a.s. in }\Omega'.
\end{aligned}
\end{align}
In Lemmas \ref{lem:id_vm} and \ref{220111_lem01} we will show that, thanks to equality in law, $v_m$ and $z_m$ are in fact finite-volume functions with the same piecewise constant structure as $\uhnlm$ and $\uhnrm-\uhnlm$, respectively.

\begin{lem}
\label{lem:id_vm}
For $m\in\mathbb{N}^{\ast}$ fixed, $v_m$ is a step function with respect to time and space in the sense that there exists $v_{h_m,N_m}^l\in \mathbb{R}^{d_{h_m}\times {N_m}}$ such that $\mathds{P}'$-a.s. in $\Omega'$ 
$v_m=v_{h_m,N_m}^l$.
Moreover, $v_{h_m,N_m}^l(0,x):=v_{h_m}^0(x)=v^0_m(x)$ for all $x\in\Lambda$
and, in particular $v^0_m=v_{h_m}^0$ is a spatial step function.
\end{lem}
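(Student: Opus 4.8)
The plan is to transfer, through the equality in law $\mathds{P}'\circ(Y'_m)^{-1}=\mathds{P}\circ(Y_m)^{-1}$ furnished by the Skorokhod representation, the fact that the original approximations live in a fixed finite-dimensional subspace and satisfy a fixed linear compatibility relation at $t=0$. The crucial point is that both properties can be phrased as Borel events of full $\mathds{P}$-probability, so that equality in law forces them to hold $\mathds{P}'$-a.s. for $Y'_m$ as well. First I would introduce the finite-dimensional subspace
\[
E_m:=\Big\{\sum_{n=0}^{N_m-1}\sum_{K\in\Tau_m}a_K^n\,\mathds{1}_K\,\mathds{1}_{[t_n,t_{n+1})}:\ (a_K^n)\in\mathbb{R}^{d_{h_m}\times N_m}\Big\}\subset V
\]
together with its spatial counterpart $E_m^{\mathrm{sp}}:=\operatorname{span}\{\mathds{1}_K:K\in\Tau_m\}\subset\lzlambda$. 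Being finite-dimensional, both are closed, hence Borel subsets of the separable Hilbert spaces $V$ and $\lzlambda$. Since $\uhnlm\in E_m$ and $u^0_{h_m}\in E_m^{\mathrm{sp}}$ hold surely by construction, the laws $\mathds{P}\circ(\uhnlm)^{-1}$ and $\mathds{P}\circ(u^0_{h_m})^{-1}$ are concentrated on $E_m$ and $E_m^{\mathrm{sp}}$, respectively. The marginal identities of the Skorokhod construction then yield $\mathds{P}'(v_m\in E_m)=1$ and $\mathds{P}'(v^0_m\in E_m^{\mathrm{sp}})=1$, which is exactly the asserted step-function structure of $v_m$ and $v^0_m$.

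Once this is known, the coefficients are recovered by the bounded linear, hence Borel measurable, cell-averaging functionals
\[
P_n^K(w):=\frac{1}{m_K\Delta t}\int_{t_n}^{t_{n+1}}\int_K w(t,x)\,dx\,dt,\qquad Q^K(d):=\frac{1}{m_K}\int_K d(x)\,dx,
\]
defined on $V$ and on $\lzlambda$. Setting $v_K^n:=P_n^K(v_m)$ produces a random vector $v_{h_m,N_m}^l=(v_K^n)\in\mathbb{R}^{d_{h_m}\times N_m}$ with $v_m=v_{h_m,N_m}^l$ $\mathds{P}'$-a.s., and $v^0_m$ is the spatial step function with coefficients $Q^K(v^0_m)$.

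It remains to identify the $n=0$ slice of $v_m$ with the separate component $v^0_m$. In the original variables the fourth component of $Y_m$ is precisely the discrete initial datum $u^0_{h_m}$, whose $n=0$ slice coincides with $\uhnlm(0,\cdot)$ by \eqref{eq:notation_wh}; hence $P_0^K(\uhnlm)=Q^K(u^0_{h_m})\,(=u_K^0)$ for every $K\in\Tau_m$, surely. Consequently the Borel set
\[
D:=\big\{(a,b,c,d)\in\mathcal{X}:\ P_0^K(a)=Q^K(d)\ \text{for all }K\in\Tau_m\big\}
\]
satisfies $\mathds{P}\circ(Y_m)^{-1}(D)=1$, and by equality in law $\mathds{P}'\circ(Y'_m)^{-1}(D)=1$, so that $P_0^K(v_m)=Q^K(v^0_m)$ for all $K$, $\mathds{P}'$-a.s. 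Combining this with the step-function structure already obtained, namely $P_0^K(v_m)=v_K^0$ and $Q^K(v^0_m)=(v^0_m)_K$, gives $v_K^0=(v^0_m)_K$ for every $K$. Writing $v_{h_m}^0:=(v_K^0)_{K\in\Tau_m}$ and recalling that $v_{h_m,N_m}^l(0,\cdot)=v_{h_m}^0$ holds by the definition of the left reconstruction in \eqref{eq:notation_wh}, this is exactly the claimed chain of equalities $v_{h_m,N_m}^l(0,\cdot)=v_{h_m}^0=v^0_m$.

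The main obstacle is conceptual rather than computational: one must be disciplined about expressing each structural statement (``$v_m$ is a step function'', ``its trace at $t=0$ equals $v^0_m$'') as a Borel event and about verifying that the corresponding event carries full probability under $\mathds{P}\circ(Y_m)^{-1}$, since equality in law transfers only measurable events. This is exactly where finite dimensionality (yielding closed, hence Borel, subspaces) and the continuity of the averaging functionals (yielding Borel compatibility sets) are indispensable; no pathwise reconstruction of $v_m$ on $\Omega'$ is required. In particular one should check that the identities invoked for the original process hold \emph{surely}, not merely almost surely, so that the associated events genuinely have probability one.
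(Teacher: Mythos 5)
Your proposal is correct and follows essentially the same route as the paper: both arguments transfer the step-function structure and the compatibility at $t=0$ through the equality of laws by exhibiting Borel sets (or non-negative Borel functionals) of full measure under $\mathds{P}\circ(Y_m)^{-1}$. The only difference is that where the paper invokes \cite[Lemma A3]{VZ21} to identify $v_m$ and $v_m^0$ with elements of $\mathbb{R}^{d_{h_m}\times N_m}$ and $\mathbb{R}^{d_{h_m}}$, you reprove that step directly via the closedness of the finite-dimensional subspaces and the continuous cell-averaging functionals, which makes your version self-contained but not substantively different.
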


\begin{proof}
By \cite[Lemma A3]{VZ21} with $E=L^2(0,T;L^2(\Lambda))$ and $F=\mathbb{R}^{d_{h_m}\times N_m}$ it follows that there exists $(v_K^n)_{\substack{K\in\Tau_m \\ n\in\{0,\ldots,N_m-1\}}}$ in $\mathbb{R}^{d_{h_m}\times{N_m}}$ such that
\begin{align*}
v_m &\equiv(v_K^n)_{\substack{K\in\Tau_m \\ n\in\{0,\ldots,N_m-1\}}} \text{$\mathds{P}'-$a.s. in} \ \Omega'.
\end{align*}
In the same manner with $E=L^2(\Lambda)$ and $F=\mathbb{R}^{d_{h_m}}$ it follows that there exists $(\tilde{v}_K^0)_{K\in\Tau_m}$ in $\mathbb{R}^{d_{h_m}}$ such that
\begin{align*}
v_m^0&\equiv (\tilde{v}_K^0)_{K\in\Tau_m} \ \text{$\mathds{P}'$-a.s. in} \ \Omega'.
\end{align*}
We recall the notation of Subsection \ref{discretenotation} and in particular that 
\begin{align*}
\uhnlm &\equiv (u_K^n)_{\substack{K\in\Tau_m \\ n\in\{0,\ldots,N_m-1\}}}\text{$\mathds{P}-$a.s. in} \ \Omega.
\end{align*} 
For any $K\in\Tau_m$ we consider the non-negative, Borel measurable mapping 
\begin{align*}
\xi_K^0:\mathbb{R}^{d_{h_m}}\times\mathbb{R}^{d_{h_m}\times N_m}&\rightarrow\mathbb{R} \\ 
((a_M)_M, (b_M^k)_{M,k})&\mapsto |a_K-b_K^0|.
\end{align*}
Since 
\begin{align*}
\mathds{P}\circ ((u_M^{0})_{M}, (u_M^{k})_{M,k}))^{-1}=\mathds{P}'\circ ((\tilde{v}_M^0)_{M}, (v_M^k)_{M,k})^{-1},
\end{align*}
it follows that
\begin{align*}
0=\erww{\xi_K^0((u_M^{0})_{M}, (u_M^{k})_{M,k})}=\erws{\xi_K^0((\tilde{v}_M^0)_{M}, (v_M^k)_{M,k})}=\erws{|\tilde{v}_K^0-v_K^0|}
\end{align*}
and therefore, for all $x\in K$ and all $K\in\Tau_m$, $v_m(0,x)=v_K^0=\tilde{v}_K^0=v_m^0(x)$ $\mathds{P}'$-a.s. in $\Omega'$.
\end{proof}

\begin{lem}\label{220111_lem01}
For $m\in\mathbb{N}^{\ast}$ fixed, $z_m(t,x)=v^{n+1}_K-v_K^n$ for all $(t,x)\in (t_n,t_{n+1}]\times K$ and $\mathds{P}'$-a.s. in $\Omega'$ for any $K\in\Tau_m$ and $n\in\{0,\ldots,N_m-1\}$, where $(v_K^n)_{\substack{K\in\Tau_m \\ n\in\{0,\ldots,N_m-1\}}}$ is defined as in the proof of Lemma \ref{lem:id_vm}.
\end{lem}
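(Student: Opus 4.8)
The plan is to mimic the strategy of the proof of Lemma~\ref{lem:id_vm}: first identify $z_m$ as a space--time step function, and then transfer, via equality in law, the deterministic relation that links the coefficients of $\uhnrm-\uhnlm$ to those of $\uhnlm$. I would begin by applying \cite[Lemma A3]{VZ21} to $z_m$ with $E=L^2(0,T;L^2(\Lambda))$ and $F=\mathbb{R}^{d_{h_m}\times N_m}$. On the original space $\uhnrm-\uhnlm$ takes values $\mathds{P}$-a.s. in the finite-dimensional subspace of $L^2(0,T;L^2(\Lambda))$ made of functions that are constant on each $K\in\Tau_m$ and on each interval $[t_n,t_{n+1})$; since $z_m$ has the same law as $\uhnrm-\uhnlm$ (these are the second components of $Y_m'$ and $Y_m$), the cited lemma furnishes a family $(z_K^n)_{K\in\Tau_m,\,n\in\{0,\dots,N_m-1\}}\in\mathbb{R}^{d_{h_m}\times N_m}$ with $z_m\equiv(z_K^n)$ $\mathds{P}'$-a.s., that is $z_m(t,x)=z_K^n$ for $(t,x)\in(t_n,t_{n+1}]\times K$ (the half-open interval differing from $[t_n,t_{n+1})$ only on a null set).

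Next I would recover the increments. By the definition \eqref{eq:notation_wh} of the right and left reconstructions, on the original space one has, for every $K\in\Tau_m$ and every $n\in\{0,\dots,N_m-2\}$, the pathwise identity
\[
(\uhnrm-\uhnlm)\text{-coefficient on }[t_n,t_{n+1})=u_K^{n+1}-u_K^n,
\]
whose right-hand side is a linear function of the coefficients of $\uhnlm$, since both $u_K^{n+1}$ and $u_K^n$ carry indices in $\{0,\dots,N_m-1\}$. Mirroring the functional $\xi_K^0$ of Lemma~\ref{lem:id_vm}, for each such $n$ I would introduce the non-negative Borel map
\[
\xi_K^n\big((a_M^k)_{M,k},(b_M^k)_{M,k}\big):=\big|b_K^n-(a_K^{n+1}-a_K^n)\big|
\]
on $\mathbb{R}^{d_{h_m}\times N_m}\times\mathbb{R}^{d_{h_m}\times N_m}$. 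Since the joint law of the coefficient vectors of $(\uhnlm,\uhnrm-\uhnlm)$ equals that of $(v_m,z_m)$ (by $\mathds{P}'\circ(Y_m')^{-1}=\mathds{P}\circ(Y_m)^{-1}$ together with the step-function identifications of $v_m$ and $z_m$), and the pathwise identity above gives $\erww{\xi_K^n(\ldots)}=0$ on $\Omega$, equality in law yields $\erws{\xi_K^n((v_M^k),(z_M^k))}=0$, hence $z_K^n=v_K^{n+1}-v_K^n$ $\mathds{P}'$-a.s. for all $n\in\{0,\dots,N_m-2\}$.

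The terminal index $n=N_m-1$ is the one point requiring care: the identity would involve $v_K^{N_m}$, which is not among the coefficients produced by Lemma~\ref{lem:id_vm}, precisely because the coefficient $u_K^{N_m}$ of $\uhnrm$ is not seen by $\uhnlm$. I would therefore simply set $v_K^{N_m}:=v_K^{N_m-1}+z_K^{N_m-1}$, so that $z_K^{N_m-1}=v_K^{N_m}-v_K^{N_m-1}$ holds by definition and the claimed identity extends to $n=N_m-1$. Collecting the two cases and passing from coefficients back to functions on $(t_n,t_{n+1}]\times K$ gives the assertion.

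I expect the argument to be, apart from this terminal step, a direct transcription of the equality-in-law bookkeeping already used in Lemma~\ref{lem:id_vm}; the only genuinely non-routine issue is to verify that introducing $v_K^{N_m}$ by hand is consistent and that it is this value which will later play the role of $u_K^{N_m}$ when reconstructing the analogue $v_{h_m,N_m}^r$ of $\uhnrm$. Everything else reduces to checking measurability of the functionals $\xi_K^n$ and to the elementary fact that, on the relevant finite-dimensional subspace, the correspondence between a step function and its coefficient vector is a Borel isomorphism, so that joint laws transfer from functions to coefficients.
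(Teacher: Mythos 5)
Your proposal is correct and follows essentially the same route as the paper: identify $z_m$ as a space--time step function via \cite[Lemma A3]{VZ21}, then transfer the pathwise identity $u_K^{n+1}-u_K^n=z$-coefficient through equality of joint laws using a non-negative Borel functional (your $\xi_K^n$ is, up to a sign inside the absolute value, the paper's $\Phi_K^n$). Your explicit treatment of the terminal index $n=N_m-1$ by setting $v_K^{N_m}:=v_K^{N_m-1}+z_K^{N_m-1}$ is a point the paper's proof silently glosses over, and is a legitimate refinement rather than a deviation.
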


\begin{proof}
With similar arguments as in Lemma \ref{lem:id_vm} it follows that there exists \linebreak $(z_K^{n})_{\substack{K\in\Tau_m \\ n\in\{0,\ldots,N_m-1\}}}\in\mathbb{R}^{d_{h_m}\times{N_m}}$ such that
\begin{align*}
z_m\equiv(z_K^{n})_{\substack{K\in\Tau_m \\ n\in\{0,\ldots,N_m-1\}}} \ \text{$\mathds{P}'-$a.s. in} \ \Omega'.
\end{align*}
For any fixed $K\in\Tau_m$, $n\in \{0,\ldots,N_m-1\}$, the mapping
\begin{align*}
\Phi_K^n:\mathbb{R}^{d_{h_m}\times {N_m}}\times\mathbb{R}^{d_{h_m}\times N_m}\rightarrow\mathbb{R}, \ 
((a_M^k)_{M,k}, (b_M^k)_{M,k})\mapsto|a_K^{n+1}-a_K^n-b_K^n|
\end{align*}
is non-negative and Borel measurable. Since
\[\mathds{P}\circ ((u_M^k)_{M,k}, (u_M^{k+1}-u_M^k)_{M,k})^{-1}=\mathds{P}'\circ ((v_M^k)_{M,k}, (z_M^k)_{M,k})^{-1},\]
it follows that for any $K\in\Tau_m$ and all $n\in\{0,\ldots,N_m-1\}$
\begin{align*}
0&=\erwb\Phi_K^n((u_M^k)_{M,k}, (u_M^{k+1}-u_M^k)_{M,k})\erwe=\erws{\Phi_K^n((v_M^k)_{M,k}, (z_M^k)_{M,k})}\\
&=\erws{|v_K^{n+1}-v_K^n-z_K^n|}.
\end{align*}
Therefore, for all $K\in\Tau_m$ and all $n\in\{0,\ldots,N_m-1\}$ there holds $z_K^n=v_K^{n+1}-v_K^n$ $\mathds{P}'$-a.s. in $\Omega'$.
\end{proof}

Next we prove that the finite-volume function $(v_h^n)_{1\leq n\leq N}$ we have just constructed verifies the following numerical scheme.

\begin{lem}
For $m\in\mathbb{N}^{\ast}$ fixed, any $n\in\{0,\dots,N_m-1\}$ and any $K\in\Tau_m$, $v_K^{n+1}$ satisfies the semi-implicit equation
\begin{align}\label{newsemiimplicitequ}
\frac{m_K}{\Delta t}(v_K^{n+1}-v_K^n)+\sum_{\sigma\in\edgesint\cap\edges_K}\frac{m_\sigma}{\dkl}(v_K^{n+1}-v_L^{n+1})-\frac{m_K}{\Delta t}g(v_K^n)\Delta_{n+1}B_m=0
\end{align}
$\mathds{P}'$-a.s. in $\Omega'$, where $\Delta_{n+1}B_m:=B_m(t_{n+1})-B_m(t_n)$.
\end{lem}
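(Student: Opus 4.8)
The plan is to transfer the discrete equation \eqref{equationapprox}, which is known to hold for the original unknowns $(u_K^n)$ by Proposition~\ref{210609_prop1}, to the new unknowns $(v_K^n)$ using only the equality in law delivered by Skorokhod's theorem. This is exactly the mechanism already used in the proofs of Lemmas~\ref{lem:id_vm} and~\ref{220111_lem01}: one encodes the defect of the scheme as a non-negative Borel measurable functional of the random data, observes that its expectation vanishes for the original tuple, and concludes that it vanishes almost surely for the new tuple because the two tuples share the same law.

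First I would recall that, $\mathds{P}$-a.s.\ and for every $K\in\Tau_m$ and $n\in\{0,\ldots,N_m-1\}$, the original discrete solution satisfies
\[
\frac{m_K}{\Delta t}(u_K^{n+1}-u_K^n)+\sum_{\sigma\in\edgesint\cap\edges_K}\frac{m_\sigma}{\dkl}(u_K^{n+1}-u_L^{n+1})=\frac{m_K}{\Delta t}g(u_K^n)\Delta_{n+1}W.
\]
Through the identifications $\uhnlm\equiv(u_K^n)$ and $\uhnrm-\uhnlm\equiv(u_K^{n+1}-u_K^n)$, every quantity appearing here is a component of the random vector $Y_m=(\uhnlm,\uhnrm-\uhnlm,W,u_{h_m}^0)$; in particular $u_K^{n+1}=u_K^n+(u_K^{n+1}-u_K^n)$ is recovered from the first two coordinates, while the increment $\Delta_{n+1}W=W(t_{n+1})-W(t_n)$ is a continuous, hence Borel, functional of the path $W\in C([0,T])$.

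Next, for fixed $K$ and $n$ I would introduce the non-negative, Borel measurable map
\[
\Psi_K^n\big((a_M^k),(b_M^k),w\big)=\Big|\tfrac{m_K}{\Delta t}b_K^n+\sum_{\sigma\in\edgesint\cap\edges_K}\tfrac{m_\sigma}{\dkl}\big((a_K^n+b_K^n)-(a_L^n+b_L^n)\big)-\tfrac{m_K}{\Delta t}g(a_K^n)\big(w(t_{n+1})-w(t_n)\big)\Big|,
\]
defined on $\mathbb{R}^{d_{h_m}\times N_m}\times\mathbb{R}^{d_{h_m}\times N_m}\times C([0,T])$, where measurability follows from the continuity of $g$ (hypothesis $H_2$) together with continuity of the evaluation map on $C([0,T])$. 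Evaluated on the original data, $\Psi_K^n(\uhnlm,\uhnrm-\uhnlm,W)$ is precisely the absolute value of the defect in the displayed scheme, so $\erww{\Psi_K^n(\uhnlm,\uhnrm-\uhnlm,W)}=0$.

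Finally, since Skorokhod's theorem yields $\mathds{P}'\circ(v_m,z_m,B_m)^{-1}=\mathds{P}\circ(\uhnlm,\uhnrm-\uhnlm,W)^{-1}$, I would conclude
\[
\erws{\Psi_K^n(v_m,z_m,B_m)}=\erww{\Psi_K^n(\uhnlm,\uhnrm-\uhnlm,W)}=0,
\]
so that nonnegativity forces $\Psi_K^n(v_m,z_m,B_m)=0$ $\mathds{P}'$-a.s. Recalling from Lemmas~\ref{lem:id_vm} and~\ref{220111_lem01} that $z_K^n=v_K^{n+1}-v_K^n$ and rewriting $a_K^n+b_K^n=v_K^{n+1}$ then gives exactly \eqref{newsemiimplicitequ}. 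The only delicate point is the measurability bookkeeping that makes $\Psi_K^n$ a genuine Borel function of the data, namely that $v_K^{n+1}$ is read off as $v_K^n+z_K^n$ and that the increment $\Delta_{n+1}B_m$ is obtained continuously from the path $B_m$; once this is in place, the equality-in-law transfer is immediate.
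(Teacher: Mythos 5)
Your proposal is correct and follows essentially the same route as the paper: both encode the defect of the scheme as a non-negative Borel functional $\Psi_K^n$ of the tuple $((v_M^k),(z_M^k),B_m)$, use the equality of laws with $((u_M^k),(u_M^{k+1}-u_M^k),W)$ furnished by Skorokhod's theorem to transfer the vanishing expectation, and then invoke Lemma~\ref{220111_lem01} to identify $v_K^n+z_K^n=v_K^{n+1}$. No substantive difference from the paper's argument.
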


\begin{proof}
From Lemma \ref{220111_lem01} it follows that $z_K^n=v_K^{n+1}-v_K^n$, $\mathds{P}'$-a.s. in $\Omega'$ for all $K\in\Tau_m$ and all $n\in\{0,\ldots,N_m-1\}$.
Then, for arbitrary $K\in\Tau_m$, $n\in\{0,\dots,N_m-1\}$ the mapping
\begin{align*}
\Psi_K^n:\mathbb{R}^{d_{h_m}\times N_m}\times\mathbb{R}^{d_{h_m}\times N_m} \times C([0,T])&\rightarrow\mathbb{R}\\
((a_M^k)_{M,k}, (b_M^k)_{M,k},f)&\mapsto\bigg|\frac{m_K}{\Delta t}b_K^n+\hspace*{-0.5cm}\sum_{\sigma\in\edgesint\cap\edges_K}\frac{m_\sigma}{\dkl}(b_K^n+a_K^n)-(b_L^n+a_L^n)\\
&\qquad-\frac{m_K}{\Delta t}g(a_K^n)(f(t_{n+1})-f(t_n))\bigg|
\end{align*}
is non-negative and Borel measurable. Since
\[\mathds{P}\circ ((u_M^k)_{M,k}, (u_M^{k+1}-u_M^k)_{M,k}, W)^{-1}=\mathds{P}'\circ ((v_M^k)_{M,k}, (z_M^k)_{M,k}, B_m)^{-1},\]
from Proposition \ref{210609_prop1} it follows that
\begin{align*}
0&=\erww{\Psi_K^n((u_M^k)_{M,k}, (u_M^{k+1}-u_M^k)_{M,k}, W)}=\erws{\Psi_K^n ((v_M^k)_{M,k}, (z_M^k)_{M,k}, B_m)}\\
&=\erws{\left|\frac{m_K}{\Delta t}(v_K^{n+1}-v_K^n)+\sum_{\sigma\in\edgesint\cap\edges_K}\frac{m_\sigma}{\dkl}(v_K^{n+1}-v_L^{n+1})-\frac{m_K}{\Delta t}g(v_K^n)\Delta_{n+1}B_m\right|}.
\end{align*}
Thus, for all $K\in\Tau_m$, $n\in\{0,\dots,N_m-1\}$ and $\mathds{P}'$-a.s. in $\Omega'$
\begin{align*}
0=\frac{m_K}{\Delta t}(v_K^{n+1}-v_K^n)+\sum_{\sigma\in\edgesint\cap\edges_K}\frac{m_\sigma}{\dkl}(v_K^{n+1}-v_L^{n+1})-\frac{m_K}{\Delta t}g(v_K^n)\Delta_{n+1}B_m.
\end{align*}
\end{proof}

\subsection{Identification of the stochastic integral}\label{S2}
In this subsection, we adapt ideas taken from \cite{Debussche}, \cite{BreitFeireisl}, \cite{OPW20} and adjust the arguments to our specific situation. We show that each $B_m$ is a Brownian motion with respect to the filtration given in Definition \ref{filtration2}. With this result at hand, we may show that $(W_{\infty}(t))_{t\geq 0}$ is a Brownian motion with respect to the filtration given in Definition \ref{filtration}. In Lemma \ref{measurability} we prove that $u_{\infty}$ is admissible for the stochastic It\^{o} integral with respect to $(W_{\infty}(t))_{t\geq 0}$. Finally, in Lemma \ref{martingaleIdentification} we provide an approximation result for the stochastic It\^{o} integrals.

\begin{defi}\label{filtration2} 
For $t\in [0,T]$ we define $\mathcal{F}_t^{m}$ to be the smallest sub-$\sigma$-field of $\mathcal{A}'$ generated by $v_m^0$ and $B_m(s)$ for $0\leq s\leq t$. The right-continuous, $\mathds{P}'$-augmented filtration of $(\mathcal{F}_t^{m})_{t\in [0,T]}$ denoted by $(\mathfrak{F}_t^m)_{t\in[0,T]}$ is defined by
\[ \mathfrak{F}_t^m:=\bigcap_{s>t}\sigma\left[\mathcal{F}_s^m\cup \{\mathcal{N}\in\mathcal{A}':\mathds{P}'(\mathcal{N})=0\}\right]\]
for any $t\in [0,T]$.
\end{defi}

\begin{remark}
We recall that for the augmented filtration and for given processes $(X_t)_{t\geq0}$, $(Y_t)_{t\geq0}$ such that $(X_t)_{t\geq0}$ is adapted and $Y_t=X_t$ holds a.s. for all $t$, it holds true that $(Y_t)_{t\geq0}$ is also adapted (see, e.g., \cite[p.35]{Baldi}).
\end{remark}

\begin{lem}\label{lemWNBM}
$(v_m)_m$ is adapted to $(\mathfrak{F}_t^m)_{t\in[0,T]}$ and $(B_{m}(t))_{t\in[0,T]}$ is a Brownian motion with respect to $(\mathfrak{F}_t^m)_{t\in[0,T]}$.
\end{lem}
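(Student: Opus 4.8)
The plan is to transfer both assertions from the original stochastic basis $(\Omega,\mathcal{A},\mathds{P},(\mathcal{F}_t)_{t\geq0})$ to the new one $(\Omega',\mathcal{A}',\mathds{P}')$, exploiting the equality in law $\mathds{P}'\circ(Y'_m)^{-1}=\mathds{P}\circ(Y_m)^{-1}$ from Lemma~\ref{convginlaw} and the uniqueness of the discrete solution in Proposition~\ref{210609_prop1}. The adaptedness of $(v_m)_m$ will come from the deterministic, measurable dependence of the scheme on its data, while the Brownian motion property will follow by combining law-only facts with a transfer of the increment-independence.

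First I would prove that $(v_m)_m$ is adapted. On the original space, iterating the scheme \eqref{equationapprox} expresses $u_h^n$ as a Borel function $G_n$ of $u_{h_m}^0$ and the increments $\Delta_1 W,\dots,\Delta_n W$: at each step $u_h^{n+1}$ is obtained from $u_h^n$ and $\Delta_{n+1}W$ by solving the fixed, invertible linear system associated with the coercive bilinear form $a$ of Proposition~\ref{210609_prop1}, a map that is continuous (since $g$ is continuous) and hence Borel. On the new space the $v_h^n$ satisfy, by \eqref{newsemiimplicitequ}, the very same recursion driven by $B_m$ and started from $v^0_m$; by the pathwise uniqueness in Proposition~\ref{210609_prop1} the same map yields $v_h^n=G_n(v^0_m,\Delta_1 B_m,\dots,\Delta_n B_m)$, $\mathds{P}'$-a.s. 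Since each $\Delta_{k+1}B_m=B_m(t_{k+1})-B_m(t_k)$ with $k\leq n-1$ and $v^0_m$ are $\mathcal{F}_{t_n}^m$-measurable by Definition~\ref{filtration2}, the value $v_h^n$ is $\mathcal{F}_{t_n}^m$-measurable, hence $\mathfrak{F}_{t_n}^m$-measurable. As $v_m(t)=v_h^n$ for $t\in[t_n,t_{n+1})$ by Lemma~\ref{lem:id_vm} and the filtration is increasing, $v_m(t)$ is $\mathfrak{F}_t^m$-measurable for every $t$, so $(v_m)_m$ is adapted.

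Next I would treat the Brownian motion property. Since $\mathds{P}'\circ B_m^{-1}=\mathds{P}\circ W^{-1}$ on $C([0,T])$, the process $B_m$ has continuous paths, satisfies $B_m(0)=0$ $\mathds{P}'$-a.s., and has Gaussian increments with mean zero and variance $t-s$; these depend on the law alone. Adaptedness of $B_m$ to $(\mathfrak{F}_t^m)$ is immediate from Definition~\ref{filtration2}. The substantive point is that, for $0\leq s<t\leq T$, the increment $B_m(t)-B_m(s)$ is independent of $\mathfrak{F}_s^m$. On the original space, $W$ is an $(\mathcal{F}_t)$-Brownian motion and $u_{h_m}^0$ is $\mathcal{F}_0$-measurable, so $\sigma(u_{h_m}^0,W(r):r\leq s)\subseteq\mathcal{F}_s$ and therefore $W(t)-W(s)$ is independent of $(u_{h_m}^0,(W(r))_{r\leq s})$. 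For any finite set of times $r_1<\dots<r_k\leq s$, this independence is a property of the joint law of $(u_{h_m}^0,W(r_1),\dots,W(r_k),W(t)-W(s))$; by the equality in law of $(v^0_m,B_m)$ and $(u_{h_m}^0,W)$ the same factorization holds for $(v^0_m,B_m(r_1),\dots,B_m(r_k),B_m(t)-B_m(s))$. A $\pi$-system/monotone class argument then gives that $B_m(t)-B_m(s)$ is independent of $\mathcal{F}_s^m=\sigma(v^0_m,B_m(r):r\leq s)$.

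Finally I would upgrade $\mathcal{F}_s^m$ to the augmented, right-continuous $\mathfrak{F}_s^m$. Adjoining the $\mathds{P}'$-null sets preserves the independence just obtained. For right-continuity, choosing $s<r_n\downarrow s$, Definition~\ref{filtration2} gives $\mathfrak{F}_s^m\subseteq\sigma[\mathcal{F}_{r_n}^m\cup\{\mathcal{N}\in\mathcal{A}':\mathds{P}'(\mathcal{N})=0\}]$; by the previous step $B_m(t)-B_m(r_n)$ is independent of this $\sigma$-field, hence of $\mathfrak{F}_s^m$. Letting $n\to\infty$, path continuity yields $B_m(t)-B_m(r_n)\to B_m(t)-B_m(s)$ $\mathds{P}'$-a.s., and independence from the fixed $\sigma$-field $\mathfrak{F}_s^m$ passes to this almost sure limit, for instance by testing with characteristic functions. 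Thus $B_m(t)-B_m(s)$ is independent of $\mathfrak{F}_s^m$, which concludes the proof. I expect the main obstacle to be precisely this independence part: phrasing the increment-independence as a joint-law statement so that equality in law transfers it, and then carrying it through the augmented right-continuous filtration; everything else is either a law-only property of $B_m$ or a direct consequence of \eqref{newsemiimplicitequ} together with the uniqueness in Proposition~\ref{210609_prop1}.
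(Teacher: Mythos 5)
Your proof is correct, and the adaptedness part is essentially the paper's argument (the paper simply invokes ``the same arguments as in the proof of Proposition~\ref{210609_prop1}'', i.e.\ the recursive measurable dependence of $v_h^{n+1}$ on $v_h^n$ and $\Delta_{n+1}B_m$ that you spell out). For the Brownian motion property, however, you take a genuinely different route. You verify the definition directly: the law-only facts (continuity, $B_m(0)=0$, centred Gaussian increments of variance $t-s$) transfer immediately, and you obtain independence of $B_m(t)-B_m(s)$ from $\mathcal{F}_s^m$ by transferring the factorization of the finite-dimensional joint laws of $(u_{h_m}^0,W)$ and closing with a $\pi$-system argument, then push this through the augmentation and the right-continuous regularization via $r_n\downarrow s$ and path continuity. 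The paper instead tests the increments $W(t)-W(s)$ and $W^2(t)-W^2(s)-(t-s)$ against bounded continuous functionals $\psi(u_{h_m}^0,W|_{[0,s]})$, transfers these identities by equality in law, upgrades them to conditional-expectation statements via Doob--Dynkin, concludes that $B_m$ and $B_m^2(\cdot)-\cdot$ are $(\mathfrak{F}_t^m)$-martingales, and invokes L\'evy's characterization. Your argument is more elementary (no martingale theory, no L\'evy characterization) and exploits the specific Gaussian structure available at this finite-$m$ stage; the paper's functional-test formulation is chosen because it is the template reused verbatim in Lemma~\ref{210830_lem1} for the limit process $W_\infty$, where the filtration also involves $\int_0^\cdot u_\infty\,dr$ and one must pass to the limit inside the test functionals, so a direct increment-independence transfer is no longer available. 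Both proofs are complete; just make sure, in your final step, to note that for $t>s$ one takes $n$ large enough that $r_n<t$, which you implicitly do.
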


\begin{proof}
Since $(\mathfrak{F}_t^m)_{t\in[0,T]}$ is a filtration induced by $v_m^0$ and $B_{m}$, in particular $v_m^0$ is $\mathfrak{F}^m_0$-measurable. Thus, applying the same arguments as in the proof of Proposition \ref{210609_prop1}, from \eqref{newsemiimplicitequ} it follows that $v_m$ is adapted to $(\mathfrak{F}_t^m)_{t\in[0,T]}$.
Since $\mathds{P}'\circ (B_{m})^{-1}=\mathds{P}\circ W^{-1}$, we get the following results:
\begin{itemize}
\item $\erws{|B_{m}(0)|}=\erwb|W(0)|\erwe=0$, hence $B_{m}(0)=0$ $\mathds{P}'$-a.s. in $\Omega'$.
\item By Burkholder-Davis-Gundy inequality there exists a constant $C_B>0$ such that
\begin{align}\label{WNsupbound}
\erws{\sup_{t\in[0,T]}|B_{m}(t)|^2}=\erww{\sup_{t\in[0,T]}|W(t)|^2}\leq C_B T^\halbe<\infty.
\end{align}
\item For all $0\leq s\leq t\leq T$ and all bounded, continuous functions $\psi:C_b(L^2(\Lambda)\times C([0,s]))\rightarrow\mathbb{R}$
\begin{align}\label{211001_04}
0=\erww{(W(t)-W(s))\psi(u_{h_m}^0,W|_{[0,s]})}
=\erws{(B_{m}(t)-B_{m}(s))\psi(v_m^0,B_{m}|_{[0,s]})},
\end{align}
and
\begin{align}\label{211001_05}
\begin{aligned}
0&=\erww{(W^2(t)-W^2(s)-(t-s))\psi(u_{h_m}^0,W|_{[0,s]})}\\
&=\erws{(B_{m}^2(t)-B_{m}^2(s)-(t-s))\psi(v_{m}^0,B_{m}|_{[0,s]})}
\end{aligned}
\end{align}
\end{itemize}
Recalling Definition \ref{filtration2}, $\mathcal{F}_t^m=\sigma_t(v_m^0,B_{m})$ for $t\in [0,T]$. The real-valued random variable 
\[\Omega'\ni\omega'\mapsto\psi(v_m^0(\omega'), B_{m}|_{[0,s]}(\omega'))\]
is $\mathcal{F}_s^m$-measurable. Using the properties of conditional expectation from \eqref{211001_04} it follows that
\begin{align}\label{211001_06}
\begin{aligned}
0=&\erws{(B_{m}(t)-B_{m}(s))\psi(v_m^0,B_{m}|_{[0,s]})}\\
=&\erws{\mathbb{E}'\left((B_{m}(t)-B_{m}(s))\psi(v_m^0,B_{m}|_{[0,s]})|\,\mathcal{F}^m_s\right)}\\
=&\erws{\psi(v_m^0,B_{m}|_{[0,s]})\mathbb{E}'\left(B_{m}(t)-B_{m}(s)|\,\mathcal{F}^m_s\right)}.
\end{aligned}
\end{align}
Since \eqref{211001_06} applies to every bounded and continuous function $\psi:C_b(L^2(\Lambda)\times C([0,s]))\rightarrow\mathbb{R}$, from the Lemma of Doob-Dynkin (see, e.g., \cite[Proposition 3]{RS06}) it follows that
\begin{align*}
0=\erws{\mathds{1}_A\mathbb{E}'\left(B_{m}(t)-B_{m}(s)|\,\mathcal{F}^m_s\right)}
\end{align*}
for all $\mathcal{F}^m_s$-measurable subsets $A\in \mathcal{A}'$ and for all $0\leq s\leq t\leq T$.  From the above equation it now follows that $\mathbb{E}'\left(B_{m}(t)-B_{m}(s)|\,\mathcal{F}^m_s\right)=0$ $\mathds{P}'$-a.s. in $\Omega'$ for all $0\leq s\leq t\leq T$ and therefore $(B_{m}(t))_{t\in [0,T]}$ is a martingale with respect to $(\mathcal{F}^m_t)_{t\in [0,T]}$.
Using \cite[p.75]{DM80} we may conclude that $(B_{m}(t))_{t\in [0,T]}$ is also a martingale with respect to the augmented filtration $(\mathfrak{F}^m_t)_{t\in[0,T]}$.
With similar arguments from \eqref{211001_05} it follows that $((B_{m}(t))^2-t)_{t\in [0,T]}$ is a martingale with respect to $(\mathfrak{F}^m_t)_{t\in[0,T]}$ and consequently the quadratic variation process $\langle\langle B_{m}\rangle\rangle_t$ of $(B_{m}(t))_{t\in[0,T]}$ is given by $t$ for all $t\in[0,T]$ (for the Definition of the quadratic variation of a stochastic process see \cite[Definition 2.19]{Prohl}).
Summarizing the above results, $(B_{m}(t))_{t\in[0,T]}$ is a square integrable martingale with respect to $(\mathfrak{F}^m_t)_{t\in[0,T]}$ starting in $0$ with almost surely continuous paths and quadratic variation $\langle\langle B_m\rangle\rangle_t=t$. From \cite[Theorem 3.11]{DPZ14} $(B_{m}(t))_{t\in[0,T]}$ is a Brownian motion with respect to $(\mathfrak{F}_t^m)_{t\in[0,T]}$.
\end{proof}

In the following, we want to show firstly that the stochastic process $(W_{\infty}(t))_{t\in [0,T]}:=W_{\infty}$ is a Brownian motion and secondly that a filtration may be chosen in order to have compatibility of $u_{\infty}$ with stochastic integration in the sense of It\^{o} with respect to $W_{\infty}$. Since $u_{\infty}$ is a random variable taking values in $L^2(0,T;L^2(\Lambda))$, $u_{\infty}(t,\cdot)$ is only defined for a.e. $t\in [0,T]$ and the construction of an appropriate filtration induced by $u_{\infty}$ becomes delicate. 

\begin{defi}\label{filtration}
For $t\in [0,T]$ let $\mathcal{F}_t^{\infty}$ be the smallest sub-$\sigma$-field of $\mathcal{A}'$ generated by $v_0$, $W_{\infty}(s)$ and $\int_0^s u_{\infty}(r)\,dr$ for $0\leq s\leq t$. 
The right-continuous, $\mathds{P}'$-augmented filtration of $(\mathcal{F}_t^{\infty})_{t\in [0,T]}$ denoted by $(\mathfrak{F}_t^\infty)_{t\in[0,T]}$ is defined by
\[\mathfrak{F}_t^\infty:=\bigcap_{s>t}\sigma\left[\mathcal{F}_s^{\infty}\cup\{\mathcal{N}\in\mathcal{A}': \mathds{P}'(\mathcal{N})=0\}\right]\]
for $t\in [0,T]$.
\end{defi}

In the following, we will show that $W_{\infty}$ is a Brownian motion with respect to $(\mathfrak{F}_t^\infty)_{t\in[0,T]}$ and $u_{\infty}$ admits a $(\mathfrak{F}_t^\infty)_{t\in[0,T]}$-predictable representative.

\begin{lem}\label{210830_lem1}
There holds $B_{m}\stackrel{m\rightarrow+\infty}{\longrightarrow} W_\infty$ in $L^2(\Omega';C([0,T]))$ and $(W_\infty(t))_{t\in[0,T]}$ is a Brownian motion with respect to $(\mathfrak{F}_t^\infty)_{t\in[0,T]}$.
\end{lem}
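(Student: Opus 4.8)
The plan is to prove the two assertions in turn: first the upgrade of the almost sure convergence in \eqref{eq:conv} to convergence in $L^2(\Omega';C([0,T]))$, and then the identification of $W_\infty$ as a Brownian motion for the filtration $(\mathfrak{F}_t^\infty)_{t\in[0,T]}$ via L\'evy's characterization, adapting the argument of Lemma \ref{lemWNBM}.

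For the convergence, \eqref{eq:conv} already provides $B_m\to W_\infty$ in $C([0,T])$, $\mathds{P}'$-a.s. in $\Omega'$, so it suffices to establish uniform integrability of $(\|B_m-W_\infty\|_{C([0,T])}^2)_m$ and invoke Vitali's theorem. Since $\mathds{P}'\circ(B_m)^{-1}=\mathds{P}'\circ(W_\infty)^{-1}=\mathds{P}\circ W^{-1}$, the Burkholder-Davis-Gundy inequality applied to the genuine Brownian motion $W$ gives $\sup_m\erws{\|B_m\|_{C([0,T])}^4}=\erww{\sup_{t\in[0,T]}|W(t)|^4}<\infty$ and the same fourth-moment bound for $W_\infty$; hence $(\|B_m-W_\infty\|_{C([0,T])}^2)_m$ is bounded in $L^2(\Omega')$, which supplies the required uniform integrability.

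For the Brownian motion property, $W_\infty$ has continuous, square integrable paths and satisfies $W_\infty(0)=0$ $\mathds{P}'$-a.s. by equality in law with $W$, and it is adapted to $(\mathfrak{F}_t^\infty)_{t\in[0,T]}$ by construction of the filtration. It then remains to show that $(W_\infty(t))_{t\in[0,T]}$ and $((W_\infty(t))^2-t)_{t\in[0,T]}$ are martingales with respect to $(\mathfrak{F}_t^\infty)_{t\in[0,T]}$, after which L\'evy's characterization (\cite[Theorem 3.11]{DPZ14}, already used in Lemma \ref{lemWNBM}) concludes. I would obtain these martingale properties by passing to the limit in the corresponding identities for $B_m$. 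Fix $0\leq s\leq t\leq T$ and a bounded, continuous function $\psi$ on the relevant product space. Because $v_m$ is adapted to $(\mathfrak{F}_r^m)_{r\in[0,T]}$ by Lemma \ref{lemWNBM}, the functional $\Phi_m:=\psi(v_m^0,\,B_m|_{[0,s]},\,(\int_0^r v_m\,d\rho)_{r\in[0,s]})$ is $\mathfrak{F}_s^m$-measurable, so that the martingale properties of $B_m$ and of $B_m^2-t$ from Lemma \ref{lemWNBM} yield $\erws{(B_m(t)-B_m(s))\,\Phi_m}=0$ and $\erws{(B_m^2(t)-B_m^2(s)-(t-s))\,\Phi_m}=0$.

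The passage to the limit is the crux. By \eqref{eq:conv} one has, $\mathds{P}'$-a.s., $v_m^0\to v_0$ in $L^2(\Lambda)$, $B_m\to W_\infty$ in $C([0,T])$ and $v_m\to u_\infty$ in $L^2(0,T;\lzlambda)$, the last of which implies $\int_0^\cdot v_m\,d\rho\to\int_0^\cdot u_\infty\,d\rho$ in $C([0,T];\lzlambda)$ through the bound $\|\int_0^r(v_m-u_\infty)\,d\rho\|_{\lzlambda}\leq\sqrt{T}\,\|v_m-u_\infty\|_{L^2(0,T;\lzlambda)}$. Continuity of $\psi$ then yields almost sure convergence of the integrands, while the fourth-moment bounds above, combined with the boundedness of $\psi$, supply the uniform integrability needed to pass the limit inside the expectation. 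This produces $\erws{(W_\infty(t)-W_\infty(s))\,\Phi}=0$ and $\erws{(W_\infty^2(t)-W_\infty^2(s)-(t-s))\,\Phi}=0$, where $\Phi:=\psi(v_0,\,W_\infty|_{[0,s]},\,(\int_0^r u_\infty\,d\rho)_{r\in[0,s]})$. Since these hold for every bounded, continuous $\psi$, the Doob-Dynkin lemma (exactly as in Lemma \ref{lemWNBM}) upgrades them to $\erws{W_\infty(t)-W_\infty(s)\mid\mathcal{F}_s^\infty}=0$ and $\erws{W_\infty^2(t)-W_\infty^2(s)-(t-s)\mid\mathcal{F}_s^\infty}=0$ $\mathds{P}'$-a.s., hence the martingale properties for $(\mathcal{F}_s^\infty)$ and, after the standard augmentation argument, for $(\mathfrak{F}_s^\infty)$. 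The main obstacle I anticipate is the mismatch of filtrations: $\mathfrak{F}_s^m$ is generated by $v_m^0$ and $B_m$, whereas the target $\mathfrak{F}_s^\infty$ is generated by $v_0$, $W_\infty$ and the time-integrals $\int_0^\cdot u_\infty\,d\rho$, which must be used in place of pointwise values of $u_\infty$ since the latter is only defined for a.e.\ $t$. The device bridging the two is precisely the adaptedness of $v_m$, which lets me insert $\int_0^\cdot v_m\,d\rho$ into the test functional at the discrete level and recover $\int_0^\cdot u_\infty\,d\rho$ in the limit.
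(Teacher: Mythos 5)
Your proposal is correct and follows the same overall strategy as the paper: upgrade the almost sure convergence of $B_m$ to convergence in $L^2(\Omega';C([0,T]))$, establish the two martingale identities against cylindrical test functionals $\psi(v_0,W_\infty|_{[0,s]},\int_0^{\cdot}u_\infty\,dr|_{[0,s]})$ by passing to the limit in the corresponding identities at level $m$, and conclude with Doob--Dynkin, the augmentation argument and the L\'evy-type characterization of \cite[Theorem 3.11]{DPZ14}, exactly as at the end of Lemma \ref{lemWNBM}. Two sub-steps differ in mechanism, though not in substance. First, for the $L^2(\Omega';C([0,T]))$ convergence the paper notes that $\erws{\sup_t|B_m(t)|^2}=\erws{\sup_t|W_\infty(t)|^2}$ for every $m$ (equality of laws) and invokes a Br\'ezis--Lieb-type lemma \cite[Lemma A2]{VZ21}, whereas you obtain uniform integrability of $\|B_m-W_\infty\|^2_{C([0,T])}$ from the fourth-moment bound $\erww{\sup_t|W(t)|^4}<\infty$ and apply Vitali; your route costs a higher moment but is more standard, and the fourth moment is in any case what one needs to pass to the limit in the identity for $B_m^2-t$, a point the paper glosses over with ``dominated convergence''. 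Second, for the pre-limit identities the paper transfers the whole expectation back to the original space via equality of the joint laws of $(v_m^0,B_m,v_m)$ and $(u_{h_m}^0,W,u^l_{h_m,N_m})$ and uses there that $W$ is an $(\mathcal{F}_t)$-martingale and the test functional is $\mathcal{F}_s$-measurable; you instead work entirely on $\Omega'$, using the adaptedness of $v_m$ and the martingale property of $B_m$ with respect to $(\mathfrak{F}^m_t)$ established in Lemma \ref{lemWNBM}. Your version requires the small additional observation that $r\mapsto\int_0^r v_m\,d\rho$ restricted to $[0,s]$ is $\mathfrak{F}^m_s$-measurable (clear since $v_m$ is an adapted step function), while the paper's law-transfer avoids any measurability check on the new space; both are valid and yield the same limit identities.
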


\begin{proof}
Combining \eqref{WNsupbound} with $\mathds{P}'\circ (W_{\infty})^{-1}=\mathds{P}\circ W^{-1}$ it follows that
\begin{align*}
\erws{\sup_{t\in[0,T]}|W_{\infty}(t)|^2}=\erww{\sup_{t\in[0,T]}|W(t)|^2}<\infty
\end{align*}
and consequently $W_{\infty}\in L^2(\Omega';C([0,T]))$. Moreover, since $\mathds{P}'\circ B_{m}^{-1}=\mathds{P}\circ W^{-1}$, it follows that
\begin{align*}
\erws{\sup_{t\in[0,T]}|B_{m}(t)|^2}=\erws{\sup_{t\in[0,T]}|W_{\infty}(t)|^2},\quad\forall m\in\na^\ast.
\end{align*}
We already know, that, for $m\rightarrow+\infty$, $B_{m}$ converges to $W_\infty$ in $C([0,T])$ a.s. in $\Omega'$. Therefore, a version of the Lemma of Brézis and Lieb (see \cite[Lemma A2]{VZ21}) provides the desired convergence result in $L^2(\Omega';C([0,T]))$. From
\[\mathds{P}'\circ (v_m^0,B_{m}, v_m)^{-1}=\mathds{P}\circ(u_{h_m}^0,W,u_{h_m,N_m}^l)^{-1}\]
it follows that for any $0\leq s\leq t\leq T$ and every bounded and continuous function $\psi:L^2(\Lambda)\times C([0,s])\times C([0,s];L^2(\Lambda))\rightarrow\mathbb{R}$, we have
\begin{align}\label{question7}
\begin{aligned}
&\erws{(B_{m}(t)-B_{m}(s))\psi\left(v_m^0, B_{m}|_{[0,s]}, \int_0^{\cdot} v_m(r)\,dr|_{[0,s]}\right)}\\
&=\erww{(W(t)-W(s))\psi\left(u_{h_m}^0, W|_{[0,s]}, \int_0^{\cdot} u_{h_m,N_m}^l(r)\,dr|_{[0,s]}\right)}.
\end{aligned}
\end{align}
Now, using the fact that $u_{h_m}^0$ is $\mathcal{F}_0$-measurable, by construction $\int_0^s \uhnlm(r)\,dr$ is $\mathcal{F}_s$-measurable for all $m\in\mathbb{N}$ and that $(W(t))_{t\geq 0}$ is a martingale with respect to $(\mathcal{F}_t)_{t\geq 0}$ one gets that
\begin{align}\label{211001_01}
\erww{(W(t)-W(s))\psi\left(u_{h_m}^0, W|_{[0,s]}, \int_0^{\cdot} u_{h_m,N_m}^l(r)\,dr|_{[0,s]}\right)}=0.
\end{align}
We recall that, $\mathds{P}'$-a.s. in $\Omega'$, $v_m^0\rightarrow v_0$ in $L^2(\Lambda)$ and $B_{m}\rightarrow W_{\infty}$ in $C([0,T])$, hence also in $C([0,s])$ for all $0\leq s\leq t\leq T$. Moreover, by Cauchy-Schwarz inequality,
\begin{align*}
&\left\| \int_0^{\cdot} v_m(r)\,dr-\int_0^{\cdot} u_{\infty}(r)\,dr\right\|_{C([0,s];L^2(\Lambda))}^2
=\sup_{z\in [0,s]}\left\|\int_0^z \left(v_m(r)-u_{\infty}(r)\right)\,dr\right\|_{L^2(\Lambda)}^2\\
&\leq \sup_{z\in [0,s]}\left(\int_0^z \| v_m(r)-u_{\infty}(r)\|_{L^2(\Lambda)}\,dr\right)^2\leq T\int_0^T\| v_m(r)-u_{\infty}(r)\|_{L^2(\Lambda)}^2\,dr.
\end{align*}
Since $v_m\stackrel{m\rightarrow+\infty}{\longrightarrow} u_{\infty}$ in $L^2(0,T;L^2(\Lambda))$ $\mathds{P}'$-a.s. in $\Omega'$, it follows that
\[\int_0^{\cdot}v_m(r)\,dr\stackrel{m\rightarrow+\infty}{\longrightarrow}\int_0^{\cdot} u_{\infty}(r)\,dr\]
in $C([0,s];L^2(\Lambda))$, $\mathds{P}'$-a.s. in $\Omega'$. Using the convergence of $B_{m}$ towards $W_{\infty}$ in $L^2(\Omega';C([0,T]))$, the convergence results from above and Lebesgue's dominated convergence theorem it follows that
\begin{align}\label{211001_02}
\begin{aligned}
&\lim_{m\rightarrow\infty}\erws{(B_{m}(t)-B_{m}(s))\psi\left(v_m^0, B_{m}|_{[0,s]}, \int_0^{\cdot} v_m(r)\,dr|_{[0,s]}\right)}\\
&=\erws{(W_{\infty}(t)-W_{\infty}(s))\psi\left(v_0, W_{\infty}|_{[0,s]}, \int_0^{\cdot} u_{\infty}(r)\,dr|_{[0,s]}\right)}.
\end{aligned}
\end{align}
Now, combining \eqref{question7}, \eqref{211001_01} and \eqref{211001_02} we obtain
\begin{align}\label{211001_07}
\erws{(W_{\infty}(t)-W_{\infty}(s))\psi\left(v_0, W_{\infty}|_{[0,s]}, \int_0^{\cdot} u_{\infty}(r)\,dr|_{[0,s]}\right)}=0.
\end{align}
From \eqref{211001_07} it follows that
\begin{align*}
\mathbb{E}'\left(W_{\infty}(t)-W_{\infty}(s)|\,\mathcal{F}^{\infty}_s\right)=0
\end{align*}
$\mathds{P}'$-a.s. in $\Omega'$ for all $0\leq s\leq t\leq T$.
With similar arguments as used for equation \eqref{211001_07}, we also get
\begin{align*}
&\erws{(W^2_{\infty}(t)-W^2_{\infty}(s)-(t-s))\psi\left(v_0, W_{\infty}|_{[0,s]}, \int_0^{\cdot} u_{\infty}(r)\,dr|_{[0,s]}\right)}=0.
\end{align*}
Now, using a similar argumentation as in the end of the proof of Lemma \ref{lemWNBM}, it follows that $(W_{\infty}(t))_{t\in[0,T]}$ is a Brownian motion with respect to $(\mathfrak{F}_t^{\infty})_{t\in[0,T]}$.
\end{proof}

By \cite[Theorem 2.6.3]{BreitFeireisl} it is always possible to choose $(\Omega',\mathcal{A}',\mathds{P}')=([0,1],\mathcal{B}([0,1]),\lambda)$, where $\mathcal{B}([0,1])$ denotes the Borel sets on $[0,1]$ and $\lambda$ denotes the Lebesgue measure on $[0,1]$. We will need this particular choice of the new probability space in the proof of the following Lemma.

We recall that, for a filtered probability space $(\Omega,\mathcal{A}, \mathds{P})$ with $\mathcal{F}=(\mathcal{F}_t)_{t\geq 0}$ and $T>0$, the predictable $\sigma$-field on $\Omega\times [0,T]$ is the $\sigma$-field generated by the sets 
\[(s,t]\times F_s, \ 0\leq s<t\leq T, \ F_s\in \mathcal{F}_s \ \text{and} \ \{0\}\times F_0, \ F_0\in\mathcal{F}_0.\]
For more details on stochastic integration in infinite dimension, we refer to \cite{DPZ14}.

\begin{lem}\label{measurability}
There exists a $(\mathfrak{F}_t^\infty)_{t\in[0,T]}$-predictable, $d\mathds{P}'\otimes dt$-representative of $u_\infty$.
\end{lem}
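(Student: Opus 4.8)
The plan is to construct the desired representative as a time-derivative of the continuous, adapted primitive $t\mapsto \int_0^t u_\infty(r)\,dr$, exploiting that this primitive is, by the very definition of $(\mathcal{F}_t^\infty)_{t\in[0,T]}$, adapted to $(\mathfrak{F}_t^\infty)_{t\in[0,T]}$. First I would fix a jointly measurable representative of the random variable $u_\infty$: since $L^2(0,T;\lzlambda)$ is separable and, by the remark preceding the lemma, we may assume $(\Omega',\mathcal{A}',\mathds{P}')=([0,1],\mathcal{B}([0,1]),\lambda)$ is a standard Borel space, there is a product-measurable map $(t,\omega')\mapsto u_\infty(t,\omega')\in\lzlambda$ representing $u_\infty$. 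Set $U(t):=\int_0^t u_\infty(r)\,dr$. As $u_\infty(\cdot,\omega')\in L^2(0,T;\lzlambda)\subset L^1(0,T;\lzlambda)$ for $\mathds{P}'$-a.e. $\omega'$, the paths of $U$ are continuous (indeed absolutely continuous) with values in $\lzlambda$. Moreover, $U(t)$ is $\mathcal{F}_t^\infty$-measurable by construction, hence $U$ is adapted to $(\mathfrak{F}_t^\infty)_{t\in[0,T]}$; being continuous and adapted with values in the Polish space $\lzlambda$, the process $U$ is $(\mathfrak{F}_t^\infty)_{t\in[0,T]}$-predictable.

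Next I would introduce the backward difference quotients
$$D_k(t):=k\Big(U(t)-U\big((t-\tfrac1k)\vee 0\big)\Big)=k\int_{(t-1/k)\vee 0}^t u_\infty(r)\,dr,\qquad k\in\na^\ast.$$
For each fixed $k$, the process $t\mapsto U\big((t-\tfrac1k)\vee 0\big)$ is continuous and its value at time $t$ is $\mathfrak{F}^\infty_{(t-1/k)\vee0}$-measurable, hence a fortiori $\mathfrak{F}^\infty_t$-measurable; thus it is again continuous and adapted, therefore predictable. Consequently each $D_k$ is $(\mathfrak{F}_t^\infty)_{t\in[0,T]}$-predictable and $\lzlambda$-valued. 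The point of using the backward quotient (rather than a symmetric or forward one) is precisely that it only involves values of $U$ up to the current time, which is what makes it adapted.

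Then I would invoke the Lebesgue differentiation theorem for Bochner integrals: for $\mathds{P}'$-a.e. $\omega'$ and Lebesgue-a.e. $t\in(0,T)$ one has $D_k(t)\to u_\infty(t)$ in $\lzlambda$ as $k\to\infty$. By Fubini's theorem this convergence holds for $dt\otimes d\mathds{P}'$-a.e. $(t,\omega')$. Let $A:=\{(t,\omega'):(D_k(t,\omega'))_{k}\text{ is a Cauchy sequence in }\lzlambda\}$; since $\lzlambda$ is separable, $A$ is described by countably many conditions on the predictable maps $D_k$ and is therefore a predictable set, and on $A$ the limit $\lim_k D_k$ exists and is predictable as a pointwise limit of predictable maps. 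Defining $\tilde u_\infty:=\mathds{1}_A\lim_{k}D_k$ (and $\tilde u_\infty:=0$ off $A$) yields a $(\mathfrak{F}_t^\infty)_{t\in[0,T]}$-predictable map which, by the previous step, coincides with $u_\infty$ for $dt\otimes d\mathds{P}'$-a.e. $(t,\omega')$; this is the claimed representative.

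The main obstacle is the measurability bookkeeping rather than any hard estimate: one must guarantee a genuinely jointly measurable version of $u_\infty$ so that $U$ and the quotients $D_k$ are well-defined measurable processes, and one must verify carefully that the convergence set $A$ belongs to the predictable $\sigma$-field. This is exactly where the separability of $\lzlambda$ (so that norm-measurability reduces to countably many scalar conditions) and the standard-Borel choice of $(\Omega',\mathcal{A}',\mathds{P}')$ enter; the differentiation step itself is classical once these structural points are settled.
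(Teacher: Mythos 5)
Your argument is correct, and it shares its core mechanism with the paper's proof: both build predictable approximants by backward time-averaging of $u_\infty$ (the paper's $u_\infty^\delta(t)=\tfrac1\delta\int_{(t-\delta)^+}^t u_\infty(s)\,ds$ is exactly your $D_k$ with $\delta=1/k$), and both rest on the fact that a continuous $(\mathfrak{F}_t^\infty)_{t\in[0,T]}$-adapted process is predictable — which is precisely why the filtration was generated by the primitives $\int_0^s u_\infty(r)\,dr$. Where you genuinely diverge is in the a.e.\ identification of the limit with $u_\infty$. The paper treats $t\mapsto u_\infty(t)$ as a function with values in $L^1(\Omega';L^1(\Lambda))$, applies the vector-valued Lebesgue differentiation theorem there, and then, to upgrade to $d\mathds{P}'\otimes dt$-a.e.\ convergence, inserts the truncations $T_k$, uses dominated convergence in $L^1(0,T;L^1(\Omega';L^1(\Lambda)))$, extracts an a.e.\ convergent subsequence, and reconstructs $u_\infty=\sup_k T_k(u_\infty)$; this route relies on the identification $L^1(\Omega';L^1(0,T;L^1(\Lambda)))\cong L^1(0,T;L^1(\Omega';L^1(\Lambda)))$ and hence on the choice $(\Omega',\mathcal{A}',\mathds{P}')=([0,1],\mathcal{B}([0,1]),\lambda)$. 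You instead apply the differentiation theorem pathwise ($\omega'$ fixed, with values in $\lzlambda$) and combine it with Fubini, which eliminates the truncations, the iterated-$L^1$ identification and the subsequence extraction, at the price of fixing a jointly measurable representative up front and checking that the convergence set is predictable — both of which you handle correctly (the Cauchy set is a countable Boolean combination of sets of the form $\{\|D_m-D_n\|_{\lzlambda}\le 1/j\}$, each predictable since the norm of a predictable $\lzlambda$-valued process is a predictable real process). Your version is shorter, needs only separability rather than the standard-Borel identification, and gives convergence of the full sequence $D_k$; the paper's version keeps every limit at the level of Banach-space norms and thereby avoids any explicit discussion of pointwise convergence sets.
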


\begin{proof}
For $\delta>0$ we define $u_\infty^\delta:\Omega'\times[0,T]\rightarrow L^2(\Lambda)$ by
\begin{align*}
u_{\infty}^{\delta}(t):=\frac{1}{\delta}\int_{(t-\delta)^+}^{t}u_\infty(s)\,ds=\frac{1}{\delta}\left(\int_0^t u_\infty(s)\,ds-\int_0^{(t-\delta)^+}u_\infty(s)\,ds\right)
\end{align*}
where the integrals on the right-hand side are understood as Bochner integrals with values in $L^2(\Lambda)$. Since $u_{\infty}^{\delta}$ is an $(\mathfrak{F}_t^\infty)_{t\in[0,T]}$-adapted stochastic process with a.s. continuous paths, it is predictable with respect to $(\mathfrak{F}^{\infty}_t)_{t\in[0,T]}$.
For fixed $k\in\na$ the cut-off function $T_k:\re\rightarrow [-k,k]$ defined by $T_k(r):=r$ if $|r|<k$ and $T_k(r):=\sign(r)k$ if $|r|\geq k$ induces a continuous operator $L^2(\Lambda)\ni v\mapsto T_k(v)\in L^2(\Lambda)$. Hence the stochastic process
\begin{align*}
\Omega'\times[0,T]\ni(\omega',t)\mapsto T_k(u_\infty^\delta(\omega',t))\in L^2(\Lambda)
\end{align*}
is $(\mathfrak{F}_t^\infty)_{t\in[0,T]}$-predictable. Again we recall that, $\mathds{P}'$-a.s. in $\Omega'$, $v_m\stackrel{m\rightarrow+\infty}{\longrightarrow} u_{\infty}$ in $L^2(0,T;L^2(\Lambda))$, hence also in $L^1(0,T;L^2(\Lambda))$ and therefore
\[\lim_{m\rightarrow+\infty}\int_0^T \| v_m(t)\|_{L^2(\Lambda)}\,dt=\int_0^T \| u_{\infty}(t)\|_{L^2(\Lambda)}\,dt\]
$\mathds{P}'$-a.s. in $\Omega'$. Using Fatou's lemma, $\mathds{P}'\circ(v_m)^{-1}=\mathds{P}\circ(\uhnlm)^{-1}$, and the Cauchy-Schwarz inequality we obtain
\begin{multline*}
\erws{\int_0^T\| u_{\infty}(t)\|_{L^2(\Lambda)}\,dt}\leq\liminf_{m\rightarrow\infty}\erws{\int_0^T \| v_m(t)\|_{L^2(\Lambda)}\,dt}\\
=\liminf_{m\rightarrow\infty}\erww{\int_0^T \| \uhnlm(t)\|_{L^2(\Lambda)}\,dt}\leq \sqrt{T}\liminf_{m\rightarrow\infty}\| \uhnlm\|^2_{L^2(\Omega;L^2(0,T;L^2(\Lambda)))}.
\end{multline*}
From Lemma \ref{210611_lem01} it follows that the right-hand side of the equation is uniformly bounded and consequently, $u_{\infty}\in L^1(\Omega';L^1(0,T;L^2(\Lambda)))$.
In particular, $u_{\infty}\in L^1(\Omega';L^1(0,T;L^1(\Lambda)))$. Since $(\Omega',\mathcal{A}',\mathds{P}')=([0,1],\mathcal{B}([0,1]);\lambda)$ according to \cite[Remark after Proposition 1.8.1]{D} we have
\[L^1(\Omega';L^1(0,T;L^1(\Lambda)))\cong L^1(\Omega'\times(0,T);L^1(\Lambda))\cong L^1(0,T;L^1(\Omega';L^1(\Lambda))).\]
For almost every  $t\in (0,T)$ and $0<\delta< t$ we have
\begin{align*}
\left\|u_\infty^\delta(t)-u_\infty(t)\right\|_{L^1(\Omega';L^1(\Lambda))}&=\left\|\frac{1}{\delta}\int_{(t-\delta)^+}^t\left(u_\infty(s)-u_\infty(t)\right)\,ds\right\|_{L^1(\Omega';L^1(\Lambda))}\\
&\leq \frac{1}{\delta}\int_{(t-\delta)^+}^{t}\|u_\infty(s)-u_\infty(t)\|_{L^1(\Omega';L^1(\Lambda))}\,ds.
\end{align*}
By the generalisation of Lebesgue differentiation theorem for vector-valued functions (see, e.g. \cite[Theorem 9, Chapter II]{DU}) the right-hand side of the above inequality goes to $0$ for $\delta\rightarrow 0^{+}$ for almost every $t\in (0,T)$, hence $u_{\infty}^{\delta}(t)\rightarrow u_{\infty}(t)$ a.e in $L^1(\Omega';L^1(\Lambda))$ for $\delta\rightarrow 0^{+}$.
Then, Lebesgue's dominated convergence theorem provides
\begin{align*}
\lim_{\delta\rightarrow 0^+}T_k(u_\infty^\delta)=T_k(u_\infty)
\end{align*}
in $L^1(0,T;L^1(\Omega';L^1(\Lambda)))$, thus also in  $L^1(\Omega'\times(0,T);L^1(\Lambda))$.\\
Thus, passing to a not relabeled subsequence if necessary, $(T_k(u_\infty^\delta(\omega',t)))_{\delta>0}$ 
converges for almost every $(\omega',t)$ in $\Omega'\times(0,T)$ to $T_k(u_\infty(\omega',t))$ in$L^1(\Lambda)$ as $\delta\rightarrow 0^+$ and therefore $T_k(u_\infty(\omega',t))$ has a $d\mathds{P}'\otimes dt$ representative that is $(\mathfrak{F}_t^\infty)_{t\in[0,T]}$-predictable for every $k\in\mathbb{N}$. Obviously there holds
\begin{align*}
u_\infty(\omega', t)=\sup_{k\in\na} T_k(u_\infty(\omega', t))\text{\ in $L^1(\Lambda)$ for a.e. $(\omega', t)$ in } \Omega'\times(0,T),
\end{align*}
where the set of measure zero can be chosen independently of $k\in\na$. This provides the existence of a $d\mathds{P}'\otimes dt$ representative of $u_\infty$ that is $(\mathfrak{F}_t^\infty)_{t\in[0,T]}$-predictable.
\end{proof}

\begin{lem}\label{martingaleIdentification}
For $t\in [0,T]$, $x\in\Lambda$ and $\mathds{P}'$-a.s. in $\Omega'$ we define the stochastic processes
\begin{align*}
\mathcal{M}_{h_m,N_m}(t,x)&:=\int_0^t g(v_{h_m,N_m}^l(s,x))\,dB_{m}(s)\\
\mathcal{M}_{\infty}(t,x)&:=\int_0^t g(u_{\infty}(s,x))\,dW_{\infty}(s).
\end{align*}
Then, passing to a not relabeled subsequence if necessary,
\begin{align}\label{210830_02}
\mathcal{M}_{h_m,N_m}\stackrel{m\rightarrow+\infty}{\longrightarrow} \mathcal{M}_{\infty}
 \text{ in } L^2(0,T;L^2(\Lambda)) \ \mathds{P}'\text{-a.s. in } \Omega'.
\end{align}
\end{lem}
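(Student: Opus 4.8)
The plan is to identify $\mathcal{M}_\infty$ as the limit of the discrete stochastic integrals $\mathcal{M}_{h_m,N_m}$ by invoking a convergence result for stochastic integrals in the spirit of \cite{Debussche} and \cite{BreitFeireisl}, and then to extract an almost surely convergent subsequence. The essential difficulty is that, although on the new probability space we have the strong convergences $v_m\to u_\infty$ in $L^2(0,T;\lzlambda)$ and $B_m\to W_\infty$ in $C([0,T])$, $\mathds{P}'$-a.s. (see \eqref{eq:conv} and Lemma \ref{210830_lem1}), the integrator changes with $m$: we integrate $g(v_m)$ against $B_m$ but $g(u_\infty)$ against $W_\infty$. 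Hence one cannot compare the two integrals directly through the It\^o isometry, and the passage to the limit in integrand and integrator must be carried out simultaneously. Accordingly, I would check the hypotheses of such a convergence lemma and conclude convergence in probability in $L^2(0,T;\lzlambda)$, from which \eqref{210830_02} follows along a subsequence.

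First I would record the convergence of the integrands: since $g$ is Lipschitz by $H_2$ and $v_m\to u_\infty$ in $L^2(0,T;\lzlambda)$ $\mathds{P}'$-a.s., one gets $g(v_m)\to g(u_\infty)$ in $L^2(0,T;\lzlambda)$ $\mathds{P}'$-a.s. Next, the adaptedness requirements are already prepared. By Lemma \ref{lemWNBM}, $v_m$ is adapted to $(\mathfrak{F}_t^m)_{t\in[0,T]}$ and, being a step function in time with the grid structure of \eqref{eq:notation_wh}, $g(v_m)$ is a legitimate It\^o integrand (predictable) with respect to $(\mathfrak{F}_t^m)_{t\in[0,T]}$, while $B_m$ is a Brownian motion with respect to the same filtration. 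On the limit side, Lemma \ref{measurability} provides a $(\mathfrak{F}_t^\infty)_{t\in[0,T]}$-predictable representative of $u_\infty$, hence of $g(u_\infty)$ by continuity of $g$, and by Lemma \ref{210830_lem1} $W_\infty$ is a Brownian motion with respect to $(\mathfrak{F}_t^\infty)_{t\in[0,T]}$. Finally, the uniform bound $\sup_m\erws{\int_0^T\|g(v_m)\|_\lzlambda^2\,dt}<\infty$ follows from $H_2$, \eqref{H3}, the equality of laws $\mathds{P}'\circ(v_m)^{-1}=\mathds{P}\circ(\uhnlm)^{-1}$ and the uniform $L^2(\Omega;L^2(0,T;\lzlambda))$-bound of Lemma \ref{210611_lem01}.

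With these ingredients the convergence lemma yields $\mathcal{M}_{h_m,N_m}\to\mathcal{M}_\infty$ in probability in $L^2(0,T;\lzlambda)$ (in fact in $C([0,T];\lzlambda)$), and passing to a not relabeled subsequence gives the $\mathds{P}'$-a.s. convergence \eqref{210830_02}. The main obstacle is precisely the simultaneous passage to the limit in integrand and integrator, which is what forces the use of the general lemma rather than the It\^o isometry: the auxiliary integral $\int_0^{\cdot} g(u_\infty)\,dB_m$ is not even defined, since $g(u_\infty)$ is not adapted to $(\mathfrak{F}_t^m)_{t\in[0,T]}$. Under the hood, this difficulty is handled by approximating the integrands by elementary (piecewise constant in time) processes, a step that is essentially free here because $g(v_m)$ already has this structure, then passing to the limit for elementary integrands by means of the uniform convergence $B_m\to W_\infty$, and finally controlling the approximation error uniformly in $m$ through the uniform $L^2$-bound and the It\^o isometry.
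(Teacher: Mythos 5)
Your proposal is correct and follows essentially the same route as the paper: the paper likewise notes that $B_m\to W_\infty$ in probability in $C([0,T])$ (from Lemma \ref{210830_lem1}), that $g(v_{h_m,N_m}^l)\to g(u_\infty)$ in probability in $L^2(0,T;L^2(\Lambda))$ by the Lipschitz property of $g$ and \eqref{eq:conv}, and then applies Lemma 2.1 of \cite{Debussche} to get convergence in probability of the stochastic integrals, extracting an a.s.\ convergent subsequence at the end. Your additional verification of the adaptedness hypotheses and the uniform $L^2$-bound is a sound (if slightly more detailed) way of justifying the applicability of that lemma.
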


\begin{proof}
From Lemma \ref{210830_lem1}, we know that $(B_m)_m$  converges in $L^2(\Omega';C([0,T]))$ towards $W_\infty$ which is a Brownian motion with respect to $(\mathfrak{F}_t^\infty)_{t\in[0,T]}$. Particularly, this convergence result also holds in probability in $C([0,T])$. Moreover, from the convergence~\eqref{eq:conv} and Lemma~\ref{lem:id_vm}, we know that $(\vhnlm)_m$ converges towards $u_\infty$ in $L^2(0,T;\lzlambda),\ \mathds{P'}\text{-a.s. in }\Omega'$, thus up to a subsequence denoted in the same way, using the Lipschitz property of g, it follows that $(g(\vhnlm))_m$ converges towards $g(u_{\infty})$ in probability in $L^2(0,T;\lzlambda)$. Now, we can apply Lemma 2.1 in \cite{Debussche}  and conclude that the convergence in \eqref{210830_02} holds true in probability in $L^2(0,T;L^2(\Lambda))$ and therefore, passing to a subsequence if necessary, the assertion follows. 
\end{proof}

\subsection{Convergence towards a martingale solution}\label{S3}

For the sake of simplicity we use the notations $\Tau=\Tau_m$, $h=h_m$, $\Delta t= \Delta t_m$ and $N=N_m$.
For any $n \in \{0,\ldots,N\}$ and $K \in \Tau$, setting $\mathcal M_K^n=\mathcal M_{h,N}(t_n,x_K)$ we can define $\widehat{\mathcal{M}}_{h,N}$ using Definition~\eqref{eq:notation_whhat} and we obtain the following strong convergence result in $L^p(\Omega';L^2(0,T;\lzlambda))$.

\begin{lem}\label{convergenceaffine}
Passing to a not relabeled subsequence if necessary, we have the following convergence results for any $p\in[1,2)$:
\begin{align*}
v_{h,N}^l,\ v_{h,N}^r \ \text{and} \ \widehat{v}_{h,N}&\stackrel{m\rightarrow+\infty}{\longrightarrow} u_\infty &&\text{in }L^p(\Omega';L^2(0,T;\lzlambda)),\\
\mathcal{M}_{h,N} \ \text{and} \ \widehat{\mathcal{M}}_{h,N}&\stackrel{m\rightarrow+\infty}{\longrightarrow} \mathcal{M}_\infty &&\text{in }L^p(\Omega';L^2(0,T;\lzlambda))
\end{align*} 
and 
\begin{align*}
v_h^0\stackrel{m\rightarrow+\infty}{\longrightarrow} v_0\quad \text{in }L^p(\Omega';\lzlambda).
\end{align*}
Moreover, $u_{\infty}\in L^2(\Omega';L^2(0,T;H^1(\Lambda)))$ and $\mathcal{M}_{\infty}\in L^2(\Omega';C([0,T];L^2(\Lambda)))$.
\end{lem}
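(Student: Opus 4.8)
\textit{Proof strategy.} The plan is to first establish, on the new probability space, strong convergence in $L^2(0,T;\lzlambda)$ that holds $\mathds{P}'$-almost surely for each of the listed reconstructions, and then to upgrade these $\mathds{P}'$-a.s. limits to convergence in $L^p(\Omega')$ for $p\in[1,2)$ by a uniform integrability argument based on uniform $L^2(\Omega')$-bounds. The two regularity assertions are handled separately, using the discrete-gradient machinery for $u_\infty$ and the properties of the It\^o integral for $\mathcal{M}_\infty$.

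For the $\mathds{P}'$-a.s. convergences in $L^2(0,T;\lzlambda)$, Lemma \ref{lem:id_vm} gives $\vhnlm=v_m$, so $\vhnlm\to u_\infty$ is exactly \eqref{eq:conv}; by Lemma \ref{220111_lem01} the jump function coincides with $\vhnrm-\vhnlm=z_m$, whence $\vhnrm=v_m+z_m\to u_\infty$ since $z_m\to0$ $\mathds{P}'$-a.s. For the piecewise affine reconstruction, the definition \eqref{eq:notation_whhat} gives on each $[t_n,t_{n+1})$ the pointwise bound $|\widehat{v}_{h,N}-\vhnl|\le|v_h^{n+1}-v_h^n|$, so that $\|\widehat{v}_{h,N}-\vhnl\|_{L^2(0,T;\lzlambda)}^2\le\|z_m\|_{L^2(0,T;\lzlambda)}^2\to0$ and hence $\widehat{v}_{h,N}\to u_\infty$. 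The convergence $\mathcal{M}_{h,N}\to\mathcal{M}_\infty$ is Lemma \ref{martingaleIdentification}, and $v_h^0=v_m^0\to v_0$ is again \eqref{eq:conv}. The only term requiring a genuinely stochastic estimate is $\widehat{\mathcal{M}}_{h,N}$: writing $\widehat{\mathcal{M}}_{h,N}-\mathcal{M}_{h,N}$ on $[t_n,t_{n+1})$ through the martingale increments $\int_{t_n}^{t_{n+1}}g(\vhnl)\,dB_m$ and $\int_{t_n}^{t}g(\vhnl)\,dB_m$, the It\^o isometry (legitimate because $\vhnl$ is adapted and $B_m$ is a Brownian motion by Lemma \ref{lemWNBM}) together with \eqref{H3} gives $\erws{\|\widehat{\mathcal{M}}_{h,N}-\mathcal{M}_{h,N}\|_{L^2(0,T;\lzlambda)}^2}\le C\Delta t\to0$, which combined with Lemma \ref{martingaleIdentification} yields $\widehat{\mathcal{M}}_{h,N}\to\mathcal{M}_\infty$.

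To pass to $L^p(\Omega')$ I would invoke Vitali's theorem: $\mathds{P}'$-a.s. convergence implies convergence in probability, and it remains to check uniform integrability of the $p$-th powers of the $L^2(0,T;\lzlambda)$-norms. Each sequence is bounded in $L^2(\Omega';L^2(0,T;\lzlambda))$ uniformly in $m$: for $\vhnl,\vhnr$ and $v_h^0$ this follows from equality in law with $\uhnlm,\uhnrm,u_{h_m}^0$ combined with Lemmas \ref{210611_lem01} and \ref{bound_u0}, while for $\mathcal{M}_{h,N}$ and $\widehat{\mathcal{M}}_{h,N}$ it follows from the It\^o isometry, \eqref{H3} and the $L^2$-bound on $\vhnl$. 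Since $2/p>1$ whenever $p<2$, boundedness in $L^2(\Omega')$ forces $\{\|\cdot\|_{L^2(0,T;\lzlambda)}^p\}_m$ to be uniformly integrable, so Vitali's theorem upgrades every $\mathds{P}'$-a.s. limit to an $L^p(\Omega')$-limit; this is precisely why the endpoint $p=2$ is out of reach and the statement is restricted to $p\in[1,2)$.

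For the regularity of the limits, the discrete gradients $\nabla^{h}\vhnl$ are bounded in $L^2(\Omega';L^2(0,T;\lzlambda))$ by equality in law, Lemma \ref{remarkuhnrboundintomega} and Remark \ref{remarkforuhnrboundiii}, so up to a subsequence they converge weakly, and repeating the identification argument of Lemma \ref{addreg u} (following \cite{eymardgallouet,gal}) shows the weak limit is $\nabla u_\infty$, giving $u_\infty\in L^2(\Omega';L^2(0,T;H^1(\Lambda)))$. For $\mathcal{M}_\infty$, Lemma \ref{measurability} furnishes a predictable representative of $u_\infty$, so $g(u_\infty)$ is a predictable, square integrable integrand and $\mathcal{M}_\infty$ is a bona fide It\^o integral against the Brownian motion $W_\infty$ (Lemma \ref{210830_lem1}); the Burkholder--Davis--Gundy inequality with \eqref{H3} and $u_\infty\in L^2(\Omega';L^2(0,T;\lzlambda))$ then yields $\erws{\sup_{t\in[0,T]}\|\mathcal{M}_\infty(t)\|_{\lzlambda}^2}<\infty$ together with path continuity, i.e. $\mathcal{M}_\infty\in L^2(\Omega';C([0,T];\lzlambda))$. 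I expect the main obstacle to be the uniform integrability step, since one must secure the uniform $L^2(\Omega')$-bounds for the stochastic reconstructions $\mathcal{M}_{h,N}$ and $\widehat{\mathcal{M}}_{h,N}$ through the It\^o isometry (which relies on the adaptedness and Brownian-motion facts proved earlier) and must accept the loss of the endpoint $p=2$ inherent in this Vitali argument.
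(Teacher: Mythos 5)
Your proof is correct and follows essentially the same route as the paper: $\mathds{P}'$-a.s. convergence from the Skorokhod construction, uniform $L^2(\Omega')$ bounds transferred by equality in law, Vitali's theorem for $p<2$, the It\^o isometry for $\widehat{\mathcal{M}}_{h,N}-\mathcal{M}_{h,N}$, weak convergence of discrete gradients for the $H^1$ regularity, and Burkholder--Davis--Gundy plus Fatou for $\mathcal{M}_\infty$. The only cosmetic difference is that you treat $v_{h,N}^r$ and $\widehat{v}_{h,N}$ via a.s. convergence of $z_m$ followed by Vitali, whereas the paper estimates $\erws{\|v_{h,N}^r-v_{h,N}^l\|^2}$ and $\erws{\|\widehat{v}_{h,N}-v_{h,N}^l\|^2}$ directly in $L^2(\Omega')$; both are valid.
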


\begin{proof}
We recall that thanks to convergence~\eqref{eq:conv}, $(v_{h,N}^l)_m$ converges to $u_{\infty}$ for $m\rightarrow\infty$ in $L^2(0,T;L^2(\Lambda))$ $\mathds{P}'$-a.s. in $\Omega'$. Since $\mathds{P}'\circ (v_{h,N}^l)^{-1}=\mathds{P}\circ(u_{h,N}^l)^{-1}$, from  Lemma \ref{210611_lem01} it follows that there exists a constant $C\geq 0$ such that
\begin{align}\label{211008_02}
\erws{\| v_{h,N}^l\|_{L^2(0,T;\lzlambda)}^2}\leq C
\end{align}
for all $m\in\mathbb{N}$ and from Fatou's lemma we obtain $u_{\infty}\in L^2(\Omega';L^2(0,T;L^2(\Lambda)))$. The convergence of $(v_{h,N}^l)_m$ towards $u_{\infty}$ in $L^p(\Omega';L^2(0,T;\lzlambda))$ is a consequence of \eqref{211008_02} and of the theorem of Vitali (see, e.g., \cite[Corollaire 1.3.3]{D}). Now, using \eqref{210824_05} we get
\begin{align*}
\erws{\|v_{h,N}^r-v_{h,N}^l\|_{L^2(0,T;\lzlambda)}^2}=\erww{\|\uhnr-\uhnl\|_{L^2(0,T;\lzlambda)}^2}\stackrel{m\rightarrow+\infty}{\longrightarrow}0,
\end{align*}
hence $(v_{h,N}^r-v_{h,N}^l)\rightarrow 0$ in $L^2(\Omega';L^2(0,T;\lzlambda))$ as $m\rightarrow+\infty$ and, thanks to the continuous embedding of $L^2(\Omega';L^2(0,T;\lzlambda))$ into  $L^p(\Omega';L^2(0,T;\lzlambda))$, also in $L^p(\Omega';\linebreak L^2(0,T;\lzlambda))$ for all $1\leq p<2$. Therefore, $v_{h,N}^r\rightarrow u_{\infty}$ as $m\rightarrow+\infty$ in $L^p(\Omega';\linebreak L^2(0,T;\lzlambda))$ for all $1\leq p<2$. 
Now we apply a similar argumentation to $(\widehat{v}_{h,N})_m$. We have
\begin{align}
\label{211008_03}
\begin{aligned}
\erws{\|\vhnl-\widehat{v}_{h,N}\|_{L^2(0,T;\lzlambda)}^2}
&=\erws{\sum_{n=0}^{N-1}\|v_h^{n+1}-v_h^n\|_\lzlambda^2\int_{t_n}^{t_{n+1}}\left(\frac{t-t_n}{\Delta t}\right)^2\, dt}\\
&=\frac{\Delta t}{3}\erws{\sum_{n=0}^{N-1}\|v_h^{n+1}-v_h^n\|_\lzlambda^2}.
\end{aligned}
\end{align}
Repeating the arguments of Proposition~\ref{bounds} on \eqref{newsemiimplicitequ} it follows that there exists a constant $C_1'\geq 0$ such that
\begin{align}\label{210830_07}
\erws{\int_0^T|v_{h,N}^r|_{1,h}^2\,dt}+\erws{\sum_{n=0}^{N-1}\|v_h^{n+1}-v_h^n\|_\lzlambda^2}\leq C_1'.
\end{align}
Combining \eqref{211008_03} with \eqref{210830_07} it follows that $(v_{h,N}^l-\widehat{v}_{h,N})\rightarrow 0$ in $L^2(\Omega';L^2(0,T;\lzlambda))$ and we may conclude that $\widehat{v}_{h,N}\rightarrow u_{\infty}$ in $L^p(\Omega';L^2(0,T;\lzlambda))$ for all $1\leq p<2$.
Using \eqref{210830_07} and the same arguments as in the proof of Lemma \ref{addreg u} on the discrete gradient $\nabla^hv_{h,N}^r$ of $v_{h,N}^r$ it follows that, passing to a not relabeled subsequence if necessary, $(\nabla^h v_{h,N}^r)_m$ converges weakly in $L^2(\Omega';L^2(0,T;L^2(\Lambda)^2))$ towards $\nabla u_{\infty}$, hence $u_{\infty}\in\linebreak L^2(\Omega';L^2(0,T;H^1(\Lambda)))$.\\
We recall that, according to Lemma \ref{martingaleIdentification}, $\mathcal{M}_{h,N}\rightarrow \mathcal{M}_{\infty}$ for $m\rightarrow+\infty$ in $L^2(0,T;\lzlambda)$ $\mathds{P}'$-a.s. in $\Omega'$. Using $H_2$, \eqref{H3}, the Burkholder-Davis-Gundy inequality with constant $C_B\geq 0$ and Lemma \ref{uhnsupbound} it follows that 
\begin{align}\label{210830_06}
\begin{split}
\erws{\sup_{t\in [0,T]}\|\mathcal{M}_{h,N}(t)\|_{\lzlambda}^2}
&\leq C_B \erws{\int_0^T\|g(\vhnl(t))\|_\lzlambda^2\,dt}\\
&\leq C_B C_L\left(|\Lambda| T +\erws{\int_0^T\|\vhnl(t)\|_\lzlambda^2\,dt}\right)\\
&=C_B C_L\left(|\Lambda| T +\erww{\int_0^T\|\uhnl(t)\|_\lzlambda^2\,dt}\right)\\
&= C_BC_LT(|\Lambda|+\cter{K4}).
\end{split}
\end{align}
Now, the convergence of $(\mathcal{M}_{h,N})_m$ towards $\mathcal{M}_{\infty}$ in $L^p(\Omega';L^2(0,T;\lzlambda))$ for all $1\leq p<2$ follows from \eqref{210830_06} and the theorem of Vitali (see, e.g., \cite[Corollaire 1.3.3]{D}).
Using the It\^o isometry, $H_2$, \eqref{H3} and Lemma \ref{uhnsupbound} it follows that there exists $C_3\geq 0$ such that
\begin{align*}
&\erws{\int_0^T\|\mathcal{M}_{h,N}(t)-\widehat{\mathcal{M}}_{h,N}(t)\|_\lzlambda^2\,dt}\\
&=\erws{\sum_{n=0}^{N-1}\int_{t_n}^{t_{n+1}}\left\|\int_{t_n}^tg(\vhnl(s))\,dB_{m}(s)-\frac{t-t_n}{\Delta t}\int_{t_n}^{t_{n+1}}g(\vhnl(s))\,dB_{m}(s)\right\|_\lzlambda^2\,dt}\\
&\leq 2\erws{\sum_{n=0}^{N-1}\int_{t_n}^{t_{n+1}}\left(\int_{t_n}^{t}\| g(\vhnl(s))\|_\lzlambda^2\,ds+\left(\frac{t-t_n}{\Delta t}\right)^2\int_{t_n}^{t_{n+1}}\|g(\vhnl(s))\|_\lzlambda^2\,ds\right)\,dt}\\
&\leq 2\sum_{n=0}^{N-1}\int_{t_n}^{t_{n+1}}C_L(t-t_n)\left(|\Lambda|+\erws{\sup_{s\in [0,T]}\| \vhnl(s)\|_\lzlambda^2}\right)\left(1+\frac{(t-t_n)}{\Delta t}\right)\,dt\\
&\leq 2\sum_{n=0}^{N-1}\int_{t_n}^{t_{n+1}}C_L(t-t_n)\left(|\Lambda|+\erww{\sup_{s\in [0,T]}\| \uhnl(s)\|_\lzlambda^2}\right)\left(1+\frac{(t-t_n)}{\Delta t}\right)\,dt\\
&\leq \frac53C_LT(|\Lambda|+K_3)\Delta t\stackrel{m\rightarrow+\infty}{\longrightarrow 0}.
\end{align*}
Hence, $\mathcal{M}_{h,N}-\widehat{\mathcal{M}}_{h,N}\rightarrow 0$ for $m\rightarrow +\infty$ in $L^2(\Omega';L^2(0,T;\lzlambda))$ and therefore $\widehat{\mathcal{M}}_{h,N}\rightarrow\mathcal{M}_{\infty}$ for  $m\rightarrow +\infty$ in $L^p(\Omega';L^2(0,T;\lzlambda))$ for all $1\leq p<2$.
Recalling that $\mathcal{M}_{\infty}$ is a stochastic It\^{o} integral with respect to the Brownian motion $(W_{\infty}(t))_{t\geq 0}$, we may conclude that $\mathcal{M}_{\infty}$ has $\mathds{P}'$-a.s. continuous paths in $\lzlambda$.
From \eqref{210830_06} and Fatou's lemma it now follows that  $\mathcal{M}_{\infty}\in L^2(\Omega';C([0,T];\lzlambda))$. Since $\mathds{P}'\circ(v_h^0)^{-1}=\mathds{P}\circ(u_h^0)^{-1}$, from Lemma~\ref{bound_u0} it follows that
\begin{align*}
\erws{\| v_h^0\|^2_\lzlambda}=\erww{\| u_h^0\|^2_\lzlambda}\leq \erww{\| u_0\|^2_\lzlambda}.
\end{align*}
Together with the $\mathds{P}'$-a.s. convergence of $v_h^0\stackrel{m\rightarrow+\infty}{\longrightarrow v_0}$ in $\lzlambda$ from the convergence result~\eqref{eq:conv}, the last assertion follows again from the theorem of Vitali (see, e.g., \cite[Corollaire 1.3.3]{D}).
\end{proof}

We now have all the necessary tools to pass to the limit in the scheme.

\begin{prop}\label{210823_p1}
There exists a subsequence of $(\widehat{v}_{h,N})_m$, still denoted by $({\widehat{v}_{h,N}})_m$, converging in $L^p(\Omega';L^2(0,T;\lzlambda))$ (for any $p\in[1,2)$) as $m\rightarrow+\infty$ towards a $(\mathfrak{F}_t^\infty)_{t\in[0,T]}$-adapted stochastic process $u_{\infty}$ with values in $L^2(\Lambda)$ and having $\mathds{P}'$-a.s. continuous paths. Moreover, $u_{\infty}\in L^2(\Omega';L^2(0,T;H^1(\Lambda)))$ and satisfies for all $t\in [0,T]$,
\[u_{\infty}(t)-v_0-\int_0^t\Delta u_{\infty}\,ds=\int_0^t g(u_{\infty})\,dW_{\infty}
\quad \text{in $L^2(\Lambda)$ and $\mathds{P}'$-a.s. in $\Omega'$.}\]
\end{prop}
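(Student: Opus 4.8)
The plan is to pass to the limit $m\to\infty$ in a time-continuous weak reformulation of the scheme \eqref{newsemiimplicitequ}, written for the affine-in-time reconstruction $\widehat{v}_{h,N}$, using the convergences collected in Lemma~\ref{convergenceaffine}. First I would multiply \eqref{newsemiimplicitequ} by an arbitrary test vector $w_h\equiv(w_K)_{K\in\Tau_m}\in\re^{d_{h_m}}$, sum over $K\in\Tau_m$, and rearrange the diffusive term by discrete integration by parts (Remark~\ref{discrpartint}) together with the polarization of Remark~\ref{remarkforuhnrboundiii}. Telescoping in time and recognizing that $\sum_{k}g(v_h^k)\Delta_{k+1}B_m$ assembles into the affine reconstruction $\widehat{\mathcal{M}}_{h,N}$ of the discrete It\^o integral, this yields the exact identity, valid for every $t\in[0,T]$ and $\mathds{P}'$-a.s. in $\Omega'$,
\[(\widehat{v}_{h,N}(t),w_h)_{\lzlambda}=(v_h^0,w_h)_{\lzlambda}-\halbe\int_0^t\int_\Lambda\nabla^h\vhnr(s)\cdot\nabla^h w_h\,dx\,ds+(\widehat{\mathcal{M}}_{h,N}(t),w_h)_{\lzlambda}.\]
Being a combination of affine reconstructions and a time integral, both sides are continuous in $t$.

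Next I would fix $\psi\in C^\infty(\overline{\Lambda})$, take $w_h=(\psi(x_K))_{K\in\Tau_m}$, a weight $\varphi\in C([0,T])$ and a bounded random variable $\xi$ on $\Omega'$, multiply the identity by $\xi\varphi(t)$, integrate over $(0,T)$ and apply $\mathbb{E}'$. The terms containing $\widehat{v}_{h,N}$ and $v_h^0$ pass to the limit using the strong convergences $\widehat{v}_{h,N}\to u_\infty$ in $L^p(\Omega';L^2(0,T;\lzlambda))$ and $v_h^0\to v_0$ in $L^p(\Omega';\lzlambda)$ from Lemma~\ref{convergenceaffine}, together with $w_h\to\psi$ in $\lzlambda$. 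The stochastic term converges thanks to $\widehat{\mathcal{M}}_{h,N}\to\mathcal{M}_\infty$ in $L^p(\Omega';L^2(0,T;\lzlambda))$; here the decisive point, already secured by Lemmas~\ref{210830_lem1}, \ref{measurability} and \ref{martingaleIdentification}, is that the limit $\mathcal{M}_\infty(t)=\int_0^t g(u_\infty)\,dW_\infty$ is a genuine It\^o integral with respect to the $(\mathfrak{F}_t^\infty)_{t\in[0,T]}$-Brownian motion $W_\infty$, so that no spurious correction term appears.

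The main obstacle is the diffusion term. I would treat it through the weak convergence $\nabla^h\vhnr\rightharpoonup\nabla u_\infty$ in $L^2(\Omega';L^2(0,T;\lzlambda)^2)$ (established in Lemma~\ref{convergenceaffine}) combined with the consistency of the interpolated discrete gradient: on each diamond $D_\sigma$ one has $\nabla_\sigma^h w_h=2\tfrac{\psi(x_L)-\psi(x_K)}{\dkl}\mathbf{n}_{KL}\to 2(\nabla\psi\cdot\mathbf{n}_{KL})\mathbf{n}_{KL}$ as $h_m\to0$. Since $\nabla^h\vhnr$ is itself directed along $\mathbf{n}_{KL}$ on $D_\sigma$, the factor $\halbe$ exactly compensates the dimensional factor $2$, and the bilinear term converges to $\int_0^t(\nabla u_\infty(s),\nabla\psi)_{\lzlambda}\,ds$; this is the stochastic counterpart of the classical finite-volume identification of the diffusion operator and relies on the mesh-regularity bound \eqref{mrp}, see \cite[Theorem~14.3]{gal}. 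Care is needed at the boundary edges, where the Neumann condition enters through the convention $\nabla_\sigma^h=0$ for $\sigma\in\edgesext$, so that no boundary contribution survives in the limit.

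Finally I would upgrade the resulting averaged identity to the pointwise equation. Since $\xi$ is an arbitrary bounded random variable and $\varphi$ ranges over a countable dense family, and since each term is continuous in $t$, one obtains $\mathds{P}'$-a.s. and for all $t\in[0,T]$ that $(u_\infty(t),\psi)_{\lzlambda}=(v_0,\psi)_{\lzlambda}-\int_0^t(\nabla u_\infty,\nabla\psi)_{\lzlambda}\,ds+(\mathcal{M}_\infty(t),\psi)_{\lzlambda}$, first for $\psi$ in a countable dense subset of $H^1(\Lambda)$ and then, by linearity and density, for all such $\psi$. Because $u_\infty\in L^2(\Omega';L^2(0,T;H^1(\Lambda)))$ by Lemma~\ref{convergenceaffine}, this weak identity shows that $\int_0^t\Delta u_\infty\,ds$ belongs to $\lzlambda$ and equals $u_\infty(t)-v_0-\mathcal{M}_\infty(t)$, which is precisely the asserted equation in $\lzlambda$. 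The representation $u_\infty(t)=v_0+\int_0^t\Delta u_\infty\,ds+\mathcal{M}_\infty(t)$ then provides the $\mathds{P}'$-a.s. continuous $\lzlambda$-valued paths, while the predictable representative of $u_\infty$ from Lemma~\ref{measurability} yields $(\mathfrak{F}_t^\infty)_{t\in[0,T]}$-adaptedness, completing the proof of Proposition~\ref{210823_p1}.
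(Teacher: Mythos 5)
Your overall architecture coincides with the paper's: test the scheme \eqref{newsemiimplicitequ} against products of a time weight, a spatial test function and a bounded random variable, pass to the limit term by term using the strong $L^p(\Omega';L^2(0,T;\lzlambda))$ convergences of Lemma~\ref{convergenceaffine}, identify the limit of the noise term as the genuine It\^{o} integral $\int_0^\cdot g(u_\infty)\,dW_\infty$ via Lemmas~\ref{210830_lem1}, \ref{measurability} and \ref{martingaleIdentification}, and finally upgrade the averaged identity to a pointwise-in-time equation in $L^2(\Lambda)$ with continuous paths. Those parts are fine, and your exact telescoped identity for $\widehat{v}_{h,N}$ is correct.

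The gap is in the diffusion term, which is precisely the delicate step. You rewrite it as $\halbe\int_\Lambda\nabla^h\vhnr\cdot\nabla^h w_h\,dx$ and propose to pass to the limit by pairing the \emph{weak} convergence $\nabla^h\vhnr\rightharpoonup\nabla u_\infty$ with the "convergence" of $\nabla_\sigma^h w_h=2\frac{\psi(x_L)-\psi(x_K)}{\dkl}\mathbf{n}_{KL}$. But $\nabla^h w_h$ does not converge strongly to $\nabla\psi$ in $L^2(\Lambda)^2$: on each diamond $D_\sigma$ it is, up to $\mathcal O(h)$, equal to $2(\mathbf{n}_{KL}\otimes\mathbf{n}_{KL})\nabla\psi$, so that
\[
\nabla_\sigma^h w_h-\nabla\psi\approx(2\,\mathbf{n}_{KL}\otimes\mathbf{n}_{KL}-I)\nabla\psi,
\]
and $2\,\mathbf{n}_{KL}\otimes\mathbf{n}_{KL}-I$ is an orthogonal reflection; hence $\|\nabla^h w_h-\nabla\psi\|_{L^2(\Lambda)^2}\approx\|\nabla\psi\|_{L^2(\Lambda)^2}$ does not tend to zero, and the convergence of $\nabla^h w_h$ to $\nabla\psi$ is at best weak. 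Your pairing is therefore a product of two merely weakly convergent sequences, which cannot be passed to the limit; the advertised cancellation of $\halbe$ against $2$ only reduces the integrand to $(\nabla_\sigma^h\vhnr\cdot\mathbf{n}_{KL})(\nabla\psi\cdot\mathbf{n}_{KL})$, for which there is no reason to converge to $\nabla u_\infty\cdot\nabla\psi$. The paper circumvents this by never forming the discrete gradient of the test function: it sums the fluxes back onto each control volume, applies the Gauss--Green formula to recover $\int_K\Delta\varphi\,dx$, and controls the consistency residual
\[
R_\sigma^\varphi=\frac1{m_\sigma}\int_\sigma\nabla\varphi\cdot\mathbf{n}_{KL}\,d\sigma(x)-\frac{\varphi(x_L)-\varphi(x_K)}{\dkl}=\mathcal O(h),
\]
which follows from the orthogonality $x_L-x_K=\dkl\,\mathbf{n}_{KL}$; the residual contribution is then killed by the Cauchy--Schwarz inequality and the bound \eqref{210830_07} on $|\vhnr|_{1,h}$, while the main term $-\int_\Lambda\vhnr\,\Delta\varphi\,dx$ passes to the limit using only the strong $L^2$ convergence of $\vhnr$. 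This flux-consistency argument is also the actual content of \cite[Theorem~14.3]{gal}, which you cite; your proof needs it substituted for the weak--weak pairing.
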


\begin{proof}
Let $A\in\mathcal{A}'$, $\xi\in \mathcal{D}(\re)$ with $\xi(T)=0$ and $\varphi\in \mathcal{D}(\re^2)$ with $\nabla\varphi\cdot\mathbf{n}=0$ on $\partial\Lambda$, where we denote $\mathcal{D}(D):=C_c^\infty(D)$ for any open subset $D\subseteq \re^j,j\in\na$. Moreover we define the piecewise constant function $\varphi_h(x):=\varphi(x_K)$ for $x\in K$, $K\in\Tau$.\\
For $K\in\Tau$, $n\in\{0,\dots,N-1\}$ and $t\in[t_n,t_{n+1})$ we multiply \eqref{newsemiimplicitequ} with $\mathds{1}_A\xi(t)\varphi(x_K)$ to obtain
\begin{equation}\label{220814}
\mathds{1}_A\xi(t)\frac{m_K}{\Delta t}[v_K^{n+1}-v_K^n-g(v_K^n)\Delta_{n+1}B_m]\varphi(x_K)+\mathds{1}_A\xi(t)\sum_{\sigma\in\edgesint\cap\edges_K}\frac{m_\sigma}{\dkl}(v_K^{n+1}-v_L^{n+1})\varphi(x_K)=0.
\end{equation}
First we sum \eqref{220814} over each control volume $K\in\Tau$, then we integrate over each time interval $[t_{n},t_{n+1}]$ for fixed $n=0,\dots,N-1$, then we sum over $n=0,\dots,N-1$ and finally we take the expectation to obtain
\begin{equation}
\label{discretequforlimit}
\begin{aligned}
0&=\mathbb{E}'\left[\sum_{n=0}^{N-1}\int_{t_n}^{t_{n+1}}\sum_{K\in\Tau}m_K\mathds{1}_A\xi(t)\frac{1}{\Delta t}[v_K^{n+1}-v_K^n-g(v_K^n)\Delta_{n+1}B_m]\varphi(x_K)\,dt\right]\\
&\quad+\mathbb{E}'\left[\sum_{n=0}^{N-1}\int_{t_n}^{t_{n+1}}\mathds{1}_A\xi(t)\sum_{K\in\Tau}\sum_{\sigma\in\edgesint\cap\edges_K}\frac{m_\sigma}{\dkl}(v_K^{n+1}-v_L^{n+1})\varphi(x_K)\,dt\right]\\
&=:T_{1,m}+T_{2,m}.
\end{aligned}
\end{equation}
In the following, we will pass to the limit with $m\rightarrow+\infty$ on the right-hand side of \eqref{discretequforlimit}.
Using partial integration we obtain
\begin{align*}
T_{1,m}
&=\mathbb{E}'\left[\mathds{1}_A\int_0^T\int_\Lambda\partial_t[\widehat{v}_{h,N}-\widehat{\mathcal{M}}_{h,N}](t,x)\xi(t)\varphi_h(x)\,dx\,dt\right]\\
&=-\mathbb{E}'\left[\mathds{1}_A\int_0^T\int_\Lambda[\widehat{v}_{h,N}-\widehat{\mathcal{M}}_{h,N}](t,x)\xi'(t)\varphi_h(x)\,dx\,dt\right]-\mathbb{E}'\left[\mathds{1}_A\int_\Lambda v_h^0(x)\xi(0)\varphi_h(x)\,dx\right].
\end{align*}
Thanks to the convergence results of Lemma \ref{convergenceaffine}, passing to a not relabeled subsequence if necessary, we can pass to the limit and obtain
\begin{align*}
&-\mathbb{E}'\left[\mathds{1}_A\int_0^T\int_\Lambda[\widehat{v}_{h,N}-\widehat{\mathcal{M}}_{h,N}](t,x)\xi'(t)\varphi_h(x)\,dx\,dt\,\right]-\mathbb{E}'\left[\mathds{1}_A\int_\Lambda v_h^0(x)\xi(0)\varphi_h(x)\,dx\right]\\
&\stackrel{m\rightarrow+\infty}{\longrightarrow} -\mathbb{E}'\left[\mathds{1}_A\int_0^T\int_\Lambda[u_\infty-\mathcal{M}_\infty](t,x)\xi'(t)\varphi(x)\,dx\,dt\right]-\mathbb{E}'\left[\mathds{1}_A\int_\Lambda v_0(x)\xi(0)\varphi(x)\,dx\right].
\end{align*}
Now our aim is to show the following convergence result:
\begin{align*}
T_{2,m}\stackrel{m\rightarrow+\infty}{\longrightarrow} -\mathbb{E}'\left[\mathds{1}_A\int_0^T\int_\Lambda\xi(t)\Delta\varphi(x)u_\infty(t,x)\,dx\,dt\right].
\end{align*}
First, we note that by rearranging the sum in \eqref{discretequforlimit} the term $T_{2,m}$ can be written as
\[
 T_{2,m}
 =\mathbb{E}'\left[\sum_{n=0}^{N-1}\int_{t_n}^{t_{n+1}}\mathds{1}_A\xi(t)
 \sum_{K\in\Tau}v_K^{n+1}
 \sum_{\sigma\in\edgesint\cap\edges_K}m_\sigma\left(\frac{\varphi(x_K)-\varphi(x_L)}{\dkl}\right)\,dt\right].
\]
Then, since $\nabla\varphi\cdot\mathbf{n}=0$ on $\partial\Lambda$, thanks to the Stokes formula one has,
\[
 \int_K\Delta\varphi(x)\,dx
 = \int_{\partial K}\nabla\varphi(x)\cdot\mathbf{n}\,d\sigma(x)
 = \sum_{\sigma\in\edgesint\cap\edges_K} \int_\sigma \nabla\varphi(x)\cdot\mathbf{n}_{KL}\,d\sigma(x).
\]
Thus, we have
\begin{align*}
 &T_{2,m}=\\
 &-\mathbb{E}'\left[\sum_{n=0}^{N-1}\int_{t_n}^{t_{n+1}}\mathds{1}_A\xi(t)
 \sum_{K\in\Tau} v_K^{n+1}
 \left(\int_K\Delta\varphi(x)\,dx - \sum_{\sigma\in\edgesint\cap\edges_K} \int_\sigma \nabla\varphi(x)\cdot\mathbf{n}_{KL}\,d\sigma(x) \right)\,dt\right] \\
&+ \mathbb{E}'\left[\sum_{n=0}^{N-1}\int_{t_n}^{t_{n+1}}\mathds{1}_A\xi(t)
 \sum_{K\in\Tau} v_K^{n+1}\sum_{\sigma\in\edgesint\cap\edges_K}m_\sigma\left(\frac{\varphi(x_K)-\varphi(x_L)}{\dkl}\right)\,dt\right]\\
 &= -\mathbb{E}'\left[\int_0^T\mathds{1}_A\xi(t)\int_\Lambda \vhnr(t,x)\Delta\varphi(x)\,dx\,dt\right]\\
&+ \mathbb{E}'\left[\sum_{n=0}^{N-1}\int_{t_n}^{t_{n+1}}\mathds{1}_A\xi(t)
\sum_{\sigma\in\edgesint} m_\sigma (v_K^{n+1}-v_L^{n+1}) R_\sigma^\varphi
 \,dt\right],
\end{align*}
with
\[
 R_\sigma^\varphi
 = \frac1{m_\sigma} \int_\sigma \nabla\varphi(x)\cdot\mathbf{n}_{KL}\,d\sigma(x)
 - \frac{\varphi(x_L)-\varphi(x_K)}{\dkl}.
\]
Using Lemma \ref{convergenceaffine} and passing to a not relabeled subsequence if necessary, one gets
\begin{align*}
-\mathbb{E}'\left[\int_0^T\int_\Lambda\mathds{1}_A\xi(t)v_{h,N}^r(t,x)\Delta\varphi(x)\,dx\,dt\right]
\stackrel{m\rightarrow+\infty}{\longrightarrow}-\mathbb{E}'\left[\int_0^T\int_\Lambda\mathds{1}_A\xi(t)u_\infty(t,x)\Delta\varphi(x)\,dx\,dt\right].
\end{align*}
Concerning the second term in $T_{2,m}$, for any $\sigma=K|L\in\edgesint$, the orthogonality condition implies $x_L-x_K=\dkl\mathbf{n}_{KL}$, thus thanks to the Taylor formula for any $x\in\sigma$ one has,
\[
 \nabla\varphi(x)\cdot\mathbf{n}_{KL}=\frac{\varphi(x_L)-\varphi(x_K)}{\dkl}+\mathcal O(h),
\]
that gives,
\[
 R_\sigma^\varphi \leq C_\varphi h.
\]
Therefore, thanks to the Cauchy-Schwarz inequality and inequality~\eqref{210830_07} the second term in $T_{2,m}$ satisfies,
\begin{align*}
 &\left|\mathbb{E}'\left[\sum_{n=0}^{N-1}\int_{t_n}^{t_{n+1}}\mathds{1}_A\xi(t)
\sum_{\sigma\in\edgesint} m_\sigma (v_K^{n+1}-v_L^{n+1}) R_\sigma^\varphi
 \,dt\right]\right|\\
 &\leq C_\varphi h\mathbb{E}'\left[\sum_{n=0}^{N-1}\int_{t_n}^{t_{n+1}}\mathds{1}_A |\xi(t)|
 \left(\sum_{\sigma\in\edgesint}m_\sigma\dkl\right)^\frac12 \left(\sum_{\sigma\in\edgesint}m_{\sigma}\frac{|v_K^{n+1}-v_L^{n+1}|^2}{\dkl}\right)^\frac12\right]\\
 &\leq \sqrt2 C_\varphi |\Lambda|^\frac12 h\mathbb{E}'\left[\int_0^T\mathds{1}_A |\xi(t)||\vhnr(t)|_{1,h}\right]\\
 &\leq \sqrt2 C_\varphi |\Lambda|^\frac12 h\|\xi\mathds{1}_A\|_{L^2(\Omega'\times(0,T))}\left(\erws{\int_0^T|v_{h,N}^r(t)|_{1,h}^2\,dt}\right)^\halbe
\stackrel{m\rightarrow+\infty}{\longrightarrow} 0. 
\end{align*}
Thus, we have shown that 
\begin{align}\label{210830_08}
\begin{split}
&-\int_0^T\int_\Lambda\left(u_\infty(t,x)-\int_0^{t}g(u_\infty(s,x))\,dW_\infty(s)\right)\xi'(t)\varphi(x)\,dx\,dt-\int_\Lambda v_0(x)\xi(0)\varphi(x)\,dx\\
&=\int_0^T\int_\Lambda  u_\infty(t,x)\Delta\varphi(x)\xi(t)\,dx\,dt =-\int_0^T \int_\Lambda \nabla u_\infty(t,x) \cdot \nabla \varphi(x)\xi(t)\,dx\,dt
\end{split}
\end{align}
$\mathds{P}'$-a.s. in $\Omega'$ for all $\xi\in \mathcal{D}(\re)$ with $\xi(T)=0$ and all $\varphi\in \mathcal{D}(\re^2)$ such that $\nabla\varphi\cdot\mathbf{n}=0$ on $\partial\Lambda$. By \cite[Theorem 1.1]{Droniou} the set $\{\varphi\in\mathcal{D}(\mathbb{R}^2)\ | \ \nabla\varphi\cdot\mathbf{n}=0 \ \text{on} \ \partial\Lambda\}$ is dense in $H^1(\Lambda)$ and therefore \eqref{210830_08} applies to all $\varphi\in H^1(\Lambda)$. 

In the following, we denote the dual space of $H^1(\Lambda)$ by $H^1(\Lambda)^{\ast}$, recall that
\[H^1(\Lambda)\hookrightarrow L^2(\Lambda)\hookrightarrow H^1(\Lambda)^{\ast}\]
with continuous and dense embeddings and we will denote the $H^1(\Lambda)$-$H^1(\Lambda)^{\ast}$ duality bracket by $\langle\cdot,\cdot\rangle$ and the $L^2(\Lambda)$ scalar product by $(\cdot,\cdot)$. With the additional information 
\[u_{\infty}\in L^2(\Omega';L^2(0,T;H^1(\Lambda)))\]
from Lemma \ref{convergenceaffine}, it follows that 
\[\Delta u_{\infty}\in L^2(\Omega';L^2(0,T;H^1(\Lambda)^{\ast}))\]
and
\begin{align}\label{220125_01}
\begin{split}
-\int_0^T\int_{\Lambda}\nabla u_{\infty}(t,x)\cdot\nabla\varphi(x)\xi(t)\,dx\,dt=\int_0^T\langle \Delta u_{\infty}(t,\cdot),\varphi\rangle\xi(t)\,dt
\end{split}
\end{align}
$\mathds{P}'$-a.s. in $\Omega'$, for all $\xi\in \mathcal{D}(\mathbb{R})$ such that $\xi(T)=0$ and all $\varphi\in H^1(\Lambda)$. Combining \eqref{210830_08} with \eqref{220125_01} and with the identity
\begin{align}\label{eqliminitialdata}
-\int_{\Lambda}v_0(x)\varphi(x)\xi(0)\,dx=\int_0^T\int_{\Lambda}v_0(x)\varphi(x)\xi'(t)\,dx\,dt
\end{align}
(see, \cite[Lemma 7.3]{Roubicek}), from Fubini's theorem it follows that
\begin{align*}
\left\langle-\int_0^T\left(u_{\infty}(t)-\int_0^t g(u_{\infty})\,dW_{\infty}-v_0\right)\xi'(t)\,dt,\varphi\right\rangle=\left\langle\int_0^T\Delta u_{\infty}(t)\xi(t)\,dt,\varphi\right\rangle
\end{align*}
$\mathds{P}'$-a.s. in $\Omega'$ for all $\xi\in \mathcal{D}(\mathbb{R})$ such that $\xi(T)=0$ and all $\varphi\in H^1(\Lambda)$. Therefore
\begin{align*}
-\int_0^T\left(u_{\infty}(t)-\int_0^t g(u_{\infty})\,dW_{\infty}-v_0\right)\xi'(t)\,dt=\int_0^T\Delta u_{\infty}(t)\xi(t)\,dt
\end{align*}
in $H^1(\Lambda)^{\ast}$, for all $\xi\in \mathcal{D}(\mathbb{R})$ such that 
$\xi(T)=0$, $\mathds{P}'$-a.s. in $\Omega'$ since, by a separablity argument, the exceptional set in $\Omega'$ may be chosen independently of $\varphi$. Consequently, (see, e.g. \cite[Proposition A6]{Brezis})
\[u_{\infty}-\int_0^\cdot g(u_{\infty})\,dW_{\infty}-v_0\in W^{1,2}(0,T;H^1(\Lambda)^{\ast})\quad \text{$\mathds{P}'$-a.s. in $\Omega'$}\]
and
\begin{align}\label{220126_02}
\frac{d}{dt}\left(u_{\infty}(t)-\int_0^t g(u_{\infty})\,dW_{\infty}-v_0\right)=\Delta u_{\infty}\quad \text{in} \  L^2(\Omega';L^2(0,T;H^1(\Lambda)^{\ast}).
\end{align}
Since $g$ is Lipschitz continuous, from the chain rule for Sobolev functions it follows that $g(u_{\infty})\in L^2(\Omega';L^2(0,T;H^1(\Lambda)))$ and
\[\nabla\left(\int_0^t g(u_{\infty})\,dW_{\infty}\right)=\int_0^tg'(u_{\infty})\nabla u_{\infty}\,dW_{\infty},\]
hence $u_{\infty}-\int_0^{\cdot}g(u_{\infty})\,dW_{\infty}\in L^2(\Omega';L^2(0,T;H^1(\Lambda)))$. From  \cite[Lemma 7.3]{Roubicek} we obtain $u_{\infty}\in L^2(\Omega';C([0,T];L^2(\Lambda))$ and together with \eqref{220126_02} the following rule of partial integration for all $0\leq t\leq T$, $\mathds{P}'$-a.s. in $\Omega'$:  
\begin{align}\label{220126_01}
\begin{split}
&\left(u_{\infty}(t)-\int_0^t g(u_{\infty})\,dW_{\infty}-v_0,\zeta(t)\right)-(u_{\infty}(0)-v_0,\zeta(0))\\
&=\int_0^t\left\langle\Delta u_{\infty}(s),\zeta(s)\right\rangle\,ds+\int_0^t\left\langle \zeta'(s),u_{\infty}(s)-\int_0^s g(u_{\infty})\,dW_{\infty}-v_0\right\rangle\,ds
\end{split}
\end{align}
for all $\zeta\in L^2(0,T;H^1(\Lambda))$ with $\zeta'\in L^2(0,T;H^1(\Lambda)^{\ast}))$. Choosing $\zeta(t,x)=\xi(t)\varphi(x)$ with $\varphi\in H^1(\Lambda)$, $\xi\in\mathcal{D}(\mathbb{R})$ with $\xi(T)=0$ in \eqref{220126_01}, we get
\begin{align}\label{220126_03}
\begin{split}
&\left(u_{\infty}(t)-\int_0^t g(u_{\infty})\,dW_{\infty}-v_0,\varphi\right)\xi(t)-(u_{\infty}(0)-v_0,\varphi)\xi(0)\\
&=\int_0^t\xi(s)\left\langle\Delta u_{\infty}(s),\varphi\right\rangle\,ds+\int_0^t \xi'(s)\left(u_{\infty}(s)-\int_0^s g(u_{\infty})\,dW_{\infty}-v_0,\varphi\right)\,ds
\end{split}
\end{align}
$\mathds{P}'$-a.s. in $\Omega'$. The particular choice of $t=T$ and $\xi\in\mathcal{D}(\mathbb{R})$ with $\xi(T)=0$ and $\xi(0)=1$ in \eqref{220126_03} combined with \eqref{210830_08}, \eqref{220125_01} and \eqref{eqliminitialdata} yields
\[(u_{\infty}(0)-v_0,\varphi)=0 \quad \text{for all $\varphi\in H^1(\Lambda)$, $\mathds{P}'$-a.s. in $\Omega'$}\]
and therefore $u_{\infty}(0)=v_0$ $\mathds{P}'$-a.s. in $\Omega'$.\\
Now, we fix $t\in [0,T)$ and choose $\xi\in\mathcal{D}(\mathbb{R})$ with $\xi(T)=0$ and $\xi(s)=1$ for all $s\in [0,t]$. With this choice, from \eqref{220126_03} we obtain
\begin{align}\label{220127_01}
\left(u_{\infty}(t)-\int_0^t g(u_{\infty})\,dW_{\infty}-u_{\infty}(0),\varphi\right)=\int_0^t\left\langle\Delta u_{\infty}(s),\varphi\right\rangle\,ds
\end{align}
$\mathds{P}'$-a.s. in $\Omega'$ for all $\varphi\in H^1(\Lambda)$. Since, for fixed $\varphi\in H^1(\Lambda)$, 
\[t\mapsto \left(u_{\infty}(t)-\int_0^t g(u_{\infty})\,dW_{\infty}-u_{\infty}(0),\varphi\right) \quad \text{and} \quad t\mapsto \int_0^t\left\langle\Delta u_{\infty}(s),\varphi\right\rangle\,ds\]
are continuous in $[0,T]$, $\mathds{P}'$-a.s. in $\Omega'$, the exceptional set in $\Omega'$ in \eqref{220127_01} may be chosen independently of $t\in [0,T)$ and \eqref{220127_01} also holds for $t=T$. This yields
\begin{align*}
u_{\infty}(t)-u_{\infty}(0)-\int_0^t g(u_{\infty})\,dW_{\infty}=\int_0^t\Delta u_{\infty}(s)\,ds \quad \text{in $H^1(\Lambda)^{\ast}$ and $\mathds{P}'$-a.s. in $\Omega'$}
\end{align*}
and, since the left-hand side of the above equation is in $L^2(\Lambda)$, the equation holds also in $L^2(\Lambda)$.
\end{proof}

\begin{remark}
Applying the chain rule in \eqref{220126_01} for $t=T$ and $\zeta=\Psi\in \mathcal{D}(\mathbb{R}\times\mathbb{R}^2)$ such that $\Psi(T,\cdot)=0$ we immediately get that $u_{\infty}$ is a weak solution, i.e.,
\begin{align*}
&\int_0^T\int_{\Lambda}u_{\infty}(t,x)\partial_t\Psi(t,x)\,dx\,dt-\int_0^T\int_{\Lambda}\nabla u_{\infty}(t,x)\cdot\nabla\Psi(t,x)\, dx\,dt+\int_{\Lambda}u_0(x)\Psi(0,x)\,dx\\
&=\int_0^T\int_{\Lambda}\int_0^tg(u_{\infty}(s,x))\,dW_{\infty}(s)\partial_t\Psi(t,x)\,dx\,dt
\end{align*}
$\mathds{P}'$-a.s. in $\Omega'$. In particular convergence in distribution has been achieved.
\end{remark}

\subsection{Strong convergence of finite-volume approximations}
In the previous subsections, we have shown that our finite-volume approximations converge towards a \textit{martingale solution} of \eqref{equation}, i.e., the stochastic basis 
\[(\Omega',\mathcal{A}',\mathds{P}',(\mathfrak{F}^{\infty}_t)_{t\in[0,T]}, (W_{\infty}(t))_{t\in[0,T]})\]
is not a-priori given, but part of the solution. In this subsection, we want to show convergence of our finite-volume approximations with respect to the initially given stochastic basis 
\[(\Omega,\mathcal{A},\mathds{P}, (\mathcal{F}_t)_{t\geq 0}, (W(t))_{t\geq 0}).\]
To do so, we will proceed in several steps. First, \textit{pathwise uniqueness} of the heat equation with multiplicative Lipschitz noise is a consequence of Proposition \ref{210902_p1}: Roughly speaking, martingale solutions of \eqref{equation} on a joint stochastic basis and with respect to the same initial datum coincide. In the proof of Proposition \ref{finalProp}, we construct two convergent finite-volume approximations with respect to a joint stochastic basis, namely $(v_{\nu_k}^l)$ and $(v_{\rho_k}^l)$, from the function $(\uhnl)$ of our original finite-volume scheme using the theorems of Prokhorov and Skorokhod. Then, as a consequence of pathwise uniqueness, the limits coincide and we may apply \cite[Lemma 1.1]{Krylov} in order to obtain the convergence in probability of $(\uhnl)$. Thanks to our previous result we can improve the convergence and pass to the limit in the originally given finite-volume scheme (see Lemma \ref{220315}).
\begin{prop}\label{210902_p1} 
Let $(\Omega,\mathcal{A},\mathds{P},(\mathcal{F}_t)_{t\geq 0},(W(t))_{t\geq 0})$ be a stochastic basis and $u_1$, $u_2$ be solutions to \eqref{equation} with respect to the $\mathcal{F}_0$-measurable initial values $u_0^1$ and $u_0^2$ in $L^2(\Omega;\lzlambda)$ respectively on $(\Omega,\mathcal{A},\mathds{P},(\mathcal{F}_t)_{t\geq 0},(W(t))_{t\geq 0})$. Then, there exists a constant $C\geq 0$ such that
\begin{align*}
\erww{\| u_1(t)-u_2(t)\|^2_{\lzlambda}}\leq C\erww{\|u_0^1-u_0^2\|^2_{\lzlambda}}
\end{align*}
for all $t\in [0,T]$.
\end{prop}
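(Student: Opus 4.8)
The plan is to study the difference $w:=u_1-u_2$, to apply the It\^o formula to its squared $\lzlambda$-norm within the Gelfand triple $H^1(\Lambda)\hookrightarrow\lzlambda\hookrightarrow H^1(\Lambda)^{\ast}$, and to close the resulting inequality by Gronwall's lemma. First I would subtract the two identities from Definition~\ref{solution}: the process $w$ is then a variational solution of the linear problem with initial datum $u_0^1-u_0^2$ and stochastic term $g(u_1)-g(u_2)$, i.e. for all $t\in[0,T]$,
\begin{align*}
w(t)-(u_0^1-u_0^2)-\int_0^t\Delta w(s)\,ds=\int_0^t\left(g(u_1(s))-g(u_2(s))\right)\,dW(s)
\end{align*}
in $\lzlambda$ and $\mathds{P}$-a.s. in $\Omega$. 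Because $u_1,u_2\in L^2(\Omega;L^2(0,T;H^1(\Lambda)))$ with $\mathds{P}$-a.s.\ continuous $\lzlambda$-valued paths, we get $w\in L^2(\Omega;L^2(0,T;H^1(\Lambda)))$, $\Delta w\in L^2(\Omega;L^2(0,T;H^1(\Lambda)^{\ast}))$, and $H_2$ together with \eqref{H3} ensures $g(u_1)-g(u_2)\in L^2(\Omega;L^2(0,T;\lzlambda))$. This is exactly the setting in which the It\^o formula for the square of the $\lzlambda$-norm is available from the variational theory of SPDEs (see, e.g., \cite{KryRoz81}, \cite{LR}).

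Applying this It\^o formula gives, for all $t\in[0,T]$ and $\mathds{P}$-a.s. in $\Omega$,
\begin{align*}
\|w(t)\|_\lzlambda^2&=\|u_0^1-u_0^2\|_\lzlambda^2+2\int_0^t\langle\Delta w(s),w(s)\rangle\,ds\\
&\quad+2\int_0^t\left(g(u_1(s))-g(u_2(s)),w(s)\right)_\lzlambda\,dW(s)+\int_0^t\|g(u_1(s))-g(u_2(s))\|_\lzlambda^2\,ds.
\end{align*}
The structural point is that the diffusion contribution is dissipative: an integration by parts (with the Neumann condition absorbed into the weak Laplacian) yields $\langle\Delta w(s),w(s)\rangle=-\|\nabla w(s)\|_\lzlambda^2\leq 0$, so this term may be dropped. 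Using the Lipschitz bound $H_2$ in the form $\|g(u_1(s))-g(u_2(s))\|_\lzlambda^2\leq L^2\|w(s)\|_\lzlambda^2$ and taking expectations so that the stochastic integral (a mean-zero martingale) disappears, I would arrive at
\begin{align*}
\erww{\|w(t)\|_\lzlambda^2}\leq\erww{\|u_0^1-u_0^2\|_\lzlambda^2}+L^2\int_0^t\erww{\|w(s)\|_\lzlambda^2}\,ds.
\end{align*}
Since $t\mapsto\erww{\|w(t)\|_\lzlambda^2}$ is finite by the regularity of the solutions, Gronwall's lemma then yields $\erww{\|w(t)\|_\lzlambda^2}\leq e^{L^2t}\erww{\|u_0^1-u_0^2\|_\lzlambda^2}$, which is the claim with $C:=e^{L^2T}$.

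I expect the only genuine difficulty to be the rigorous justification of the It\^o expansion and, within it, the vanishing of the cross term under expectation. The stated regularity only provides $\erww{\sup_{t}\|w(t)\|_\lzlambda^2}<\infty$, which makes the stochastic integral a priori a local martingale rather than a true martingale; the standard remedy is a localization argument. Concretely, I would introduce stopping times $\tau_R:=\inf\{t\in[0,T]:\|w(t)\|_\lzlambda>R\}$, run the It\^o formula up to $t\wedge\tau_R$ so that the stopped stochastic integral is a genuine mean-zero martingale, obtain the Gronwall estimate on $[0,t\wedge\tau_R]$ uniformly in $R$, and finally let $R\to+\infty$ using $\tau_R\to T$ $\mathds{P}$-a.s.\ (by path continuity) together with Fatou's lemma. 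Once this technical step is secured, the dissipativity of the Laplacian, the Lipschitz control of $g$, and Gronwall's inequality combine in a completely routine fashion.
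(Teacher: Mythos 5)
Your proposal is correct and follows essentially the same route as the paper: the paper's proof applies the It\^{o} formula from the variational theory (citing \cite[Theorem 4.2.5]{LR}) to $u_1-u_2$, discards the nonnegative gradient term, takes expectation and concludes by Gronwall's inequality. Your additional localization argument for the stochastic integral is a standard technical justification already subsumed in the cited It\^{o} formula, so nothing is missing or genuinely different.
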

\begin{proof}
We apply the It\^{o} formula (\cite[Theorem 4.2.5]{LR}) to the process $u_1-u_2$, discard the nonnegative term on the left-hand side of the resulting equation and take expectation. Then, the assertion is a straightforward consequence of Gronwall's inequality, see \cite[Proposition 2.4.10]{LR}.
\end{proof}
\begin{remark}\label{210902_r1}
If $u_1$, $u_2$ are solutions to \eqref{equation} on $(\Omega,\mathcal{A},\mathds{P},(\mathcal{F}_t)_{t\geq 0},(W(t))_{t\geq 0})$ with respect to the same initial value $u_0$, from Proposition \ref{210902_p1} it follows that $u_1(t)=u_2(t)$ in $L^2(\Lambda)$ for all $t\in [0,T]$, $\mathds{P}$-a.s. in $\Omega$. Since $u_1$, $u_2$ have continuous paths in $L^2(\Lambda)$, the exceptional set in $\Omega$ may be chosen independently of $t\in [0,T]$ and it follows that $u_1=u_2$ in $L^2(0,T;L^2(\Lambda))$ $\mathds{P}$-a.s. in $\Omega$.
\end{remark}

\begin{prop}\label{finalProp}
Let $({u}_{h,N}^l)_m$ be given by Proposition \ref{210609_prop1}. Then, there exists a subsequence of $(u^l_{h,N})_m$, still denoted by $(u^l_{h,N})_m$, converging for $m\rightarrow+\infty$ in $L^p(\Omega;L^2(0,T;L^2(\Lambda)))$ for any $p\in[1,2)$ towards the stochastic process $u$ with values in $L^2(\Lambda)$ introduced in Lemma \ref{addreg u}. Moreover, $u$ has $\mathds{P}$-a.s. continuous paths and belongs to $ L^2(\Omega;L^2(0,T;H^1(\Lambda))$.
\end{prop}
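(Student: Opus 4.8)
The plan is to apply the Gy\"{o}ngy--Krylov characterization of convergence in probability (\cite[Lemma 1.1]{Krylov}): a sequence of $L^2(0,T;\lzlambda)$-valued random variables converges in probability if and only if, from every pair of subsequences, one can extract a further joint subsequence whose law on $L^2(0,T;\lzlambda)\times L^2(0,T;\lzlambda)$ converges weakly to a probability measure concentrated on the diagonal $\{(x,y):x=y\}$. The whole argument thus reduces to producing, for arbitrary subsequences, a common Skorokhod limit on which pathwise uniqueness can be invoked.

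First I would fix two arbitrary subsequences of $(\uhnl)_m$, indexed by $\nu_k$ and $\rho_k$, and form the joint random vector
\begin{align*}
Y_k:=\big(u_{h_{\nu_k},N_{\nu_k}}^l,\ u_{h_{\rho_k},N_{\rho_k}}^l,\ W,\ u_{h_{\nu_k}}^0,\ u_{h_{\rho_k}}^0\big),
\end{align*}
augmented, as in Lemma \ref{convginlaw}, by the two right-minus-left difference components, which tend to $0$ in $L^2(\Omega;L^2(0,T;\lzlambda))$ by \eqref{210824_05}. Each marginal is tight: tightness of the two $u^l$-components follows exactly as in Lemma \ref{uhnltight} from the uniform bound of Lemma \ref{stochcomp}; the law of $W$ is constant; and the discretized initial data converge $\mathds{P}$-a.s.\ in $L^2(\Lambda)$. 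Hence $(Y_k)_k$ is tight and, by Prokhorov's theorem, a not relabeled subsequence converges in law.

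Next I would invoke the Skorokhod representation theorem to obtain, on a common new probability space, almost surely convergent copies of all components. The crucial structural points are that both approximation families are driven by the \emph{single} Brownian-motion copy and that both initial-data copies share the \emph{same} limit, since $u_{h_{\nu_k}}^0$ and $u_{h_{\rho_k}}^0$ have the common $\mathds{P}$-a.s.\ limit $u_0$. Repeating the identification machinery of Subsections \ref{S1}--\ref{S3} (Lemmas \ref{lem:id_vm}--\ref{martingaleIdentification} and Proposition \ref{210823_p1}) for each of the two limits, I obtain two variational solutions $u_\infty^{(1)}$ and $u_\infty^{(2)}$ of Problem \eqref{equation}, driven by the same Brownian motion and with the same initial datum, on one and the same stochastic basis. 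Pathwise uniqueness in the form of Remark \ref{210902_r1} then forces $u_\infty^{(1)}=u_\infty^{(2)}$ $\mathds{P}'$-a.s.\ in $L^2(0,T;\lzlambda)$, so the limit of the joint laws is supported on the diagonal.

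Consequently the Gy\"{o}ngy--Krylov lemma yields convergence of $(\uhnl)_m$ in probability in $L^2(0,T;\lzlambda)$ on the original space; its limit coincides with the weak limit $u$ of Lemma \ref{addreg u}, since convergence in probability implies weak convergence and weak limits are unique. The uniform $L^2(\Omega;L^2(0,T;\lzlambda))$ bound of Lemma \ref{210611_lem01} provides uniform integrability, so Vitali's theorem upgrades the convergence to $L^p(\Omega;L^2(0,T;\lzlambda))$ for every $p\in[1,2)$. With convergence in probability of $(\uhnl)_m$ and $(\uhnr)_m$ at hand, together with the convergence of $(g(\uhnl))_m$ from the Lipschitz continuity of $g$, one can pass to the limit directly in the original scheme and its stochastic integral on $(\Omega,\mathcal{A},\mathds{P},(\mathcal{F}_t)_{t\geq 0},W)$ (see Lemma \ref{220315}), identifying $u$ as a variational solution; the regularity $u\in L^2(\Omega;L^2(0,T;H^1(\Lambda)))$ and the $\mathds{P}$-a.s.\ continuity of its paths then carry over from Lemma \ref{convergenceaffine} and Proposition \ref{210823_p1}. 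The main obstacle is precisely this joint identification step: one must build a single filtration adapted simultaneously to both approximations and to the common Brownian motion, verify that the latter is still Brownian with respect to it and that $u_\infty^{(1)},u_\infty^{(2)}$ are admissible It\^{o} integrands, so that the estimate of Proposition \ref{210902_p1} can be applied to their difference with coinciding initial data.
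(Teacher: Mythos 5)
Your proposal follows essentially the same route as the paper's proof: the Gy\"{o}ngy--Krylov criterion applied to a joint subsequence obtained via Prokhorov and Skorokhod, identification of both Skorokhod limits as solutions of \eqref{equation} on a common stochastic basis with the same Brownian motion and initial datum, pathwise uniqueness (Proposition \ref{210902_p1}, Remark \ref{210902_r1}) to concentrate the joint limit law on the diagonal, and Vitali's theorem to upgrade convergence in probability to convergence in $L^p(\Omega;L^2(0,T;L^2(\Lambda)))$. The only cosmetic difference is that the paper forces the two initial-data limits to coincide by including the component $u^0_{\nu_k}-u^0_{\rho_k}$ (whose law tends to $\delta_0$) in the joint random vector, whereas you argue from the common $\mathds{P}$-a.s.\ limit $u_0$ of the pair; both devices yield the same conclusion.
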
 

\begin{proof}
For the sake of simplicity, we will write $u^l_m:=\uhnlm$ and $u^r_m:=\uhnrm$ in the following. We consider an arbitrary pair of subsequences $(u^l_{\nu})_{\nu}$, $(u^l_{\rho})_{\rho}$ of $(u^l_m)_m$.
Our aim is to apply \cite[Lemma 1.1]{Krylov}, therefore we show that there exists a joint subsequence $(u^l_{\nu_k},u^l_{\rho_k})_k$ converging in law to a probability measure $\eta$ on $L^2(0,T;\lzlambda)^2$ such that 
\begin{align*}
\eta(\{(x,y)\in L^2(0,T;\lzlambda)^2 \ | \ x=y\})=1.
\end{align*}
We define the random vector-valued sequence $(Y_{\nu,\rho})_{\nu,\rho}$ by 
\[Y_{\nu,\rho}:=(u_{\nu}^l,u_{\rho}^l, (u_{\nu}^r-u_{\nu}^l),(u_{\rho}^r-u_{\rho}^l), W, u_{\nu}^0,u_{\rho}^0,(u_{\nu}^0-u_{\rho}^0))\]
for any $\nu,\rho\in\na^\ast$ and extract a joint subsequence
\[Y_k:=(u_{\nu_k}^l,u_{\rho_k}^l, (u_{\nu_k}^r-u_{\nu_k}^l),(u_{\rho_k}^r-u_{\rho_k}^l), W, u_{\nu_k}^0,u_{\rho_k}^0,(u_{\nu_k}^0-u_{\rho_k}^0))\]
for any $k\in\na$ that converges in law towards a probability measure $\eta_{\infty}$ with marginals $\eta_{\infty}^1$,$\eta_{\infty}^2$, $\delta_0$, $\delta_0$, $\mathds{P}\circ W^{-1}$, $\mathds{P}\circ (u_0)^{-1}$, $\mathds{P}\circ (u_0)^{-1}$, $\delta_0$,
where we include the difference of the random initial data into the vector to ensure that $u_{\nu^k}^0$ and $u_{\rho^k}^0$ converge to the same limit.
With straightforward modifications of the arguments of Subsections \ref{S1}-\ref{S3}, we can find 
random elements $u_{\infty}^1,u_{\infty}^2,v_0$,
\[Y'_k=(v_{\nu_k}^l,v_{\rho_k}^l, z_{\nu_k},z_{\rho_k}, W_k, v_{\nu_k}^0,v_{\rho_k}^0,(v_{\nu_k}^0-v_{\rho_k}^0))\]
such that
\begin{align*}
&\mathds{P}'\circ(Y_k')^{-1}=\mathds{P}\circ(Y_k)^{-1} \ \text{for all} \ k\in\na,\\
&\mathds{P}'\circ(v_0)^{-1}=\mathds{P}\circ(u_0)^{-1},\\
&\mathds{P}'\circ (u^1_{\infty})^{-1}=\eta^1_{\infty},\\
&\mathds{P}'\circ (u^2_{\infty})^{-1}=\eta^2_{\infty}
\end{align*}
and a joint stochastic basis $(\Omega',\mathcal{A}',\mathds{P}', (\mathfrak{F}^{\infty}_t)_{t\in [0,T]}, (W_{\infty}(t))_{t\in [0,T]})$ such that $u^1_{\infty}$ and $u^2_{\infty}$ are both solutions of \eqref{equation} with initial value $v_0$ on $(\Omega',\mathcal{A}',\mathds{P}', (\mathfrak{F}^{\infty}_t)_{t\in [0,T]}, (W_{\infty}(t))_{t\in [0,T]})$. Thus, from Proposition \ref{210902_p1} and Remark \ref{210902_r1} it follows for $\eta=(\eta_{\infty}^1,\eta_{\infty}^2)$
\begin{align*}
1=\mathds{P}'(\{u_{\infty}^1=u_{\infty}^2\})
&=\mathds{P}'\circ(u_{\infty}^1,u_{\infty}^2)^{-1}(\{(x,y)\in L^2(0,T;\lzlambda)^2 \ | \ x=y\})\\
&=\eta(\{(x,y)\in L^2(0,T;\lzlambda)^2 \ | \ x=y\}).
\end{align*}
Then from \cite{Krylov} (Lemma 1.1) we get the convergence of $(u^l_m)_m$ in probability to a random element $\tilde{u}$ in $L^2(0,T, L^2(\Lambda))$. Obviously by Lemma 4.1, $u=\tilde{u}$. This convergence in probability allows us to extract a not relabeled subsequence of $(u^l_m)_m$ that converges $\mathbb{P}$ a.s in $L^2(0,T;L^2(\Lambda))$ towards $u$. Combining this with the boundedness results on $(u^l_m)_m$ in $L^2(\Omega;L^2(0,T;L^2(\Lambda)))$ given by Lemma \ref{210611_lem01}, the application of Vitali's theorem leads us to the announced strong convergence result in $L^p(\Omega;L^2(0,T;L^2(\Lambda)))$, for any $1\leq p <2$.
\end{proof}

At last, to conclude the proof of Theorem \ref{mainresult} which states the convergence of the finite-volume scheme defined by \eqref{eq:def_u0}-\eqref{equationapprox}  towards the variational solution  of the multiplicative stochastic heat equation studied in this paper, it remains to show that the obtained limit $u$ is solution of Problem \eqref{equation} in the sense of Definition \ref{solution}. This is the aim of the following last lemma:

\begin{lem}\label{220315}
The stochastic process $u$ introduced in Lemma \ref{addreg u} is the unique solution of Problem \eqref{equation} in the sense of Definition \ref{solution}. 
\end{lem}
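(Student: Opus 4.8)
The plan is to pass to the limit directly in the discrete scheme on the \emph{original} stochastic basis $(\Omega,\mathcal{A},\mathds{P},(\mathcal{F}_t)_{t\geq 0},(W(t))_{t\geq 0})$, exploiting the strong convergence now available from Proposition \ref{finalProp}. Indeed, along a (not relabeled) subsequence $(\uhnl)_m$ converges to $u$ in $L^p(\Omega;L^2(0,T;\lzlambda))$ for $p\in[1,2)$ and, along a further subsequence, $\mathds{P}$-a.s.\ in $L^2(0,T;\lzlambda)$, with $u\in L^2(\Omega;L^2(0,T;H^1(\Lambda)))$ and $\mathds{P}$-a.s.\ continuous paths. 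Since $\erww{\sum_{n=0}^{N-1}\|u_h^{n+1}-u_h^n\|_{\lzlambda}^2}\leq C_1$ by Proposition \ref{bounds}, the computation in \eqref{211008_03} (carried out now on $\Omega$) shows that $\uhnr-\uhnl\to 0$ and $\widehat{u}_{h,N}-\uhnl\to 0$ in $L^2(\Omega;L^2(0,T;\lzlambda))$, so $\uhnr$ and the affine interpolant $\widehat{u}_{h,N}$ from \eqref{eq:notation_whhat} also converge to $u$, while $\nabla^h\uhnr\rightharpoonup\nabla u$ weakly as in Lemma \ref{addreg u}. The whole argument will be the analogue of Proposition \ref{210823_p1}, with $W$, $u_0$ and $(\mathcal{F}_t)_{t\geq 0}$ in place of $W_\infty$, $v_0$ and $(\mathfrak{F}_t^\infty)_{t\in[0,T]}$.

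The first genuinely new point is the treatment of the stochastic integral, which here is \emph{direct} and requires neither the Skorokhod representation nor a martingale identification, precisely because the driving Brownian motion $W$ is fixed. First I would verify that $u$ is $(\mathcal{F}_t)_{t\geq 0}$-adapted: each $\uhnl$ is adapted by Proposition \ref{210609_prop1} (with $\uhnl(t)=u_h^n$ being $\mathcal{F}_{t_n}\subseteq\mathcal{F}_t$-measurable on $[t_n,t_{n+1})$), and the $\mathds{P}$-a.s.\ convergence in $L^2(0,T;\lzlambda)$ gives $\mathcal{F}_t$-measurability of $u(t)$ for a.e.\ $t$, which extends to every $t$ by path-continuity; hence $g(u)$ is an admissible integrand. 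Recalling $M_{h,N}$ from \eqref{eq:defM}, the Lipschitz bound $H_2$ together with $\uhnl\to u$ in $L^2(\Omega;L^2(0,T;\lzlambda))$ yields $g(\uhnl)\to g(u)$ in that space, and the It\^o isometry then gives $M_{h,N}\to\int_0^\cdot g(u)\,dW$, as well as the convergence of its piecewise-affine interpolant $\widehat{M}_{h,N}$ (defined via \eqref{eq:notation_whhat}) to the same limit, the interpolation error being controlled exactly as in Lemma \ref{convergenceaffine}.

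With these convergences in hand I would repeat the limit passage of Proposition \ref{210823_p1}: multiply \eqref{equationapprox} by $\mathds{1}_A\xi(t)\varphi(x_K)$ for $A\in\mathcal{A}$, $\xi\in\mathcal{D}(\re)$ with $\xi(T)=0$, and $\varphi\in\mathcal{D}(\re^2)$ with $\nabla\varphi\cdot\mathbf{n}=0$, then sum over $K$, integrate in time, sum over $n$, and take expectation. The time-derivative and initial-data terms converge by the strong convergence of $\widehat{u}_{h,N}-\widehat{M}_{h,N}$ and of $u_h^0\to u_0$; the diffusion term converges using $\nabla^h\uhnr\rightharpoonup\nabla u$ and the $O(h)$ flux-consistency estimate employed in Proposition \ref{210823_p1}; and the noise term converges by the previous paragraph. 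This produces the variational identity, whence, by the same density argument via \cite[Theorem 1.1]{Droniou} and the regularity manipulations via \cite[Lemma 7.3]{Roubicek} and \cite[Proposition A6]{Brezis}, one upgrades to
\[
u(t)-u_0-\int_0^t\Delta u(s)\,ds=\int_0^t g(u(s))\,dW(s)
\]
in $\lzlambda$ for all $t\in[0,T]$ and $\mathds{P}$-a.s., obtaining in addition $u\in L^2(\Omega;C([0,T];\lzlambda))$. Thus $u$ is a variational solution in the sense of Definition \ref{solution}, and its uniqueness is immediate from the pathwise uniqueness of Proposition \ref{210902_p1} together with Remark \ref{210902_r1}.

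The main obstacle is the careful justification of the adaptedness of the limit $u$ and the consequent well-posedness and convergence of the It\^o integral with respect to the fixed basis; once this is settled, the remainder is a transcription of the already completed computation of Proposition \ref{210823_p1}, so no essentially new estimate is required.
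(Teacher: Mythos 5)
Your proposal is correct and follows essentially the same route as the paper: strong convergence of $\uhnr$, $\widehat{u}_{h,N}$, $M_{h,N}$ and $\widehat{M}_{h,N}$ on the original basis (obtained from Proposition \ref{finalProp} and the It\^o isometry, since $W$ is now fixed), followed by a transcription of the limit passage of Proposition \ref{210823_p1} from $\Omega'$ to $\Omega$ and an appeal to pathwise uniqueness. The only addition is your explicit verification of the $(\mathcal{F}_t)_{t\geq 0}$-adaptedness of $u$, which the paper leaves implicit but which is a worthwhile detail to record.
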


\begin{proof}
Let $p\in[1,2)$. With similar arguments as in the proof of in Proposition \ref{finalProp} it follows that $u_{h,N}^r$ and $\widehat{u}_{h,N}$ converge to $u$ in
$L^p(\Omega;L^2(0,T;L^2(\Lambda)))$
for $m\rightarrow\infty$. Moreover there holds $g(u_{h,N}^l)\overset{m\rightarrow\infty}{\longrightarrow}g(u)$ in
$L^p(\Omega;L^2(0,T;L^2(\Lambda)))$.
Therefore
\begin{align*}
M_{h,N}=\int_0^\cdot g(\uhnl)\,dW\overset{m\rightarrow\infty}\longrightarrow\int_0^\cdot g(u)\,dW\quad\text{in }L^p(\Omega;C([0,T];L^2(\Lambda))).
\end{align*}
As shown in Lemma \ref{convergenceaffine} it follows $\widehat{M}_{h,N}\overset{m\rightarrow\infty}\longrightarrow\int_0^\cdot g(u)\,dW$ in $L^p(\Omega;L^2(0,T;L^2(\Lambda)))$. Now we consider the semi-implicit finite-volume scheme \eqref{equationapprox}, multiply it with $\mathds{1}_A\xi\varphi$, where $A\in\mathcal{A}$, $\xi\in\mathcal{D}(\re)$ with $\xi(T)=0$ and $\varphi\in\mathcal{D}(\re^2)$ with $\nabla \varphi\cdot\mathbf{n}=0$, then we sum over $K\in\Tau$, integrate over $[t_n,t_{n+1})$ and sum over $n=0,\dots,N-1$ to get
\begin{align*}
0=&\mathbb{E}\left[\sum_{n=0}^{N-1}\int_{t_n}^{t_{n+1}}\sum_{K\in\Tau}m_K\mathds{1}_A\xi(t)\frac{1}{\Delta t}[u_K^{n+1}-u_K^n-g(u_K^n)\Delta_{n+1} W]\varphi(x_K)\,dt\right]\\
&+\mathbb{E}\left[\sum_{n=0}^{N-1}\int_{t_n}^{t_{n+1}}\sum_{K\in\Tau}\mathds{1}_A\xi(t)\sum_{K\in\Tau}\sum_{\sigma\in\edgesint\cap\edges_K}\frac{m_\sigma}{\dkl}(u_K^{n+1}-u_L^{n+1})\varphi(x_K)\,dt\right]\\
=:&T_{1,m}+T_{2,m}.
\end{align*}
If we define $\varphi_h(x):=\varphi(x_K)$ for $x\in K$, $K\in\Tau$, there holds
\begin{align*}
T_{1,m}&=\mathbb{E}\left[\mathds{1}_A\int_0^T\int_\Lambda\partial_t[\widehat{u}_{h,N}-\widehat{M}_{h,N}](t,x)\xi(t)\varphi_h(x)\,dx\,dt\right]\\
&=-\mathbb{E}\left[\mathds{1}_A\int_0^T\int_\Lambda(\widehat{u}_{h,N}-\widehat{M}_{h,N})(t,x)\xi'(t)\varphi_h(x)\,dx\,dt\right]-\mathbb{E}\left[\mathds{1}_A\int_\Lambda u_h^0(x)\xi(0)\varphi_h(x)\,dx\right].
\end{align*}
From \cite[Proposition 3.5]{Andreianov} we know that $u_h^0\overset{m\rightarrow+\infty}\longrightarrow u_0$ in $L^2(\Lambda)$, $\mathds{P}$-a.s. in $\Omega$, and thanks to Lemma~\ref{bound_u0} we can apply Lebesgue's dominated convergence theorem. The passage to the limit is analogous to that on $\Omega'$.
\end{proof}
\quad\\
\textbf{Acknowledgments} This work has been supported by the SIMALIN project (ANR-19-CE40-0016) of the French National Research Agency, by the German Research Foundation project ZI 1542/3-1 and the Procope programme for Project-Related Personal Exchange (France-Germany).

\nocite{*}

\bibliographystyle{plain}
\bibliography{FV_Bauzet_Nabet_Schmitz_Zimmermann_220923.bib}

\end{document}